\newcommand{\avint}{\int \hspace{-1.0em}-\,}
\newcommand{\omeganu}{\omega_{\nu}}
\newcommand{\xinu}{\xi_{\nu}}
\newcommand{\unu}{u_{\nu}}
\newcommandx{\improvement}[2][1=]{\todo[linecolor=Plum,backgroundcolor=Plum!25,bordercolor=Plum,#1]{#2}}
\newcommandx{\priority}[2][1=]{\todo[linecolor=Plum,backgroundcolor=Plum!55,bordercolor=Plum,#1]{#2}}
\definecolor{darkgreen}{rgb}{0,0.55,0}
\newcommand{\xin}{\xi_{\nu}}
\newcommand{\vn}{v_{\nu}}
\newcommand{\un}{\unu}
\newcommand{\fn}{f_{\nu}}
\newcommand{\grad}{\nabla}
\newcommand{\laplace}{\Delta}
\renewcommand{\div}{\grad\cdot}
\newcommand{\N}{\mathbbm{N}}
\newcommand{\R}{\mathbbm{R}}
\renewcommand{\H}{\mathbbm{H}}
\newcommand{\Z}{\mathbbm{Z}}
\def\loc{{\mathrm{loc}}}
\def\weak{{\mathrm{weak}}}
\newcommand{\on}{\omega_{\nu}}
\newcommand{\eps}{\varepsilon}
\def\Xint#1{\mathchoice
{\XXint\displaystyle\textstyle{#1}}%
{\XXint\textstyle\scriptstyle{#1}}%
{\XXint\scriptstyle\scriptscriptstyle{#1}}%
{\XXint\scriptscriptstyle\scriptscriptstyle{#1}}%
\!\int}
\def\XXint#1#2#3{{\setbox0=\hbox{$#1{#2#3}{\int}$ }
\vcenter{\hbox{$#2#3$ }}\kern-.59\wd0}}
\def\avint{\Xint-}
\newtheorem{theorem}{Theorem}
\newtheorem{lemma}{Lemma}
\newtheorem{remark}{Remark}
\newtheorem{definition}{Definition}
\newcommand{\unuk}{u_{\nu_k}}
\begin{document}

\phantom{ }
\vspace{4em}

\begin{flushleft}
{\large \bf Renormalization and energy conservation for axisymmetric fluid flows}\\[2em]
{\normalsize \bf Camilla Nobili$^1$ and Christian Seis$^2$}\\[0.5em]
\small \begin{tabbing} 
$^1$ \= Fachbereich Mathematik, Universit\"at Hamburg, Germany.\\
\>E-mail: camilla.nobili@uni-hamburg.de\\
$^2$ \> Institut f\"ur Analysis und Numerik,  Westf\"alische Wilhelms-Universit\"at M\"unster, Germany.\\
\>E-mail: seis@wwu.de\\[1em]
\end{tabbing}
{\bf Abstract:} We study vanishing viscosity solutions to the axisymmetric Euler equations with (relative) vorticity in $L^p$ with $p>1$. We show that these solutions satisfy the corresponding vorticity equations in the sense of renormalized solutions. Moreover, we show that the kinetic energy is preserved provided that $p>3/2$ and  the vorticity is nonnegative and has finite second moments.

\end{flushleft}

\tableofcontents

\vspace{2em}

\section{Introduction}
For axisymmetric incompressible flows without swirl, the (originally three-di\-men\-sion\-al) Navier--Stokes and Euler equations can be reduced to two-dimensional mathematical models which are obtained by assuming a cylindrical symmetry for both the physical space variables and the velocity components. Despite this simplification, such flows are still able to describe interesting physical phenomena like the motion and interaction of toroidal vortex rings. On the mathematical level, even though two-dimensional, the (vaguely defined) degree of difficulty of analyzing solution properties lies somewhere between that of the two-dimensional planar equations and the full three-dimensional model. Indeed, as we shall see later on, axisymmetric flows\footnote{From here on we shall omit the specification \emph{without swirl} for convenience.} do still feature vortex stretching and some of the standard global estimates have an unambiguous three-dimensional character. On the other hand, many of the features of the Biot--Savart kernel are typically two-dimensional even though some helpful symmetry properties are lost. 

In the present work, our aim is to study renormalization and energy conservation of solutions to the Euler equations that are obtained as  vanishing viscosity solutions from the axisymmetric Navier--Stokes equations. Here, renormalization is to be understood in the sense of DiPerna and Lions \cite{DiPernaLions89}, that is, a solution is called renormalized if the chain rule of differentiation applies in a suitable way. 
We are particularly interested into solutions whose vorticity is merely $L^p$ integrable in a sense that will be made precise later.

The analogous  (though in some parts technically much simpler) studies for the two-dimensional planar equations have been conducted quite recently: As long as the vorticity is $L^p$ integrable with exponent $p\ge 2$, DiPerna and Lions's theory for transport equations (combined with Calder\'on--Zygmund theory) ensures that the vorticity is a renormalized solution of the corresponding vorticity equation \cite{FilhoMazzucatoNussenzveig06}. This fact is true regardless of the construction of the solution. If $p\in (1,2)$, renormalization properties are proved in \cite{CrippaSpirito15} for vanishing viscosity solutions. The argument in this work relies on a duality argument and exploits the DiPerna--Lions theory. This theory, however, does not apply to the $p=1$ case, in which the associated velocity gradient is a singular integral of an $L^1$ function. Instead, a stability-based theory for continuity equations proposed in \cite{Seis17,Seis18} can be suitably generalized in order to handle this  situation and  to extend the results from  \cite{FilhoMazzucatoNussenzveig06,CrippaSpirito15} to the limiting case $p=1$; see \cite{CrippaNobiliSeisSpirito17}.

Conservation of kinetic energy for vanishing viscosity solutions with $L^p$ vorticity, $p>1$, is established in \cite{CheskidovFilhoLopesShvydkoy16} for the planar two-dimensional setting (on the torus). The corresponding three-dimensional problem  gained much attention in recent years, particularly in connection with Onsager's conjecture \cite{Onsager49}, which states that the threshold H\"older regularity for the validity of energy conservation is the exponent $1/3$. Energy conservation for larger H\"older exponents was proved in 
\cite{ConstantinETiti94}, see also \cite{Eyink94} for partial results and \cite{CheskidovConstantinFriedlanderShvydkoy08} for improvements, while the sharpness of this exponent was proved in \cite{Isett18}, building up on the theory developed in \cite{DeLellisSzekelyhidi09,DeLellisSzekelyhidi13,BuckmasterDeLellisIsettSzekelyhidi15,Buckmaster15,BuckmasterDeLellisSzekelyhidi16}. 
We note that H\"older-$1/3$ regularity is guaranteed for any vorticity field in $L^p$ with $p \ge \frac92$ thanks to three-dimensional Sobolev embeddings.

Before discussing our precise findings and the relevant earlier results  for the axisymmetric  equations, we shall introduce the mathematical model. The Euler equations for an  ideal fluid in $\R^3$ are given by the system
\begin{align}
\partial_t u + u\cdot \grad_x u + \grad_x p & =0,\label{Euler1}\\
\grad_x\cdot u &=0,\label{Euler2}
\end{align}
where $u = u(t,x)\in \R^3$ is the fluid velocity and $p=p(t,x)\in \R$ is the pressure. In this formulation, the (constant) fluid density is set to $1$. Whenever the fluid has locally finite kinetic energy, which will be the case in the regularity framework considered in this paper, the Euler equations can be interpreted in the sense of distributions. 

\begin{definition}\label{def:Euler}
Let $T>0$ and $u_0\in L^2_{\loc}(\R^3)^3$ be given. A vector field $u\in L^2_{\loc}((0,T)\times\R^3)^3$ is called a distributional solution to the Euler equations \eqref{Euler1}, \eqref{Euler2} if 
\[
\int_0^T \int_{\R^3} \left(\partial_t F\cdot u +\grad_x F:u\otimes u\right)\, dxdt + \int_{\R^3} F(t=0)\cdot u_0\, dx = 0
\]
for any divergence-free vector field $F\in C_c^{\infty}([0,T)\times \R^3)^3$ and 
\[
\int_0^T \int_{\R^3} \grad_x f\cdot u\, dxdt = 0
\]
for any   $f\in C_c^{\infty}([0,T)\times \R^3)$. 
\end{definition}

We  restrict ourself to the case of axisymmetric solutions without swirl. That is,  if $(r,\theta,z)$ are the cylindrical coordinates of a point $x\in \R^3$, i.e., $x= (r\cos\theta,r\sin\theta,z)^T$, we shall assume that 
\[
u = u(t,r,z),\quad\mbox{and}\quad  u = u^re_r + u^ze_z,
\]
where $e_r$ and $e_z$ are the unit vectors in radial and vertical directions, which form together with the angular unit vector $e_{\theta}$ a basis of $\R^3$, 
\[
e_r=(\cos\theta,\sin\theta,0)^T,\quad e_{\theta}=(-\sin\theta,\cos\theta,0)^T,\quad e_z=(0,0,1)^T.
\]
We remark that $u^\theta = u\cdot e_{\theta}$ is  the swirl direction of the flow, that we assume to vanish identically.
Under these hypotheses on the velocity field, the vorticity vector is unidirectional, $\grad_x\times u = (\partial_z u^r - \partial_r u^z)e_{\theta}$, and we write $\omega = \partial_z u^r - \partial_r u^z$. A direct computation reveals that this quantity, that we will call vorticity from here on, satisfies the continuity equation
\begin{equation}\label{omega}
\partial_t \omega + \partial_r (u^r \omega) + \partial_z(u^z\omega) = 0
\end{equation}
on the half-space $\H = \left\{(r,z)\in \R^2: r>0\right\}$. We remark that $\omega$ is thus a conserved quantity, because the no-penetration boundary condition $u^r=0$ on $\partial \H$ comes along with the symmetry assumptions. However, opposed to the situation for the two-dimensional planar Euler equations, the vorticity is not transported by the flow, as the divergence-free condition \eqref{Euler2} becomes 
\begin{equation}\label{div}
r^{-1}\partial_r(ru^r)+\partial_zu^z=0
\end{equation}
in cylindrical coordinates. Indeed, the continuity equation can be rewritten as a damped transport equation,
\[
\partial_t \omega + u^r\partial_r \omega + u^z\partial_z \omega = \frac1r u^r \omega,
\]
where the   damping term on the right-hand side describes the phenomenon of vortex stretching, $\frac1r u^r \omega e_{\theta}= (\grad_x\times u)\cdot \grad_x u$. What is transported instead is the relative vorticity $\xi = \omega/r$,  
\begin{equation}\label{xi}
\partial_t \xi + u^r\partial_r \xi + u^z\partial_z \xi = 0.
\end{equation}

We remark that the flow is entirely determined by the (relative) vorticity, as the associated velocity field can be reconstructed with the help of the Biot--Savart law in $\R^3$,
\begin{equation}\label{BS}
u(t,x) = \frac1{4\pi} \int_{\R^3} \frac{x-y}{|x-y|^3}\times e_{\theta}(y) \omega(t,y)\, dy.
\end{equation}
A  transformation into cylindrical coordinates and an analysis of the axisymmetric Biot--Savart law can be found, for instance,  in \cite{GallaySverak15}.
%
%
 
 Thanks to this relation, we may thus study \eqref{xi}, \eqref{BS} instead of  \eqref{Euler1}, \eqref{Euler2}. Working with the vorticity formulation has certain advantages: At least on a formal level, it is readily seen that the vorticity equation \eqref{xi} preserves any $L^p$ norm,
  \begin{equation}\label{vorticity}
 \|\xi(t)\|_{L^p(\R^3)} = \|\xi_0\|_{L^p(\R^3)}\quad \forall t \ge 0,
 \end{equation}
 if $\xi_0$ is the initial relative vorticity.\footnote{
We caution the reader that throughout the manuscript, we carefully distinguish between the Lebesgue spaces on the full three-dimensional space, $L^p(\R^3)$, and those on the two-dimensional half-space $L^p(\H)$.  Notice also that    the three-dimensional Lebesgue measure reduces to the weighted measure $ 2\pi r d(r,z)$ on  $\H$ when restricted to axisymmetric configurations as in \eqref{vorticity}. In particular, $\|\xi\|_{L^1(\R^3)} = 2\pi \|\omega\|_{L^1(\H)}$.}
 This observation is crucial, for instance, in order to prove uniqueness in the case of bounded vorticity fields \cite{Danchin07}. 
 The drawback of working with \eqref{xi} is that there is no direct way of giving a meaning to the transport term in low integrability settings (opposed to the momentum equation \eqref{Euler1}).  For instance, it is not obvious to us, how to extend  common symmetrization techniques that allow for an alternative formulation of the transport nonlinearity in the planar two-dimensional setting, see, e.g., \cite{Delort91,VecchiWu93,BohunBouchutCrippa16}.  
   
 Whenever the product $u\xi$ is locally integrable, we can interpret the transport equation  \eqref{xi} in the sense of distributions.
 
 \begin{definition}\label{D1}
 Let $T>0$ and $p,q\in (1,\infty)$ be given with $\frac1p+\frac1q =1$. Let $\xi_0\in L^p_{\loc}(\H)$   and $u\in L^1((0,T);L^q_{\loc}(\H)^3)$ be such that $r^{-1}\partial_r(ru^r) +\partial_zu^z=0$. Then $\xi\in L^{\infty}((0,T);L^p_{\loc}(\H))$ is called a \emph{distributional solution} to the transport equation \eqref{xi} with initial datum $\xi_0$ if
 \[
 \int_0^T\int_{\H} \xi \left(\partial_t f + u^r\partial_rf+ u^z\partial_z f\right) r\, d(r,z)dt + \int_{\H} \xi_0 f(t=0)r\, d(r,z) = 0
 \]
 for any $f \in C_c^{\infty}([0,T)\times \H)$. 
 \end{definition}
 Notice that the definition  provides a distributional formulation of the continuity equation \eqref{omega} in which $\omega$ is replaced by $r\xi$. 
 
Simple scaling arguments show that the local integrability of the product $u\xi$ can be expected to hold true only if $p\ge 4/3$. For this insight, it is crucial to observe that the  Sobolev inequality
\begin{equation}\label{embedding}
\|u\|_{L^{\frac{2p}{2-p}}(\H)}\lesssim \|\omega\|_{L^p(\H)}
\end{equation}
is valid  as in the planar two-dimensional setting, cf.~\cite[Proposition 2.3]{GallaySverak15}. For vorticity fields with smaller integrability exponents, we propose the notion of renormalized solutions.
 
  \begin{definition}\label{D2}
 Let $T>0$ be given. Let $\xi_0\in L^1_{\loc}(\H)$   and $u\in L^1((0,T);L^1_{\loc}(\H)^3)$ be such that $r^{-1}\partial_r(ru^r) +\partial_zu^z=0$. Then $\xi\in L^{\infty}((0,T);L^1_{\loc}(\H))$ is called a \emph{renormalized solution} to the transport equation \eqref{xi} with initial datum $\xi_0$ if
 \[
 \int_0^T\int_{\H} \beta(\xi)\left(\partial_t f + u^r\partial_rf+ u^z\partial_z f\right) r\, d(r,z)dt + \int_{\H} \beta(\xi_0) f(t=0)r\, d(r,z) = 0
 \]
 for any $f \in C_c^{\infty}([0,T)\times \H)$ and any bounded $\beta\in C^1(\R)$ vanishing near zero. 
 \end{definition}
We remark that the notion of renormalized solutions  implies the conservation of the $L^p$ integral of vorticity in the sense of \eqref{vorticity} via a standard approximation argument. Moreover, it is shown in \cite{DiPernaLions89,Ambrosio04} that renormalized solutions are transported by the Lagrangian flow of the vector field $u$ as in the smooth situation. We will further comment on this in Section \ref{sec:DP} below. The relation between Lagrangian transport and the partial differential equations \eqref{omega} and \eqref{xi} was thoroughly reviewed in \cite{AmbrosioCrippa14}. 
 
 In the present paper, we study solutions to the vorticity equation \eqref{xi} in the case where the initial (relative) vorticity is unbounded, more precisely,
 \begin{equation}\label{initial}
\xi_0\in L^1\cap L^p(\R^3)
\end{equation}
for some $p\in (1,\infty)$. We are thus outside of the class of functions in which uniqueness is known to hold \cite{Danchin07,AbidiHmidiKeraani10}. On the positive side, existence of  distributional solutions to the Euler equations \eqref{Euler1}, \eqref{Euler2} was proved in \cite{JiuWuYang15} for initial vorticities satisfying \eqref{initial} and under the additional assumption that the  initial kinetic energy is finite,  $u_0\in L^2(\R^3)^3$. (Notice that local $L^2$ bounds on the initial velocity can be deduced from the integrability assumptions on the vorticity via Sobolev embeddings, cf.~\eqref{embedding}.) For larger integrability exponents and (near) vortex sheet initial data, (crucial insights on) existence results were previously obtained in \cite{UkhovskiiIudovich68,Delort92,SaintRaymond94,ChaeKim97,ChaeImanuvilov98,JiuXin04,JiuXin06}. To the best of our knowledge,  renormalized solutions (Definition \ref{D2}) have  not been considered  in the context of the axisymmetric Euler equations.
 
 We are particularly interested into solutions that are obtained as the vanishing viscosity limit from the Navier--Stokes equations, which are, in fact, physically meaningful approximations to the Euler equation. Hence, for any viscosity constant $\nu>0$, we consider solutions $(\unu,p_{\nu})$ to the Navier--Stokes equations
 \begin{align}
 \partial_t \unu + \unu\cdot \grad_x \unu + \grad_x p_{\nu}  &= \nu\laplace_x \unu \label{NS1},\\
 \grad_x\cdot \unu &=0.\label{NS2}
 \end{align}
We furthermore impose fixed initial conditions, $\unu(0)=u_0$ and shall assume that $\unu$ is axisymmetric, that is, $\unu = \unu(t,r,z)$ and $\unu = (\unu)^re_r + (\unu)^ze_z$.

 Instead of working with the momentum equation \eqref{NS1}, will mostly study its vorticity formulation, which is a viscous version of \eqref{omega} (or \eqref{xi}),  see \eqref{omega-nu} (or \eqref{xinu}) below. It was shown in \cite{GallaySverak15} that under the assumption \eqref{initial} on the initial data, which implies that $\omega_0\in L^1(\H)$, there exists a unique global (mild) solution $\omega^{\nu}\in C^0([0,\infty)\times L^1(\H)) \cap C^0((0,\infty)\times L^{\infty}(\H))$ to the viscous vorticity equation.

 Starting from this solution to the Navier--Stokes equations, our first result addresses compactness and  convergence   to the Euler equations. 
 \begin{theorem}[Compactness and convergence to Euler]\label{thm:compactness}
Let $\unu$ be the unique solution to the Navier--Stokes equations \eqref{NS1}, \eqref{NS2}  with initial datum $u_0\in L^2_{\loc}(\R^3)$ such that the associated relative vorticity $\xi_0$ belongs to $L^1 \cap L^p(\R^3)$ for some $p>1$.  Then there exist $u \in C([0,T];L^2_{\loc}(\R^3)^3)$  with $\grad u\in L^{\infty}((0,T);L^p_{\loc}(\R^3)^{3\times 3})$ and $\xi \in L^{\infty}((0,T);L^1\cap L^p(\R^3))$ and a subsequence $\{\nu_k\}_{k=0}^{\infty}$ such that 
\[
 u_{\nu_k}\to u \quad \mbox{strongly in }C([0,T];L^2_{\loc}(\R^3))
\]
and 
\[
\xi_{\nu_k}\to \xi  \quad \mbox{weakly-$\star$ in }L^{\infty}((0,T);L^p(\R^3)).
\]
Moreover, $u$ is a distributional solution to the Euler equations \eqref{Euler1}, \eqref{Euler2} and $\omega = r\xi $ is the corresponding  vorticity that is (in a distributional sense) related to $u$ by the Biot--Savart law \eqref{BS}.
\end{theorem}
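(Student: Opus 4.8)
The plan is to run a compactness argument driven by uniform-in-$\nu$ a priori bounds inherited from the (viscous) vorticity formulation, and then to pass to the limit in the weak formulations of the momentum equation and of the Biot--Savart law.

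\emph{Step 1: a priori estimates.} Recall that the relative vorticity $\xi_\nu = \omega_\nu/r$ of the axisymmetric Navier--Stokes flow solves the linear advection--diffusion equation $\partial_t\xi_\nu + \unu\cdot\nabla\xi_\nu = \nu\,\mathcal L\xi_\nu$, whose diffusion operator $\mathcal L = \Delta + \tfrac2r\partial_r$ is (formally) a five-dimensional Laplacian and is in particular $L^q$-dissipative for every $q\in[1,\infty]$. Combining this with the well-posedness theory of Gallay--\v{S}ver\'ak one gets $\|\xi_\nu(t)\|_{L^1\cap L^p(\R^3)}\le\|\xi_0\|_{L^1\cap L^p(\R^3)}$ uniformly in $t\in[0,T]$ and $\nu>0$; weak-$\star$ compactness of $\{\xi_\nu\}$ in $L^\infty((0,T);L^p(\R^3))$ then follows from Banach--Alaoglu. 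Since $\omega_\nu = r\xi_\nu$, on every bounded subset of $\H$ this gives a uniform bound for $\omega_\nu$ in $L^p(\H)$, so that Calder\'on--Zygmund theory applied to the axisymmetric Biot--Savart law \eqref{BS} yields a uniform bound $\nabla\unu\in L^\infty((0,T);L^p_{\loc}(\R^3)^{3\times3})$, while the Sobolev inequality \eqref{embedding} for $p<2$ (whose exponent $2p/(2-p)$ exceeds $2$ precisely because $p>1$), respectively the standard Sobolev embedding for $p\ge2$, gives a uniform bound for $\unu$ in $L^\infty((0,T);L^q_{\loc}(\R^3)^3)$ for some $q>2$; in particular $\unu$ is bounded in $L^\infty((0,T);W^{1,p}_{\loc})$.

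\emph{Step 2: strong compactness of the velocities.} We would obtain strong compactness of $\{\unu\}$ in $C([0,T];L^2_{\loc}(\R^3))$ from the Aubin--Lions--Simon lemma: the spatial bound of Step~1 is at hand, and testing the momentum equation \eqref{NS1} (after applying the Leray projection, so that the pressure is expressed through $-\Delta p_\nu = \nabla\cdot\nabla\cdot(\unu\otimes\unu)$) bounds $\partial_t\unu$ in $L^\infty((0,T);W^{-N,s}_{\loc})$ for suitable $N$ and $s$, using that $\unu\otimes\unu$ is bounded in $L^\infty((0,T);L^{q/2}_{\loc})$ with $q/2>1$ and that $\nu\nabla\unu$ is bounded. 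Since $W^{1,p}_{\loc}\hookrightarrow\hookrightarrow L^2_{\loc}\hookrightarrow W^{-N,s}_{\loc}$ (the compact embedding again relying on $p>1$), a diagonal extraction produces a single sequence $\nu_k\to0$ along which $\unuk\to u$ strongly in $C([0,T];L^2_{\loc})$ and $\xi_{\nu_k}\rightharpoonup^\star\xi$ in $L^\infty((0,T);L^p(\R^3))$. The regularity $u\in C([0,T];L^2_{\loc})$ is inherited from the mode of convergence, the bounds on $\nabla u$ and on $\xi$ follow from the uniform estimates by weak lower semicontinuity (and Fatou for the $L^1$ component of $\xi$), and $u(0)=u_0$ is preserved because the convergence is uniform in time.

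\emph{Step 3: passing to the limit.} In the weak formulation of \eqref{NS1}--\eqref{NS2} the linear terms converge trivially, $\nu_k\nabla\unuk\to0$ in $\mathcal D'$, and the quadratic term converges because $\unuk\to u$ in $C([0,T];L^2_{\loc})$ while staying bounded in $L^\infty L^q_{\loc}$ with $q>2$, hence $\unuk\otimes\unuk\to u\otimes u$ in $L^1_{\loc}((0,T)\times\R^3)$; thus $u$ is a distributional solution in the sense of Definition~\ref{def:Euler}. For the Biot--Savart relation, we use that $\unuk=\mathrm{BS}[\omega_{\nu_k}]$ with $\omega_{\nu_k}=r\xi_{\nu_k}$: pairing with a test field and moving the (odd, $(-2)$-homogeneous) Biot--Savart kernel onto the test field produces a bounded field with enough decay to be paired against the family $\{\xi_{\nu_k}\}$, which is uniformly bounded in $L^1\cap L^p(\R^3)$, so $\xi_{\nu_k}\rightharpoonup^\star\xi$ gives $\mathrm{BS}[\omega_{\nu_k}]\to\mathrm{BS}[r\xi]$ in $\mathcal D'$; comparing with the strong limit $u$ yields $u=\mathrm{BS}[r\xi]$, i.e.\ $\omega=r\xi$ is the vorticity of $u$ related to it through \eqref{BS}.

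\emph{Main obstacle.} The delicate point is the temporal compactness in Step~2 when $p$ is close to $1$: then $\nabla\unu$ is barely better than $L^1_{\loc}$, so the convection term $\unu\cdot\nabla\unu$ and, through it, the pressure and $\partial_t\unu$ only live in a rather negative Sobolev space, and one must choose the exponents in the Aubin--Lions--Simon framework carefully and verify its hypotheses patch by patch. All of this has to be carried out while keeping track of the weight $r$ that distinguishes $L^p(\R^3)$ from $L^p(\H)$, which is the recurring technical bookkeeping of the whole proof.
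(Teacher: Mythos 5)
Your overall strategy (uniform a priori bounds, Aubin--Lions, limit passage in the weak formulation and in the Biot--Savart law) is the same as the paper's, but Step~1 has a genuine gap at the point where you claim a uniform local bound on $\unu$ \emph{itself}. You invoke the global Sobolev inequality \eqref{embedding}, $\|u\|_{L^{2p/(2-p)}(\H)}\lesssim\|\omega\|_{L^p(\H)}$, but the hypotheses only give control of the \emph{weighted} norm $\|\xinu\|_{L^p(\R^3)}^p\sim\int_\H|\omeganu|^pr^{1-p}\,d(r,z)$ together with $\|\omeganu\|_{L^1(\H)}$; for $p>1$ the weight $r^{1-p}$ degenerates as $r\to\infty$, so the unweighted global norm $\|\omeganu\|_{L^p(\H)}$ is \emph{not} controlled and \eqref{embedding} cannot be applied. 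A local version of \eqref{embedding} carries a lower-order remainder in $u$ on a larger ball, and bounding that remainder is precisely the nontrivial step. The paper closes this by estimating $\|\unu\|_{L^1(B_R)}$ directly from the pointwise decay $|K(r,z,\bar r,\bar z)|\lesssim(|r-\bar r|+|z-\bar z|)^{-1}$ of the axisymmetric kernel, splitting into near field (Young's inequality) and far field (kernel bounded by $1/R$), using only $\|\omeganu\|_{L^1(\H)}\le\|\xi_0\|_{L^1(\R^3)}$, and then upgrading to $W^{1,p_*}(B_R)$ via Poincar\'e together with the weighted Muckenhoupt maximal-regularity estimate $\|\tfrac1r\grad_xu\|_{L^p(\R^3)}\lesssim\|\xi\|_{L^p(\R^3)}$ (which localizes cleanly since $r\le R$ on $B_R$). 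Your ``Calder\'on--Zygmund applied locally to a locally bounded $\omega_\nu$'' shortcut for the gradient has the same defect: the operator is nonlocal, and the far-field contribution must be handled, which the weighted global estimate does for you.

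A secondary remark on Step~2: the Leray-projection route forces you to estimate the pressure and lands you in the ``rather negative Sobolev space'' you worry about in your closing paragraph. The paper sidesteps this entirely by measuring $\partial_t\unu$ in the dual of \emph{divergence-free} $W^{1,\infty}_0(B_R)$ fields, so the pressure never appears and the convection term is controlled by $\|\unu\|_{L^2(B_R)}^2\|F\|_{W^{1,\infty}}$, uniformly for all $p>1$; your ``main obstacle'' disappears with that choice of dual space. The remainder of your argument (weak-$\star$ compactness of $\xinu$, convergence of $\unuk\otimes\unuk$ in $L^1_{\loc}$ from the strong $C([0,T];L^2_{\loc})$ convergence, and the duality argument transferring the Biot--Savart relation to the limit) is consistent with the paper.
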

 The vanishing viscosity limit was studied for finite energy solutions with mollified initial datum satisfying the bound \eqref{initial} in \cite{JiuWuYang15}. The novelty in the above result is the kinetic energy may be unbounded. For earlier and related convergence results for non-classical solutions, we refer to \cite{UkhovskiiIudovich68,JiuXin04,HmidiZerguine09,Wu10,Abe18} and references therein.
 
 Our next statement concerns the renormalization property of the relative vorticity.

 \begin{theorem}[Renormalization]\label{thm:renorm}
Let $u$ and $\xi$ be  the velocity field and relative vorticity, respectively, from  Theorem \ref{thm:compactness}. Then $\xi$ is a renormalized solution to the transport equation \eqref{xi} with velocity $u$. In particular, it holds that
 \[
 \|\xi(t)\|_{L^p(\R^3)}  = \|\xi_0\|_{L^p(\R^3)}
 \]
 and $\xi$ is transported by the regular Lagrangian flow of $u$ in $\R^3$.
 \end{theorem}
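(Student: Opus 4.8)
The plan is to obtain the renormalization property by a stability/commutator argument at the level of the viscous approximations, passing to the limit $\nu_k\to 0$. For each fixed $\nu>0$, the relative vorticity $\xin_\nu=\omega^\nu/r$ solves a viscous transport equation with smooth, bounded velocity field $u^\nu$ (by the regularity from \cite{GallaySverak15}), so $\xin_\nu$ is automatically a renormalized solution: for any bounded $\beta\in C^1(\R)$ vanishing near zero,
\[
\partial_t\beta(\xin_\nu) + u_\nu^r\partial_r\beta(\xin_\nu) + u_\nu^z\partial_z\beta(\xin_\nu) = \nu\left(\laplace\beta(\xin_\nu) - \beta''(\xin_\nu)|\grad\xin_\nu|^2\right) + \text{(lower order } r^{-1}\text{ terms)},
\]
written in the weighted weak form of Definition \ref{D2} with test function $f$. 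The terms I would track carefully are: (i) the viscous dissipation term $-\nu\int\beta''(\xin_\nu)|\grad\xin_\nu|^2 f\,r\,d(r,z)dt$, which has a sign if $\beta$ is concave but in general must be controlled via the uniform $L^1\cap L^p$ bound combined with parabolic smoothing estimates on $\grad\xin_\nu$ from \cite{GallaySverak15}; (ii) the $r^{-1}$-type geometric terms coming from writing $\laplace$ in cylindrical coordinates acting on $\beta(\omega^\nu/r)$, which need the test function to be compactly supported away from $\{r=0\}$ so the weight $r$ stays bounded below.

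The key passage is the weak convergence of the nonlinear flux $\beta(\xin_{\nu_k})\,u_{\nu_k}$. Here I would use that $u_{\nu_k}\to u$ strongly in $C([0,T];L^2_{\loc})$ (Theorem \ref{thm:compactness}) and $\grad u_{\nu_k}$ is uniformly bounded in $L^\infty(L^p_{\loc})$, hence on a compact subset of $\H$ away from the axis the velocities converge strongly in, say, $L^q_{\loc}$ for a good range of $q$; on the other hand $\beta(\xin_{\nu_k})$ is bounded in $L^\infty((0,T)\times\H)$ and converges weakly-$\star$ along a further subsequence to some $\overline{\beta(\xi)}$. The product then converges weakly to $\overline{\beta(\xi)}\,u$. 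The real work is to identify $\overline{\beta(\xi)}=\beta(\xi)$, i.e., to upgrade the weak-$\star$ convergence $\xin_{\nu_k}\rightharpoonup\xi$ to strong convergence in $L^1_{\loc}((0,T)\times\H)$ (equivalently, in measure). I expect this to be the main obstacle, and I would attack it exactly as in the planar cases \cite{CrippaSpirito15,CrippaNobiliSeisSpirito17}: either via a duality argument using the DiPerna–Lions/Crippa theory for the transport equation with $W^{1,p}$ (resp.\ $p>1$) velocity — establishing that the limit equation has a unique solution obtained via the regular Lagrangian flow, and that the approximating sequence is Lagrangian — or via the stability functionals of \cite{Seis17,Seis18} adapted to the weighted half-space geometry. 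Since $p>1$ and $\grad u\in L^\infty(L^p_{\loc})$, the DiPerna–Lions framework applies after localizing away from the axis (where $u$ is even more regular since $\xi=\omega/r$ is integrable).

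So the ordered steps are: \textbf{(1)} record that $\xin_\nu$ is a renormalized solution of the viscous equation and write the weak identity against $f\in C_c^\infty([0,T)\times\H)$, keeping the $\nu$-terms explicit; \textbf{(2)} use the uniform bounds (from Theorem \ref{thm:compactness} and \cite{GallaySverak15}) together with parabolic estimates to show the viscous terms $\nu(\cdots)$ vanish as $\nu_k\to0$ — for a general $\beta$ one localizes and uses that $\nu\|\grad\xin_\nu\|_{L^2}^2$-type quantities are controlled, or, more cleanly, first proves the inequality-version for concave $\beta$ (where the dissipation has the right sign and can simply be dropped) and then bootstraps to equality by applying it to $\pm\beta$; \textbf{(3)} prove strong $L^1_{\loc}$ compactness of $\{\xin_{\nu_k}\}$ via the DiPerna–Lions duality argument (or the stability method), which is where the hypothesis $p>1$ is essential; \textbf{(4)} pass to the limit in the flux term $\beta(\xin_{\nu_k})u_{\nu_k}\rightharpoonup\beta(\xi)u$ and in the initial datum, obtaining the renormalized formulation of Definition \ref{D2} for the limit. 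The stated consequences then follow from standard facts recalled in the excerpt: conservation of $\|\xi(t)\|_{L^p(\R^3)}$ by approximating $s\mapsto|s|^p$ (suitably truncated near $0$ and $\infty$) by admissible $\beta$'s and using the $L^1\cap L^p$ bound to control the truncation errors, and transport by the regular Lagrangian flow of $u$ via the equivalence between renormalized solutions and Lagrangian transport established in \cite{DiPernaLions89,Ambrosio04} (see Section \ref{sec:DP}), applied on $\R^3$ where $u\in L^\infty(W^{1,p}_{\loc})$ with divergence zero.
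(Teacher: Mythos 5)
Your plan contains the right tool --- the DiPerna--Lions duality argument from \cite{CrippaSpirito15,CrippaNobiliSeisSpirito17} --- but it appears only as an unexecuted sub-step (your step (3)), while the actual work you carry out (steps (1), (2), (4)) is both technically problematic and, in the end, unnecessary. The paper's proof is \emph{only} the duality argument: one introduces the backward advection--diffusion equation dual to \eqref{xinu} with the viscous velocities $\unu$ and source $\chi$, derives a uniform $L^{\infty}((0,T);L^q(\R^3))$ bound on the dual solutions $\fn$ (Lemma \ref{lem:backwards}), passes to the weak-$\star$ limit to the backward transport equation \eqref{tvp}, tests \eqref{xinu} with $\fn$, and compares the resulting limit identity with the duality formula of Lemma \ref{s1-L1} applied to the unique renormalized solution $\tilde\xi$ guaranteed by Theorem \ref{thm:DP}. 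This identifies the weak-$\star$ limit $\xi$ with $\tilde\xi$ directly, using nothing more than the weak-$\star$ convergence of $\xi_{\nu_k}$; no strong compactness of $\xi_{\nu_k}$, no renormalization of the viscous equation, and no passage to the limit in $\beta(\xi_{\nu_k})u_{\nu_k}$ is ever needed. In your write-up the duality step is precisely where all the content lies, and you do not say what the dual problem is, why it is uniformly solvable, or how the pairing at time $t=0$ survives the limit (one needs to upgrade the weak-$\star$ convergence of $\fn$ to convergence in $C([0,T];L^q_{\weak})$, cf.\ \eqref{improved_conv}).

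Two of the auxiliary steps you do spell out would moreover fail as stated. First, the device ``prove the inequality for concave $\beta$ and then apply it to $\pm\beta$'' cannot close: if $\beta$ is concave then $-\beta$ is convex, so the dissipation term changes sign and you only ever get a one-sided inequality except for affine $\beta$. Second, the claim that $\nu\int\beta''(\xin)|\grad\xin|^2 f\,r\,d(r,z)dt$ vanishes ``via the uniform $L^1\cap L^p$ bound combined with parabolic smoothing'' is not justified: the dissipation estimate \eqref{dissipation} only shows that $\nu\int_0^T\int|\xin|^{p-2}|\grad\xin|^2\,r\,d(r,z)dt$ is bounded by a constant, i.e.\ $O(1)$ rather than $o(1)$ as $\nu\to0$, and for a general bounded $\beta\in C^2$ vanishing near zero one does not have $|\beta''(s)|\lesssim|s|^{p-2}$ for large $|s|$ when $p<2$, so this quantity does not even dominate the term you need to kill. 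If you instead execute the duality step as in the paper, the theorem follows at once and steps (1), (2) and (4) can be discarded; the strong convergence of $\xi_{\nu_k}$ that you were aiming for is then a \emph{consequence} (weak convergence plus conservation of the $L^p$ norm), not an ingredient. Your final paragraph on deducing the $L^p$ conservation and the Lagrangian transport from the renormalization property is correct and matches the paper.
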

 
 To the best of our knowledge, in this result, renormalized solutions to the axisymmetric Euler equations are considered for the first time. We recall from the above discussion that for $p\in (1,4/3)$, the interpretation of the transport equation \eqref{xi} as a distributional solution does not apply anymore as the  transport nonlinearity is no longer integrable. In particular, while for $p\ge 4/3$ our result implies that distributional and renormalized solutions coincide, in the low integrability range, we show the existence of renormalized solutions. We also recall that for $p\ge 2$, the result in Theorem \ref{thm:renorm} is already covered in DiPerna and Lions's original paper \cite{DiPernaLions89}. In section \ref{sec:DP}, we recall the theory from \cite{DiPernaLions89} and explain what we mean by $\xi$ being transported by a flow. For a precise definition of regular Lagrangian flows, we refer to \cite{Ambrosio04,AmbrosioCrippa14}. 
 
Our final result addresses the conservation of the kinetic energy.

 \begin{theorem}\label{thm:energy}
 Let $p\ge \frac32$. Suppose that the fluid has finite  kinetic energy, $u_0\in L^2(\R^3)^3$, and that  $\omega_0$ is nonnegative and has finite impulse,
 \[
 \int_{\H} \omega_0 r^2\, d(r,z) <\infty.
 \]
 Then the kinetic energy is preserved,
 \[
 \|u(t)\|_{L^2(\R^3)} = \|u_0\|_{L^2(\R^3)}.
 \]
 \end{theorem}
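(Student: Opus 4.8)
\textbf{Proof strategy for Theorem \ref{thm:energy}.}

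The plan is to exploit the renormalization property from Theorem \ref{thm:renorm} together with the classical commutator/mollification approach of Constantin--E--Titi, but adapted to the axisymmetric geometry and carried out on the level of the vorticity equation rather than the momentum equation. The starting point is that, by Theorem \ref{thm:renorm}, the relative vorticity $\xi$ is renormalized and hence transported by the regular Lagrangian flow of $u$; in particular $\|\xi(t)\|_{L^p(\R^3)}=\|\xi_0\|_{L^p(\R^3)}$, and the same for the $L^1$ norm, so all the a priori bounds used below are preserved in time. The kinetic energy can be written in terms of the vorticity via the Biot--Savart law \eqref{BS}; equivalently, one uses the stream function $\psi$ with $u = \grad^\perp\psi$ (in the appropriate axisymmetric sense), so that $\|u(t)\|_{L^2(\R^3)}^2$ is a bilinear functional of $\omega(t)=r\xi(t)$ against the axisymmetric Biot--Savart kernel. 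The goal is to differentiate this quadratic functional in time and show the derivative vanishes.

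The key steps, in order, are: (i) Write the kinetic energy as $E(t)=\tfrac12\iint G(x,y)\,\omega(t,x)\omega(t,y)\,dx\,dy$ with $G$ the (symmetric, positive) kernel obtained from \eqref{BS}, and record the decay/singularity estimates for $G$ and $\grad G$ in the axisymmetric variables — here the finite-impulse hypothesis $\int_\H \omega_0 r^2\,d(r,z)<\infty$ and nonnegativity of $\omega_0$ enter, since they control the behaviour of the kernel interaction at large $r$ and guarantee that the energy integral and its formal time derivative are absolutely convergent. (ii) Mollify $\omega$ at scale $\delta$ in the $(r,z)$ variables, $\omega^\delta = \omega * \rho_\delta$, use the continuity equation \eqref{omega} in the form $\partial_t\omega^\delta = -\div_{(r,z)}(u\,\omega)^\delta$, and plug this into $\tfrac{d}{dt}E^\delta(t)$ where $E^\delta$ is the energy of the mollified field. (iii) Integrate by parts to produce a commutator term $\iint \grad G\cdot\big[(u\omega)^\delta - u\,\omega^\delta\big]$ plus a symmetric "diagonal" term; the symmetric structure of $G$ makes the main transport terms cancel as in the two-dimensional argument of \cite{CheskidovFilhoLopesShvydkoy16}. (iv) Estimate the commutator: by the Sobolev bound \eqref{embedding}, $u\in L^\infty_t L^{2p/(2-p)}$, and $\omega = r\xi$ with $\xi\in L^\infty_t(L^1\cap L^p)$, so $u\omega$ lies in a suitable $L^\infty_t L^s$ space; one then invokes the standard commutator lemma (à la DiPerna--Lions / Constantin--E--Titi) to show the commutator tends to $0$ in $L^1_{\loc}$ as $\delta\to 0$, using that $\grad u$ is the Calderón--Zygmund singular integral of $\omega$ and is bounded in $L^\infty_t L^p$. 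The threshold $p\ge 3/2$ is exactly what is needed for the product $u\cdot\grad G\cdot(u\omega)$-type terms (equivalently $\|u\|^2\|\grad u\|$ combinations) to be integrable: with $u\in L^{2p/(2-p)}$ and $\grad u\in L^p$, Hölder requires $\tfrac{2(2-p)}{2p}+\tfrac1p\le 1$, i.e. $p\ge 3/2$.

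I expect the main obstacle to be step (iii)--(iv): controlling the kernel $\grad G$ near the symmetry axis $r=0$ and at infinity, since the axisymmetric Biot--Savart kernel is genuinely three-dimensional and loses the clean homogeneity of the planar case. Concretely, one must show that the "diagonal" contribution $\iint \grad_x G(x,y)\cdot u(x)\,\omega^\delta(x)\omega^\delta(y)\,dx\,dy$ and its $y$-counterpart combine, via the antisymmetry $\grad_x G(x,y) = -\grad_y G(y,x)$ of the kernel, into something that vanishes or is itself a vanishing commutator — this is where the precise structure of the axisymmetric kernel (as analyzed in \cite{GallaySverak15}) must be used, rather than the bare estimates. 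A secondary technical point is justifying the interchange of $\lim_{\delta\to0}$ and $\tfrac{d}{dt}$, i.e. passing from $E^\delta(t)=E^\delta(0)$ to $E(t)=E(0)$; this follows from the $L^\infty_t$ bounds on $\xi$ and dominated convergence, using the finite-impulse assumption to dominate the large-$r$ tails uniformly in $\delta$, together with the already established strong $C([0,T];L^2_{\loc})$ convergence from Theorem \ref{thm:compactness} to identify the limit of $E^\delta$ with the actual kinetic energy $\|u(t)\|_{L^2(\R^3)}^2$.
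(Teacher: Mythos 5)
Your proposal takes a genuinely different route from the paper, and as written it has two gaps that I do not see how to close. First, the heart of your argument --- the Constantin--E--Titi/symmetrized double-integral mechanism --- is exactly what the authors state is \emph{not} available here: the axisymmetric Biot--Savart kernel $G(r,z,\bar r,\bar z)$ is not a convolution kernel and lacks the antisymmetry that makes the planar symmetrization of the transport nonlinearity work (the introduction says explicitly that it is not obvious how to extend the Delort-type symmetrization techniques to this setting). You flag this as ``the main obstacle'' but offer no resolution, so the core cancellation in your steps (iii)--(iv) is unsubstantiated. Second, your justification of the threshold $p\ge\tfrac32$ is not valid: mere integrability of the flux term $u\otimes u:\grad u$ does not imply that the commutator vanishes --- that is precisely the content of Onsager's conjecture and the convex-integration counterexamples. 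A direct commutator argument needs roughly $B^{1/3}_{3,\infty}$ regularity, which from $\grad_x u\in L^\infty_t L^p(\R^3)$ alone would require $p$ well above $\tfrac32$ in three dimensions; moreover your H\"older computation mixes the two-dimensional norms on $\H$ from \eqref{embedding} with the three-dimensional norms appearing in the energy.

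The paper's proof works entirely on the viscous side and is much more elementary in structure. It combines the exact Navier--Stokes energy balance (Lemma \ref{lem:balance}) with the chain of estimates
\[
\|\grad_x \unu(s)\|_{L^2(\R^3)}^2=\|r\xinu(s)\|_{L^2(\R^3)}^2\le \|r^2\xinu(s)\|_{L^1(\R^3)}\,\|\xinu(s)\|_{L^\infty(\R^3)}\lesssim (\nu s)^{-\frac{3}{2p}},
\]
where the first factor is controlled by the \emph{conserved impulse} (this is where nonnegativity and $\int\omega_0 r^2<\infty$ enter) and the second by the parabolic smoothing estimate of Lemma \ref{xi_improve}; integrating in time gives a total dissipation $\lesssim(\nu t)^{1-\frac{3}{2p}}\to 0$ precisely when $p>\tfrac32$ --- this is the true origin of the threshold. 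The remaining ingredient, which your proposal does not address at all, is Lemma \ref{L4}: upgrading the local convergence of Theorem \ref{thm:compactness} to $\|u_{\nu_k}(t)\|_{L^2(\R^3)}\to\|u(t)\|_{L^2(\R^3)}$, i.e.\ showing no energy escapes to spatial infinity, which requires the cutoff/pressure estimates and again the impulse bound. If you want to salvage your approach, you would have to either establish an axisymmetric analogue of the symmetrized vorticity formulation of the flux or prove a Besov-type bound strong enough for the commutator estimate; neither is supplied by the a priori estimates in this paper.
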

In order to show conservation of energy, the growth of vorticity at infinity has to be suitably controlled. Here, we choose a growth condition that is natural as it can be interpreted as the control of the fluid impulse. Notice that the latter is conserved by the evolution, cf.~Lemma \ref{L2}. This is in principle not required by our method of proving Theorem \ref{thm:energy}, and any estimate of the form $\|r^2\omega(t)\|_{L^1(\H)}\lesssim \|r^2\omega_0\|_{L^1(\H)}$ would be sufficient. It is, however, not clear to us whether such an estimate holds true under our integrability assumptions apart from the special case considered in Lemma \ref{L2}, that is, for nonnegative (or nonpositive) vorticity fields. Also, if higher order moments could be controlled, our method shows that the value of $p$ could be lowered (at least up to $p>\frac65$).  See, for instance,  \cite{ChaeKim97} for similar results in the setting with $p>3$ (and general solutions).
 We leave this issue for future research and consider the simplest case here.

From the result in Theorem \ref{thm:energy}, it follows that were are outside of the range in which Kolmogorov's celebrated K41 theory of three-dimensional turbulence applies, since, similar to the case of planar two-dimensional turbulence, there cannot be  anomalous diffusion.

From here on, we will simplify the notation by writing $\grad = {\partial_r \choose \partial_z}$, with the interpretation that $\grad\cdot f  = \partial_r f^r + \partial_z f^z$ while $\grad_x\cdot f = \partial_1 f^1 + \partial_2f^2 +\partial_3 f^3$ is the divergence with respect to a Cartesian basis. The advective derivatives $f\cdot \grad$ and $f\cdot \grad_x$ are to be interpreted correspondingly.

The remainder of the article is organized as follows: In Section \ref{sec:DP} we recall the parts of the DiPerna--Lions theory for transport equations and explain how the results apply to the setting under consideration. In Section \ref{sec:est} we provide estimates for the velocity field that are essentially based on the Biot--Savart law. Section \ref{sec:glob} contains global estimates for the axisymmetric Navier--Stokes equations, while the proof of Theorems \ref{thm:compactness}, \ref{thm:renorm} and \ref{thm:energy} are given in Sections \ref{RS+VV}, \ref{sec:renorm} and \ref{sec:energy}, respectively. This work, finally, contains an appendix in which a helpful interpolation estimate is provided.

 \section{Renormalized solutions for linear transport equations}\label{sec:DP}

In this section, we shall briefly recall DiPerna and Lions's theory for linear transport equations \cite{DiPernaLions89} and explain how it applies to the situation at hand. We are particularly interested into well-posedness and renormalization properties  of the transport equation \eqref{xi}, which we shall now treat as a (linear) passive scalar equation
\begin{equation}\label{23}
\partial_t \theta + u\cdot \grad \theta = 0
\end{equation}
for some scalar quantity $\theta$ and a velocity field $u$  that  does not depend on $\theta$. Yet, we have in mind that $u$ has its origin in the fluid dynamics problem that is considered in the main part of this paper. We shall thus continue assuming that the flow is incompressible in the sense of \eqref{div},
and that $u^r=0$ on $\partial \H$, which is ensured in the nonlinear setting by the Biot--Savart law \eqref{BS}, see also the discussion in \cite{GallaySverak15}. 
Working in cylindrical coordinates becomes at this point problematic as the cylindrical divergence of the velocity field $u$ might in general be unbounded opposed to the Cartesian divergence, which vanishes identically.  In order to apply the DiPerna--Lions theory, in which that boundedness is a crucial assumption, it is therefore advantageous  to go back to the Cartesian formulation and rewrite \eqref{23} as
\[
\partial_t \theta + u\cdot \grad_x \theta = 0.
\]
If, in addition, $u$ is Sobolev regular, as is the case for the axisymmetric Euler equations under the integrability assumption \eqref{vorticity} on the vorticity, the theory in \cite{DiPernaLions89} applies. We summarize some of the main results, again formulated for the axisymmetric setting, and not aiming for the most general assumptions.


%

\begin{theorem}[\cite{DiPernaLions89}]\label{thm:DP}
 Let $T>0$ and $p\in (1,\infty)$ be given and  $\theta_0\in L^p(\R^3)$   and $u\in L^1((0,T);W^{1,1}_{\loc}(\H)^3)$ be such that $r^{-1}\partial_r(ru^r) +\partial_zu^z=0$ and
 \begin{equation}\label{growth}
 \frac{|u|}{1+r+|z|} \in L^1((0,T)\times \R^3) + L^{\infty}((0,T)\times \R^3).
 \end{equation}

\noindent
(i) There exists a unique renormalized  solution $\theta\in L^{\infty}((0,T);L^p(\R^3))$ of the transport equation \eqref{23} with initial datum $\theta_0$. 

\noindent
(ii) This solution is stable under approximation in the following sense: Let $\{\theta_0^k\}_{k\in \N}$ be a sequence that approximates $\theta_0$ in $L^p(\R^3)$ and $\{u^k\}_{k\in\N}$ a sequence that approximates $u$ in $ L^1((0,T);W^{1,1}_{\loc}(\H)^3)$ and such that $r^{-1}\partial_r(ru_k^r) +\partial_zu_k^z=0$. Let $\theta^k$ denote the corresponding renormalized solution. Then $\theta^k\to \theta$ strongly in $C([0,T];L^p(\R^3))$.

\noindent
(iii) If $q\in (1,\infty)$ is such that $\frac1p+\frac1q\le1$ and $u\in L^1((0,T);W^{1,q}_{\loc}(\H)^3)$, then distributional solutions are renormalized solutions and vice versa.
\end{theorem}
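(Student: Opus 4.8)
The plan is to recognise the statement as a transcription of the transport theory of \cite{DiPernaLions89} to the axisymmetric setting, and to reduce it to that theory through the passage from cylindrical to Cartesian coordinates sketched above. First I would record the three facts that make the reduction work. An axisymmetric field $u=u^re_r+u^ze_z$ is a genuine vector field on $\R^3$, and the cylindrical incompressibility relation $r^{-1}\partial_r(ru^r)+\partial_zu^z=0$ together with $u^r|_{\partial\H}=0$ says precisely that $\grad_x\cdot u=0$ in $\mathcal{D}'(\R^3)$; the hypothesis $u\in L^1((0,T);W^{1,1}_{\loc}(\H)^3)$ in the $(r,z)$-variables, together with the behaviour at the axis, amounts to $u\in L^1((0,T);W^{1,1}_{\loc}(\R^3)^3)$ in Cartesian variables; and since $|x|\le r+|z|\le\sqrt2\,|x|$, the growth hypothesis \eqref{growth} is, over the finite time interval, the growth condition required in \cite{DiPernaLions89}. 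After this reduction we are exactly in the framework of \cite{DiPernaLions89} for (weakly) divergence-free Sobolev vector fields.

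Granting the reduction, part (i) is the DiPerna--Lions existence and uniqueness result: existence follows by mollifying $u$ and $\theta_0$ and passing to the limit, and uniqueness follows by the standard argument that the renormalized formulation together with $\grad_x\cdot u=0$ yields conservation of $\int_{\R^3}\beta(\theta(t))\,dx$ for admissible $\beta$, which forces $\theta\equiv0$ when the initial datum vanishes. Part (ii) is the companion stability statement: from an approximating sequence $(u^k,\theta_0^k)$ one extracts a weak-$\star$ limit of the renormalized solutions $\theta^k$, uses that the renormalization property is stable under the assumed convergences (the commutator terms discussed below pass to the limit under $L^1((0,T);W^{1,1}_{\loc})$-convergence of the fields) to identify the limit as the renormalized solution of the limiting equation, which by (i) coincides with $\theta$, and finally upgrades weak-$\star$ to strong convergence in $C([0,T];L^p(\R^3))$ using conservation of the $L^p$ norm and the uniform convexity of $L^p$ for $1<p<\infty$. (One also needs a uniform version of the bound \eqref{growth}, available in the situations of interest.)

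For part (iii) I would invoke the commutator lemma of \cite{DiPernaLions89} in its $L^p$--$W^{1,q}$ form. The implication ``renormalized $\Rightarrow$ distributional'' is elementary: when $\frac1p+\frac1q\le1$ one has $u\theta\in L^1_{\loc}((0,T)\times\R^3)$ by H\"older's inequality, so in the renormalized formulation one lets $\beta$ range over bounded $C^1$ functions vanishing near $0$ with $\beta(s)\to s$ and $|\beta(s)|\le|s|$, and passes to the limit by dominated convergence. Conversely, given a distributional solution $\theta$, mollification in $x$ gives $\partial_t\theta_\varepsilon+u\cdot\grad_x\theta_\varepsilon=R_\varepsilon$ with $R_\varepsilon=u\cdot\grad_x\theta_\varepsilon-(u\cdot\grad_x\theta)\ast\rho_\varepsilon$, and the commutator lemma yields $\|R_\varepsilon\|_{L^s_{\loc}}\lesssim\|\grad_x u\|_{L^q}\|\theta\|_{L^p}$ with $\frac1s=\frac1p+\frac1q\le1$ and $R_\varepsilon\to0$ in $L^1_{\loc}$; hence $\beta(\theta_\varepsilon)$ solves the transport equation with right-hand side $\beta'(\theta_\varepsilon)R_\varepsilon\to0$, and letting $\varepsilon\to0$ gives the renormalized formulation.

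The one genuinely non-routine point is the Cartesian reduction at the symmetry axis $\{r=0\}$, where cylindrical coordinates degenerate: one must check that the hypotheses really force $u$ to be a Cartesian $W^{1,1}_{\loc}(\R^3)$ field whose distributional divergence carries no piece supported on the axis. Since the axis has codimension two this requires a short argument---testing against functions vanishing near the axis, or a capacity/removability argument combined with $u^r|_{r=0}=0$---after which parts (i)--(iii) are direct citations of \cite{DiPernaLions89}.
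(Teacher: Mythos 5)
Your proposal is correct and follows exactly the route the paper takes: this theorem is stated as a citation of \cite{DiPernaLions89}, and the only genuinely new content is the reduction from cylindrical to Cartesian coordinates (where $\grad_x\cdot u=0$ holds and the DiPerna--Lions hypotheses, including the growth condition \eqref{growth}, are met), after which (i)--(iii) are quoted from the original work. Your extra remark about removability of the symmetry axis for Cartesian $W^{1,1}_{\loc}$ regularity is a point the paper glosses over, and it is correctly resolved as you suggest since the axis has codimension two (hence vanishing $\mathcal H^2$-measure in $\R^3$).
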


It has been proved in \cite{DiPernaLions89,Ambrosio04} that renormalized solutions are in fact transported by the (regular) Lagrangian flow of the vector field $u$, and this feature carries over to the cylindrical setting. Hence, it holds that $\theta(t,\phi(t,r,z)) = \theta_0(r,z)$, where $\phi$ satisfies a suitably generalized formulation of the ordinary differential equation
\[
\partial_t \phi(t,r,z) = u(t,\phi(t,r,z)),\quad \phi(0,r,z)=(r,z).
\]
In terms of the vorticity, the transport identity can be rewritten as $\omega(t,\phi(t,r,z)) =  \omega_0(r,z)\phi^r(t,r,z)/r $, and thus, $r/\phi^r(t,r,z)$ is the Jacobian.  See also \cite{AmbrosioCrippa14} for a review of the connection between the Lagrangian and Eulerian descriptions of transport by non-smooth velocity fields.

It follows a discussion of the validity of the growth condition \eqref{growth} in the Euler setting. 

\begin{remark}\label{rem}
In the nonlinear setting in which $u$ can be reconstructed from $\theta = \xi$  with the help of the Biot--Savart kernel \eqref{BS}, the growth condition \eqref{growth} it automatically fullfilled provided that $\xi\in L^{\infty}((0,T);L^1(\R^3))$ as it is assumed in this paper. Indeed, it is proved in \cite{GallaySverak15} that the axisymmetric Biot--Savart kernel satisfies similar decay estimates as the planar two-dimensional one, namely, if $G$ is obtained from restricting the three-dimensional Biot--Savart kernel to the axisymmetric setting, so that 
\[
u(r,z) = \int_{\H} G(r,z,\bar r,\bar z) \omega(\bar r,\bar z)\, d(\bar r,\bar z),
\]
it holds that
\[
|G(r,z,\bar r,\bar z)| \lesssim \frac1{|r-\bar r| + |z-\bar z|},
\]
cf.~\cite[Eq.~(2.11)]{GallaySverak15}. We now denote by $G_1$ the restriction of $G$ to the unit ball $B_1(r,z)$ and set $G_2=G-G_1$, and decompose  $u = u_1+ u_2$ accordingly.
Then, on the one hand, by Young's convolution inequality,
\begin{align*}
 \|u_1\|_{L^1(\H)} \lesssim \|(\chi_{B_1(0)}\frac1{|\cdot|})\ast |\omega| \|_{L^1(\H)}  \le \|\chi_{B_1(0)}\frac1{|\cdot|}\|_{L^1(\H)}\|\omega\|_{L^1(\H)} \lesssim \|\omega\|_{L^1(\H)}.
\end{align*}
On the other hand, 
\[
  \|u_2\|_{L^{\infty}(\R^3)} \lesssim \|(\chi_{B_1(0)^c}\frac1{|\cdot|})\ast |\omega| \|_{L^1(\H)} \le \|\omega\|_{L^1(\H)} .
\]
Using that $\|\omega\|_{L^1(\H)} = \frac1{2\pi}\|\xi\|_{L^1(\R^3)}$, we  deduce \eqref{growth}.
\end{remark}

Following \cite{CrippaSpirito15,CrippaNobiliSeisSpirito17}, our strategy for proving that vanishing viscosity solutions to the axisymmetric Euler equations are renormalized solutions relies on  duality arguments both in the viscous and in the inviscid setting. In the latter, we quote a suitable duality theorem from DiPerna and Lions's original work.

 \begin{lemma}[\cite{DiPernaLions89}]\label{s1-L1}
Let $p,q\in (1,\infty)$ be given such that $\frac1p+\frac1q=1$.  Let $u$  satisfy the general assumptions of Theorem \ref{thm:DP} and let $\theta\in L^{\infty}((0,T);L^p(\R^3))$ be the renormalized solution to the transport equation \eqref{23} with initial datum $\theta_0\in L^p(\R^3)$.  
Let $\chi \in L^{1}((0,T);L^q(\R^3))$ be given and let $f\in L^{\infty}((0,T);L^q(\R^3))$  be a renormalized solution of the backwards transport equation  
\begin{equation}\label{tvp}
-\partial_tf-u\cdot \nabla f=\chi.
    \end{equation}
Then it holds
 \begin{align*}
 \MoveEqLeft \int_0^T \int_{\H}\theta\,\chi\, r d(r,z)dt\\
 & =\int_{\H} \theta(T, r,z) f(T,r,z)\, rd (r,z)-\int_{\H}  \theta(0,r,z)f(0,r,z)\, rd(r,z).
 \end{align*}
 \end{lemma}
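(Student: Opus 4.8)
The plan is to establish the duality identity in Lemma \ref{s1-L1} by reducing to the smooth setting via the stability and renormalization properties already recorded in Theorem \ref{thm:DP}, and then passing to the limit in an integration-by-parts identity. First I would work in Cartesian coordinates, as advocated in the text, so that $u$ is divergence-free in the genuine sense and $r\,d(r,z)$ corresponds (up to the harmless factor $2\pi$) to three-dimensional Lebesgue measure; then both $\theta$ and $f$ solve honest transport equations on $\R^3$ with a divergence-free, Sobolev velocity field satisfying the growth bound \eqref{growth}. Since $p,q$ are conjugate, the product $\theta f$ lies in $L^\infty((0,T);L^1(\R^3))$, and the products $u\theta$, $uf$ are locally integrable by H\"older together with the Sobolev embedding \eqref{embedding}, so every term in the claimed identity makes sense.

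The core computation is the formal one: if $\theta$ solves $\partial_t\theta + u\cdot\grad_x\theta = 0$ and $f$ solves $-\partial_t f - u\cdot\grad_x f = \chi$, then
\[
\partial_t(\theta f) = (\partial_t\theta) f + \theta(\partial_t f) = -(u\cdot\grad_x\theta)f - \theta(u\cdot\grad_x f) - \theta\chi = -\grad_x\cdot(u\,\theta f) - \theta\chi,
\]
using $\grad_x\cdot u = 0$. Integrating over $\R^3$, the divergence term drops (this is where the growth condition \eqref{growth} and the $L^1$–in–space control must be invoked to justify that there is no flux at infinity, and a cutoff argument is needed), and integrating in $t$ from $0$ to $T$ yields exactly
\[
\int_{\H}\theta(T)f(T)\,r\,d(r,z) - \int_{\H}\theta(0)f(0)\,r\,d(r,z) = -\int_0^T\!\!\int_{\H}\theta\chi\,r\,d(r,z)\,dt,
\]
which rearranges to the assertion. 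To make this rigorous at the level of non-smooth $u$, I would first mollify: take $u^k\to u$ in $L^1((0,T);W^{1,1}_{\loc})$ with $\grad_x\cdot u^k = 0$, approximate $\theta_0$, $f(T,\cdot)$ and $\chi$ by smooth functions, and let $\theta^k$, $f^k$ be the corresponding (now classical, or at least strong) solutions; the identity above is legitimate for these. Theorem \ref{thm:DP}(ii) gives $\theta^k\to\theta$ and $f^k\to f$ strongly in $C([0,T];L^p)$ and $C([0,T];L^q)$ respectively, which suffices to pass to the limit in the boundary terms; for the interior term one uses $\theta^k\to\theta$ strongly in $L^p$ against $\chi^k\to\chi$ in $L^1_tL^q_x$. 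The renormalization property is what legitimises using the product rule on $\theta^k f^k$ even before full smoothness is available, and is also what guarantees the approximating solutions are unique (hence stable).

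The main obstacle I anticipate is the justification that the flux term $\int_{\R^3}\grad_x\cdot(u\,\theta f)\,dx$ vanishes, i.e.\ controlling the behaviour at spatial infinity: one must combine the decay of $u$ coming from the Biot--Savart estimate in Remark \ref{rem} (so $u\in L^1_x + L^\infty_x$) with the integrability of $\theta f\in L^1_x$, and insert a smooth radial cutoff $\chi_R$ supported in $B_R$, showing $\int u\cdot\grad_x\chi_R\,\theta f\,dx\to 0$ as $R\to\infty$ uniformly in $t$; the $L^\infty$ part of $u$ needs $\theta f$ to decay, while the $L^1$ part needs $\grad_x\chi_R$ bounded, and one has to check these estimates are uniform along the approximating sequence. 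A secondary technical point is that the mollified velocities $u^k$ need not exactly satisfy the growth condition \eqref{growth} with uniform constants near the boundary $\partial\H$; this is handled by mollifying in the Cartesian picture, where no boundary is present, and only afterwards translating back. Once these points are dispatched, the identity follows by the elementary chain-rule computation above.
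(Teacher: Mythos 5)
The paper does not prove this lemma at all: it is quoted verbatim from DiPerna and Lions \cite{DiPernaLions89}, so there is no in-paper argument to compare against. Your reconstruction is essentially the standard DiPerna--Lions duality proof and its overall architecture is sound: pass to Cartesian coordinates so that $\grad_x\cdot u=0$ and $r\,d(r,z)\,dt$ is (up to $2\pi$) Lebesgue measure, regularize, apply the product rule $\partial_t(\theta f)+\grad_x\cdot(u\,\theta f)=-\theta\chi$ at the smooth level, integrate, and pass to the limit using the stability statement of Theorem \ref{thm:DP}(ii) (suitably extended to the backward, inhomogeneous equation --- a routine but not entirely free extension that you should state explicitly, since part (ii) as written only covers the forward homogeneous problem).

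One point deserves more care, and you half-identify it yourself. In the ``mollify the solutions'' variant, the flux term $\int u\cdot\grad_x\chi_R\,\theta f\,dx$ does \emph{not} close as written: on the annulus $|u\cdot\grad_x\chi_R|\lesssim |u|/(1+|x|)=g_1+g_2$ with $g_1\in L^1_{t,x}$ and $g_2\in L^\infty_{t,x}$; the $g_2$ part is killed by $\theta f\in L^\infty_t L^1_x$ and dominated convergence, but the $g_1$ part pairs an $L^1_{t,x}$ function against $\theta f$, which is only in $L^1_x$, and H\"older gives nothing. The classical fix is to first work with bounded truncations $\beta(\theta)$, $\beta(f)$ (this is precisely where the renormalization hypothesis enters quantitatively, not just ``to legitimise the product rule'') and remove the truncation at the end. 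Alternatively --- and this is the cleaner route given your own setup --- in the ``mollify the data and the velocity'' scheme the approximate solutions $\theta^k$, $f^k$ have compact support in space for each $t$ (finite propagation speed follows from the linear growth of $u^k$), so no flux term ever appears and one only has to pass to the limit in the integrated identity, where $\int\theta^k(T)f^k(T)\,dx\to\int\theta(T)f(T)\,dx$ by H\"older and the strong $C([0,T];L^p)\times C([0,T];L^q)$ convergence. I would commit to that second route and drop the cutoff discussion, which as stated contains the gap.
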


\section{Estimates on the velocity field}\label{sec:est}

In this section, we provide some estimates on the velocity field that turn out to be helpful in the subsequent analysis. We continue denoting by $\omega$ and $\xi$ the vorticity and relative vorticity, respectively, of a given (steady) axisymmetric velocity field $u$, that is,  $\omega = \partial_zu^r-\partial_r u^z$ and $\xi =\omega/r$ independently from the Euler or Navier--Stokes  background. In particular, any of the following estimates are consequences of the explicit definitions or follow from suitable properties of the Biot--Savart kernel.

Our first result is a fairly standard identity for the enstrophy, that is, the (square of the) $L^2$ norm of the velocity gradient. 

\begin{lemma}\label{L3}
It holds that 
\[
\|\grad_x u\|_{L^2(\R^3)}=\|\omega \|_{L^2(\R^3)}.
\]
\end{lemma}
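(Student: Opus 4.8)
The plan is to prove the enstrophy identity $\|\grad_x u\|_{L^2(\R^3)} = \|\omega\|_{L^2(\R^3)}$ by exploiting the divergence-free condition together with the axisymmetric (no-swirl) structure of $u$. First I would work in Cartesian coordinates and recall the pointwise algebraic identity valid for any smooth divergence-free vector field: $|\grad_x u|^2 = |\curl u|^2 + \grad_x\cdot\big((u\cdot\grad_x)u - (\grad_x\cdot u)u\big)$, or more simply the well-known rewriting $|\grad_x u|^2 - |\curl u|^2 = \partial_i\partial_j(u^i u^j) - \partial_i(u^i \partial_j u^j)$, which when $\grad_x\cdot u = 0$ reduces to the divergence of a field, namely $|\grad_x u|^2 - |\curl u|^2 = \grad_x\cdot\big((\grad_x\cdot u)\,\text{something}\big)$ — more precisely $\int_{\R^3}|\grad_x u|^2\,dx = \int_{\R^3}|\curl u|^2\,dx$ whenever the boundary terms vanish. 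Since the vorticity vector is unidirectional, $\curl u = \omega\, e_\theta$ with $|e_\theta| = 1$, we get $|\curl u|^2 = \omega^2$, so the identity follows once the integration by parts is justified.

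The key steps, in order, are: (1) establish the pointwise identity $|\grad_x u|^2 = |\curl u|^2 + \grad_x\cdot Q$ for divergence-free $u$, where $Q = (u\cdot\grad_x)u$ (using $\grad_x\cdot((u\cdot\grad_x)u) = \partial_i u^j\,\partial_j u^i = \partial_j u^i\,\partial_i u^j$ and the elementary vector identity $|\curl u|^2 = |\grad_x u|^2 - \partial_i u^j\,\partial_j u^i$ valid pointwise when $\grad_x\cdot u=0$); (2) integrate over $\R^3$ and argue that the divergence term contributes nothing, either because $u$ decays sufficiently (by the Biot--Savart estimates of Section~\ref{sec:est} and the integrability of $\omega$), or — cleaner — by first proving the identity for a dense class of Schwartz-regular axisymmetric fields and then passing to the limit using $\grad_x u, \omega \in L^2$; (3) substitute $|\curl u|^2 = \omega^2$ using the axisymmetry without swirl, giving $\int_{\R^3}|\grad_x u|^2\,dx = \int_{\R^3}\omega^2\,dx$, and take square roots.

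Alternatively, and perhaps more transparently in this context, I would work directly in cylindrical coordinates: with $u = u^r e_r + u^z e_z$, the gradient tensor has the nonzero entries $\partial_r u^r$, $\partial_z u^r$, $\partial_r u^z$, $\partial_z u^z$ plus the off-diagonal "geometric" contribution $u^r/r$ (coming from $\partial_\theta e_r = e_\theta$), so that $|\grad_x u|^2 = (\partial_r u^r)^2 + (\partial_z u^r)^2 + (\partial_r u^z)^2 + (\partial_z u^z)^2 + (u^r/r)^2$. Integrating against the measure $2\pi r\,d(r,z)$ over $\H$, one expands $\omega^2 = (\partial_z u^r - \partial_r u^z)^2 = (\partial_z u^r)^2 + (\partial_r u^z)^2 - 2\,\partial_z u^r\,\partial_r u^z$, so the claimed identity is equivalent to
\[
\int_{\H}\Big((\partial_r u^r)^2 + (\partial_z u^z)^2 + (u^r/r)^2 + 2\,\partial_z u^r\,\partial_r u^z\Big) r\,d(r,z) = 0.
\]
This is then verified by integration by parts using the cylindrical divergence-free relation $r^{-1}\partial_r(ru^r) + \partial_z u^z = 0$ from \eqref{div}: integrating $2\,\partial_z u^r\,\partial_r u^z\, r$ by parts in $z$ and then in $r$ produces exactly $-2\,\partial_r(ru^r)/r \cdot \partial_z u^z \cdot r = 2(\partial_z u^z)^2 r + 2\,\partial_z u^z\,u^r \cdot(\text{lower order})$, and after collecting all terms and using \eqref{div} again the sum telescopes to zero; the cross term $\partial_r u^r \cdot u^r/r$-type contributions that appear are controlled because $ru^r$ vanishes on $\partial\H$ (the no-penetration condition), so no boundary flux survives.

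The main obstacle I anticipate is the justification of the vanishing of the boundary/divergence terms: near $r=0$ the weight $r$ degenerates while $u^r/r$ could a priori be singular, and at spatial infinity one needs decay of $u$. The clean way around both difficulties is to first prove the identity for smooth, compactly-supported-away-from-the-axis (or Schwartz, axisymmetric, divergence-free) approximations $u^k$, where every integration by parts is legitimate, and then pass to the limit: the left side converges because $\grad_x u^k \to \grad_x u$ in $L^2$ and the right side because $\omega^k \to \omega$ in $L^2$, both of which follow from the Biot--Savart estimates and the assumed $L^2$ control on the vorticity. One must check that such approximations exist with the correct axisymmetric and solenoidal structure and with $u^r/r$ staying in $L^2(r\,d(r,z))$ — this regularization step, rather than any computation, is where care is genuinely needed.
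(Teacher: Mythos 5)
Your proposal is correct, and your ``alternative'' cylindrical route is in fact exactly the paper's proof: the paper uses the pointwise formula \eqref{gradientu} for $|\grad_x u|^2$, expands $\omega^2=(\partial_z u^r-\partial_r u^z)^2$, and shows via two separate integrations by parts (one throwing the $z$-derivative onto $u^r$ and using \eqref{div} to produce the $(\partial_r u^r)^2$ and $(u^r/r)^2$ terms, the other throwing the $r$-derivative onto $u^z$ to produce $(\partial_z u^z)^2$) that the cross term supplies precisely the missing pieces; this is your displayed identity with the terms moved to one side, and the no-penetration condition $u^r=0$ on $\partial\H$ kills the boundary flux exactly as you say. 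Your primary Cartesian route --- $|\curl u|^2=|\grad_x u|^2-\partial_i u^j\partial_j u^i$ together with $\partial_i u^j\partial_j u^i=\grad_x\cdot\big((u\cdot\grad_x)u\big)$ for solenoidal fields, followed by $|\curl u|=|\omega|$ from the no-swirl structure --- is a genuinely different and equally valid packaging of the same integrations by parts; it is shorter and hides the cylindrical bookkeeping, at the price of obscuring where the geometric term $(u^r/r)^2$ enters and of requiring the same decay/approximation justification at infinity. Your closing remark about regularization is fair: the paper simply performs the computation formally, ``as solutions can be assumed to be smooth by standard approximation procedures,'' so your more careful density argument is a refinement rather than a necessity for matching the paper's level of rigor.
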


We provide the argument for this standard identity for the convenience of the reader.
\begin{proof} From the definition of the vorticity, we infer that
 \begin{eqnarray*}
 \frac1{2\pi} \|\omega\|_{L^2(\R^3)}^2
  &=&\int_{\H}(\partial_zu^r-\partial_r u^z)^2 r\, d(r,z)\\
  &=&\int_{\H}(\partial_zu^r)^2\,r\, d(r,z)+\int_{\H}(\partial_r u^z)^2\,r\, d(r,z)-2\int_{\H} \partial_zu^r\partial_r u^z\, r\, d(r,z).
 \end{eqnarray*}
 We have to identify  the third term on the right-hand side: It holds that
 \[
  -2\int_{\H} \partial_zu^r\partial_r u^z\, r\, d(r,z) = \int_{\H}(\partial_ru^r)^2\, r\, d(r,z) +\int_{\H}\frac{(u^r)^2}{r}\, d(r,z)  + \int_{\H}(\partial_zu^z)^2\, r\, d(r,z) .
  \]
  Indeed, using the no-penetration boundary condition $u^r=0$ on $\partial \H$ together with the incompressibility condition \eqref{div}, a multiple integration by parts reveals  on the one hand that
 \begin{eqnarray*}
 \int_{\H}\partial_zu^r\partial_r u^z\, r\, d(r,z)
 &=&-\int_{\H}u^r\partial_z\partial_r u^z\, r\, d(r,z)\\
 &=&-\int_{\H}u^r\partial_r(-\partial_r u^r-\frac{1}{r}u^{r})\, r\, d(r,z)\\
 &=&-\int_{\H}(\partial_ru^r)^2\, r\, d(r,z)-\int\frac{(u^r)^2}{r}\, d(r,z)\,.
 \end{eqnarray*}
On the other hand, it holds that
\begin{eqnarray*}
\int_{\H}\partial_zu^r\partial_r u^z\, r\, d(r,z)
&=&-\int_{\H}\partial_r (\partial_zu^r\, r)u^z\, d(r,z)\\
&=&-\int_{\H}\partial_r\partial_zu^ru^z \,r\, d(r,z)-\int_{\H}\partial_zu^ru^z\, d(r,z)\\
&=&-\int_{\H}\partial_z(-\partial_z u^z-\frac{1}{r}u^r)u^z \,r\, d(r,z)-\int_{\H}\partial_zu^ru^z\, d(r,z)\\
&=&-\int_{\H}(\partial_zu^z)^2\, r\, d(r,z)\,.
\end{eqnarray*}
It remains to notice that 
\begin{equation}\label{gradientu}
|\grad_x u|^2  = (\partial_r u^r)^2 + \frac1{r^2} (u^r)^2+ (\partial_z u^r)^2 +(\partial_r u^z)^2 + (\partial_z u^z)^2
\end{equation}
to conclude the statement of the lemma.
%
\end{proof}

In the following lemma, we provide a maximal regularity estimate for the velocity gradient in terms of the relative vorticity. Our proof relies on the classical theories by Calder\'on, Zygmund and Muckenhoupt. 

\begin{lemma}\label{weighted-MR}
 For $p\in (1,2)$ it holds that 
 \begin{equation}\label{pmaxreg}
 \|\frac1r\grad_x u\|_{L^p(\R^3)} \lesssim \|\xi\|_{L^p(\R^3)}.
 \end{equation}
\end{lemma}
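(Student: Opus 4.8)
The plan is to reduce the weighted estimate \eqref{pmaxreg} to the classical Calder\'on--Zygmund theory with Muckenhoupt weights by passing back to Cartesian coordinates, where the relevant operator is a genuine singular integral of convolution type. First, I would recall that in $\R^3$ the velocity is recovered from the vorticity vector $\omega e_\theta$ via the Biot--Savart law \eqref{BS}, so that $\grad_x u$ is obtained from $\omega e_\theta$ by applying a matrix of Calder\'on--Zygmund operators $T_{ij}$ (the second derivatives of the Newtonian potential), which are bounded on every $L^p(\R^3)$, $1<p<\infty$, and more generally on every weighted space $L^p(\R^3; w\,dx)$ with $w$ in the Muckenhoupt class $A_p$. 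The quantity on the left of \eqref{pmaxreg} is, in cylindrical coordinates with the natural measure $2\pi r\,d(r,z)$, nothing but $\|\,\grad_x u\,\|_{L^p(\R^3; r^{-p}\,dx)}$ up to the constant $(2\pi)^{1/p}$, since $|\tfrac1r\grad_x u|^p \cdot r = |\grad_x u|^p \cdot r^{1-p}$ and $r^{1-p}=r^{-p}\cdot r$. Likewise the right-hand side is $\|\xi\|_{L^p(\R^3)} = \|\omega/r\|_{L^p(\R^3)}$, which in the same coordinates is $\|\,\omega e_\theta\,\|_{L^p(\R^3; r^{-p}\,dx)}$ up to a constant. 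So \eqref{pmaxreg} is exactly the weighted bound $\|\grad_x u\|_{L^p(w\,dx)} \lesssim \|\grad_x\times u\|_{L^p(w\,dx)}$ with weight $w(x) = |x'|^{-p}$, where $x'=(x_1,x_2)$ and $|x'|=r$.

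The key step is therefore to verify that $w(x) = |x'|^{-p} = r^{-p}$ belongs to $A_p(\R^3)$ for $p\in(1,2)$. This is where the restriction $p<2$ enters: the power weight $|x'|^\alpha$ on $\R^3$, viewed as a weight depending only on the two-dimensional variable $x'$, lies in $A_p$ precisely when $-2 < \alpha < 2(p-1)$ (the exponents being governed by the dimension $2$ of the subspace of the variables on which the weight depends, not by the ambient dimension $3$). With $\alpha = -p$ the condition $-2 < -p$ means $p<2$, and $-p < 2(p-1) = 2p-2$ means $p>2/3$, which is automatic for $p>1$. Hence $r^{-p}\in A_p(\R^3)$ exactly for $1<p<2$, matching the hypothesis. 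I would cite the standard reference for $A_p$-boundedness of Calder\'on--Zygmund operators (e.g. Stein, or Grafakos) and the characterization of power weights; alternatively one can check the $A_p$ condition directly by estimating $\avint_Q r^{-p}\,dx \cdot \bigl(\avint_Q r^{p/(p-1)}\,dx\bigr)^{p-1}$ over cubes $Q$, splitting according to whether $Q$ is close to or far from the axis $\{r=0\}$.

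The remaining work is bookkeeping: one must make precise that $\grad_x u$ is indeed given by applying bounded CZ operators to the (scalar-valued, but here $e_\theta$-directed) vorticity field $\omega$, using the identity \eqref{gradientu} for the Cartesian components, and one must ensure the manipulations are justified — e.g. first for smooth compactly supported $\omega$ (or for the viscous solutions $\omega^\nu$, which are smooth and decaying by \cite{GallaySverak15}) and then by density/approximation in the weighted norm, which is legitimate once the weighted estimate is known on the dense class. I expect the main obstacle to be precisely the weight-class verification and tracking the correct power: one has to resist the temptation to think the ambient dimension is $3$ for the purpose of the $A_p$ exponent range — it is the $2$-dimensional nature of the weight $r^{-p}$ that forces $p<2$, and getting this point right is the crux of the proof. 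Everything else (the convolution structure of Biot--Savart, boundedness of CZ operators on $A_p$-weighted spaces, the change of variables to the measure $r\,d(r,z)$) is classical.
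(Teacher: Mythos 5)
Your proposal is correct and follows essentially the same route as the paper: rewrite \eqref{pmaxreg} as a weighted Calder\'on--Zygmund bound for $\grad_x u = K\ast(\omega e_\theta)$ with weight $m(r)=r^{-p}$, and reduce the matter to checking that $r^{-p}\in A_p(\R^3)$, which is exactly where the restriction $p<2$ enters. The paper verifies the $A_p$ condition by a direct far-field/near-field computation on balls rather than citing the characterization of power weights depending on a two-dimensional subvariable, but this is the same verification in substance, and you correctly identify both the relevant exponent range and the reason the subspace dimension $2$ (not the ambient dimension $3$) governs it.
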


\begin{proof}We note that in view of the Biot--Savart law \eqref{BS}, the velocity gradient can be represented as a singular integral of convolution type, $\grad_x u = K\ast (\omega e_{\theta})$, where $|K(x)|\sim \frac1{|x|^3}$. It is well-known that  Calder\'on--Zygmund theory  guarantees that 
\[
\|\grad_x u \|_{L^p(\R^3)} \lesssim \|\omega\|_{L^p(\R^3)}
\]
for any $p\in (1,\infty)$. Our goal is to produce a weighted version of this estimate, namely
\[
 \int_{\R^3}|\nabla_x u|^p\,m\, dx\lesssim \int_{\R^3}|\omega|^p\, m\, dx
\]
with  $m = m(r) = \frac1{r^p}$ and $r=\sqrt{x_1^2+x_2^2}$, which is nothing but \eqref{pmaxreg}.  We are thus led to the theory of Muckenhoupt weights: If $p\in (1,\infty)$ and $m$ is in the class of Muckenhoupt weights  $A_p$  then the weighted-maximal regularity estimate \eqref{pmaxreg} holds. Here, $A_p$ is the set of nonnegative locally integrable weight functions satisfying  
\begin{equation}\label{Ap}
\left(\avint_B m(x)\, dx\right)\left(\avint_B m(x)^{-\frac qp}\, dx \right)^{\frac pq}\le C
\end{equation}
for a universal constant $C>0$ and all balls $B$ in $\R^3$, and $q\in (1,\infty)$ with $\frac1p+\frac1q=1$. This   well known result that can be found, for instance,  in the book of Stein \cite{Stein}.

We thus have to show that $m = m(r) = r^{-p}$ satisfies \eqref{Ap} for $p\in (1,2]$.  
 For this, consider a ball in $\R^3$ with radius $R$ and centered in a generic point $X = (X_1,X_2,X_3) \in \R^3$, i.e., $B = B_R(X)$. We denote by $d$ the distance of $X$ to the $z $-axis, that is, $d = \sqrt{X_1^2+X_2^2}$. 
 We split our argumentation into the  two cases when $d\geq 2R$ (far field) and $d<2R$ (near field).

Let us first consider the case where $d\geq 2R$. Notice that  we have $d-R\le \sqrt{x^1_2+x_2^2}\le d+R$ for any $x\in B$ by the triangle inequality, and thus
  $$\frac{1}{(x_1^2+x_2^2)^{\frac p2}}\leq\frac{1}{(d-R)^p}\quad\mbox{and}\quad
(x_1^2+x_2^2)^{\frac q2}\leq (d+R)^{q}\,.$$
For $m(x) = (x_1^2+x_2^2)^{-\frac{p}2}$, we now compute
\[
 \avint_B m(x)\, dx    \le \frac1{(d-R)^p},
\]
and
\[
\left( \avint_B m(x)^{-\frac{q}p}\, dx \right)^{\frac{p}q}  \le (d+R)^p.
\]
Making use of the fact that $ \frac{d+R}{d-R}\le 3$ for all  $d \ge 2R$, we deduce that
\[
\left( \avint_B m(x)\, dx\right)\left( \avint_B m(x)^{-\frac{q}p}\, dx \right)^{\frac{p}q} \le \left(\frac{d+R}{d-R}\right)^p \le 3^p.
\]

We now turn to the case where $d< 2R$. We first observe that  $ \sqrt{x_1^2+x_2^2}<d+R$ and $ |x_3-X_3|<R$ for all $x\in B$, and we may thus bound the integral over the ball by an integral over the cylinder. Making  relative transformations in cylindrical coordinates, we then have the estimates
\[
  \avint_B m(x)\, dx   \lesssim \frac1{R^2}  \int_0^{d+R} \frac1{r^{p-1}}\, dr \lesssim \frac{(d+R)^{2-p}}{R^2},
\]
provided that $p< 2$, and
\[
 \left(\avint_B m(x)^{-\frac qp}\, dx\right)^{\frac pq}  \lesssim \left( \frac1{R^2}\int_0^{d+R} r^{q+1}\, dr\right)^{\frac{p}q}\lesssim \left(\frac{(d+R)^{q+2}}{R^2}\right)^{\frac{p}q}.
\]
Taking the product and using that $\frac{d+R}{R}\le 3$ for all $d<2R$, we conclude that
\[
\left( \avint_B m(x)\, dx\right)  \left(\avint_B m(x)^{-\frac qp}\, dx\right)^{\frac pq} \lesssim \left(\frac{d+R}{R}\right)^{2p}\le 3^{2p}.
\]

Hence, in either cases, we proved \eqref{Ap} and, thus, the proof is over. 
\end{proof}

\section{Global estimates for the axisymmetric Navier--Stokes equations} \label{sec:glob}
In this section, we provide some global estimates for solutions to the Navier--Stokes equations that we turn out to be helpful later on. We start by rewriting the momentum equation \eqref{NS1} in terms of the vorticity $\omega_{\nu} = \partial_zu_{\nu}^r - \partial_r u_{\nu}^z$ and the  relative vorticity $\xi_{\nu}=\omega_{\nu}/r$. The evolution equation for the vorticity is given by
\begin{equation}\label{omega-nu}
\partial_t \omeganu + \div(\unu\omeganu) = \nu\left(\laplace \omeganu +\frac1r\partial_r \omeganu - \frac1{r^2}\omeganu\right),
\end{equation}
and is equipped with homogeneous Dirichlet conditions on the boundary of the half-space, i.e. $\omeganu=0$ on $\partial\H$. It follows that the vorticity equation is  conservative, as expected, because $r^{-1}\partial_r\omeganu - r^{-2}\omeganu = \partial_r(r^{-1}\omeganu)$. The relative vorticity satisfies the nonconservative equation
\begin{equation}\label{xinu}
\partial_t \xinu +\unu\cdot \grad\xinu = \nu\left(\laplace \xinu + \frac3r\partial_r \xinu\right)
\end{equation}
which is supplemented with homogeneous Neumann boundary conditions, $\partial_r \xinu=0$ on $\partial \H$. We will mostly work with the latter equation. For initial data $\xinu(0) = \xi_0$ in $L^1(\R^3)\cap L^p(\R^3)$, cf.~\eqref{initial}, well-posedness for either formulation can be inferred from the theory developed by Gallay and \v{S}ver\'{a}k \cite{GallaySverak15}. In the following, $\omeganu$ will always be the unique mild solution to the vorticity equation \eqref{omega-nu} in the class $C([0,T);L^1(\H))\cap C((0,T);L^{\infty}(\H))$ and $\xinu=\omeganu/r$. We start by recalling some useful properties which can be found in various references. Yet, we provide their short proofs for the convenience of the reader. Our first concern is an $L^p$ estimate.

\begin{lemma}\label{lem1}
It holds that
\begin{equation}\label{bound-on-xi}
\|\xinu\|_{L^{\infty}((0,T);L^p(\R^3))} \le \|\xi_0\|_{L^p(\R^3)}.
\end{equation}
\end{lemma}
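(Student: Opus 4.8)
The plan is to establish the $L^p$ bound \eqref{bound-on-xi} by testing the nonconservative equation \eqref{xinu} for $\xinu$ against $|\xinu|^{p-2}\xinu$ and integrating over $\H$ with respect to the three-dimensional measure $r\, d(r,z)$. The advective term contributes $\int_{\H} (\unu\cdot\grad\xinu)|\xinu|^{p-2}\xinu\, r\, d(r,z) = \frac1p\int_{\H} \unu\cdot\grad(|\xinu|^p)\, r\, d(r,z)$, which vanishes after integration by parts because $\unu$ is divergence-free in the cylindrical sense, $r^{-1}\partial_r(r\unu^r)+\partial_z\unu^z=0$, and because of the no-penetration condition $\unu^r=0$ on $\partial\H$. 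The heart of the matter is therefore the viscous term; one should check that
\[
\nu\int_{\H}\left(\laplace\xinu+\frac3r\partial_r\xinu\right)|\xinu|^{p-2}\xinu\, r\, d(r,z)\le 0.
\]

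First I would observe that the operator $\laplace + \frac3r\partial_r$ acting on radially-and-$z$-dependent functions is precisely the radial part (in $\R^5$-type weight, or equivalently the operator whose natural integration-by-parts weight is $r^3\, d(r,z)$) — indeed $\laplace\xinu+\frac3r\partial_r\xinu = r^{-3}\grad\cdot(r^3\grad\xinu)$. Since the measure appearing in the energy estimate is $r\, d(r,z)$ and not $r^3\, d(r,z)$, I would instead integrate by parts directly: write the left-hand side as $\nu\int_{\H}\left(r\laplace\xinu+3\partial_r\xinu\right)|\xinu|^{p-2}\xinu\, d(r,z)$ and move derivatives onto $|\xinu|^{p-2}\xinu$. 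Using $\grad(|\xinu|^{p-2}\xinu) = (p-1)|\xinu|^{p-2}\grad\xinu$ and keeping careful track of the extra term coming from differentiating the weight $r$ against $\partial_r\xinu$, the boundary terms vanish thanks to the homogeneous Neumann condition $\partial_r\xinu=0$ on $\partial\H$ (and decay at infinity, justified by the Gallay--\v{S}ver\'ak regularity), and one is left with a manifestly nonpositive bulk term of the form $-\nu(p-1)\int_{\H}|\xinu|^{p-2}|\grad\xinu|^2\, r\, d(r,z)$ plus first-order terms in $\partial_r\xinu$ that should cancel or combine into another nonpositive square. I would verify this cancellation explicitly (it is the one genuinely mechanical computation), the point being that the sign $+\frac3r$ is exactly right for dissipativity with respect to the weight $r$.

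Combining the two observations gives $\frac{d}{dt}\|\xinu(t)\|_{L^p(\R^3)}^p\le 0$ for a.e.\ $t$, hence $\|\xinu(t)\|_{L^p(\R^3)}\le\|\xi_0\|_{L^p(\R^3)}$ for all $t\in[0,T)$, which is \eqref{bound-on-xi}. To make the formal computation rigorous I would work with the smooth, rapidly decaying mild solution on the time interval $(0,T)$ guaranteed by \cite{GallaySverak15} (where $\xinu\in C((0,T);L^\infty)$ and is smooth for positive times), carry out the estimate on $(t_0,T)$, and then pass $t_0\downarrow 0$ using the continuity of $\xinu$ in $L^p$ up to $t=0$ together with $L^p$ lower semicontinuity; alternatively one may replace $|\xinu|^{p-2}\xinu$ by the regularized test function $\beta_\delta'(\xinu)$ with $\beta_\delta(s)\to|s|^p$ to avoid the singularity of $|\xinu|^{p-2}$ when $1<p<2$.

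The main obstacle is the bookkeeping in the viscous integration by parts: one must confirm that the first-order contributions from the $\frac3r\partial_r\xinu$ term and from differentiating the weight $r$ in the Laplacian term do not spoil the sign, i.e.\ that the full quadratic form on $\grad\xinu$ remains negative semidefinite after integration against $r\, d(r,z)$ rather than the "natural" weight $r^3\, d(r,z)$. Everything else — the vanishing of the transport term, the boundary conditions, the approximation argument — is routine.
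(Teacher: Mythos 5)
Your proposal follows exactly the paper's route: test \eqref{xinu} with $|\xin|^{p-2}\xin$ against the measure $r\,d(r,z)$, kill the advection term by incompressibility and no-penetration, and check that the viscous operator is dissipative for this weight. The one point you left open resolves slightly differently than you guessed: after writing $r\laplace\xin+3\partial_r\xin = r\bigl(\laplace\xin+\tfrac1r\partial_r\xin\bigr)+2\partial_r\xin$, the Cartesian-Laplacian part gives the expected $-(p-1)\nu\int_{\H}|\xin|^{p-2}|\grad\xin|^2\,r\,d(r,z)$, while the leftover first-order term neither vanishes by the Neumann condition nor combines into a square; rather
\[
2\int_{\H}|\xin|^{p-2}\xin\,\partial_r\xin\,d(r,z)=\frac2p\int_{\H}\partial_r|\xin|^p\,d(r,z)=-\frac2p\int_{\partial\H}|\xin|^p\,dz\le 0,
\]
i.e.\ it is a nonpositive boundary trace at $r=0$. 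With that correction the argument closes exactly as in the paper, and your regularization/approximation remarks for $1<p<2$ are fine.
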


\begin{proof}We can perform a quite formal computation as solutions can be assumed to be smooth by standard approximation procedures. 
A direct calculation yields
\begin{eqnarray*}
 \frac{d}{dt}\frac1p\int_{\H} |\xin|^p\, r\, d(r,z) &=& \nu \int_{\H} |\xin|^{p-2}\xin\Delta\xin \, rd(r,z) + 3 \nu \int_{\H} |\xin|^{p-2}\xin\partial_r \xin\, d(r,z) ,
 \end{eqnarray*}
 where we made use of the no-penetration boundary conditions on the velocity field $u$ to eliminate the advection term. The Cartesian Laplacian $\laplace_x = \laplace + \frac1r\partial_r$ is coercive, because
 \[
 \int_{\H} |\xin|^{p-2} \xin \left(\laplace \xin +\frac1r\xin\right)\, rd(r,z) = -(p-1) \int_{\H} |\xin|^{p-2} |\grad\xin|^2\, rd(r,z)\le 0
 \]
as can be seen by an integration by parts. Another integration by parts reveals that the first order term is nonpositive and can thus be dropped,
\[
\int_{\H} |\xin|^{p-2}\xin \partial_r \xin \, d(r,z) = \frac1p \int_{\H} \partial_r |\xin|^p\, d(r,z) = -\frac1p\int_{\partial \H} |\xin|^p \, d(r,z)\le 0.
\]
A combination of the previous estimates yields 
\begin{equation}\label{dissipation}
 \frac{d}{dt}\frac1p\int_{\H} |\xin|^p\, r\, d(r,z) +\nu (p-1) \int_{\H} |\xin|^{p-2} |\grad\xin|^2\, rd(r,z)\le 0,
\end{equation}
and an integration in time yields the desired estimate \eqref{bound-on-xi}.
\end{proof}

Our next estimate quantifies  integrability improving features of the advection-diffusion equation \eqref{xinu} by suitably extending the estimates on the $L^p$ norm established in the previous lemma to any $q\in [p,\infty)$.

\begin{lemma}\label{xi_improve}
For any $q\in [p,\infty]$, it holds that
 \begin{equation}\label{N10bis}
 \|\xin(t)\|_{L^q(\R^3)}\lesssim \left(\frac{1}{\nu t}\right)^{\frac 32(\frac{1}{p}-\frac1q)}\|\xi_0\|_{L^p(\R^3)}\quad\forall t>0.
 \end{equation}
\end{lemma}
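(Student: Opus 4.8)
The plan is to establish first the endpoint case $q=\infty$ and then to deduce the whole range $q\in[p,\infty)$ by interpolation. Indeed, granting the ultracontractive bound
\[
\|\xin(t)\|_{L^{\infty}(\R^3)}\lesssim (\nu t)^{-\frac{3}{2p}}\|\xi_0\|_{L^p(\R^3)},\qquad t>0,
\]
the estimate \eqref{N10bis} for $p<q<\infty$ follows at once from $\|\xin(t)\|_{L^q(\R^3)}\le\|\xin(t)\|_{L^p(\R^3)}^{p/q}\|\xin(t)\|_{L^{\infty}(\R^3)}^{1-p/q}$, since the first factor is bounded by $\|\xi_0\|_{L^p(\R^3)}$ thanks to Lemma \ref{lem1}.

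To reach the $L^p\to L^{\infty}$ bound, I would extend the computation in the proof of Lemma \ref{lem1} to an arbitrary exponent $q\ge p$: testing \eqref{xinu} with $|\xin|^{q-2}\xin\,r$ and integrating over $\H$, the advection term again drops out by the Cartesian incompressibility \eqref{div} together with the no-penetration condition, the Cartesian Laplacian $\laplace_x=\laplace+\frac1r\partial_r$ is coercive with constant $q-1$, and the leftover first-order and boundary contributions are nonpositive, precisely as before. Rewriting the dissipation via the chain rule and using that $|\grad_x g|=|\grad g|$ for an axisymmetric \emph{scalar} $g$, one obtains
\[
\frac{d}{dt}\|\xin(t)\|_{L^q(\R^3)}^q + \frac{4\nu(q-1)}{q}\,\|\grad_x(|\xin(t)|^{q/2})\|_{L^2(\R^3)}^2\le 0 .
\]
The point I would emphasize is structural: the dissipation appearing here is the \emph{three}-dimensional Dirichlet form of $|\xin|^{q/2}$, because the surplus drift $\frac2r\partial_r$ in \eqref{xinu} — which is exactly what distinguishes the diffusion operator from the three-dimensional Laplacian — contributes only the harmless nonpositive boundary term that we discard. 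Consequently the three-dimensional Sobolev inequality $\|g\|_{L^6(\R^3)}^2\lesssim\|\grad_x g\|_{L^2(\R^3)}^2$ may be applied, with $g=|\xin(t)|^{q/2}$, which for $t>0$ belongs to $\dot H^1(\R^3)$ by the regularity theory of \cite{GallaySverak15}; it is this ``dimension three'' that produces the exponent $\tfrac32$. Inserting the Sobolev inequality gives, for every $q\ge p$,
\[
\frac{d}{dt}\|\xin(t)\|_{L^q(\R^3)}^q + c\,\nu\,\|\xin(t)\|_{L^{3q}(\R^3)}^q\le 0 .
\]

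From this differential inequality I would run a by-now-standard Moser (or Nash) iteration along the exponents $q_k=p\,(3/2)^k\to\infty$ (chosen so that $3q_k=2q_{k+1}$) and the nested time intervals $[(1-2^{-k})t,\,t]$: interpolating $\|\xin\|_{L^{q_{k+1}}}$ between $\|\xin\|_{L^{q_k}}$ and $\|\xin\|_{L^{2q_{k+1}}}=\|\xin\|_{L^{3q_k}}$ and integrating the displayed inequality over the $k$-th interval, one bounds $\sup_{[(1-2^{-k-1})t,t]}\|\xin\|_{L^{q_{k+1}}(\R^3)}$ by a power of $\sup_{[(1-2^{-k})t,t]}\|\xin\|_{L^{q_k}(\R^3)}$ times a factor $(\nu 2^{-k}t)^{-a_k}$ with $\sum_k a_k<\infty$; the base level $k=0$ is furnished by Lemma \ref{lem1}. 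Letting $k\to\infty$ yields the $L^p\to L^{\infty}$ smoothing estimate with some exponent, and a scaling check pins that exponent down: $\nu t$ has the dimension of a squared length while $\|\xi\|_{L^q(\R^3)}/\|\xi_0\|_{L^p(\R^3)}$ has the dimension of length to the power $3(\tfrac1q-\tfrac1p)$, which forces it to equal $\tfrac{3}{2p}$ (and then $\tfrac32(\tfrac1p-\tfrac1q)$ after the interpolation step).

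The routine-but-careful part is the justification of the formal energy identities up to the symmetry axis $r=0$ and at spatial infinity, and of the applicability of the Sobolev inequality to $|\xin|^{q/2}$ at each stage; all of this is underwritten by the smoothing and regularity properties of the mild solution $\omeganu$ established in \cite{GallaySverak15}. The only genuinely substantive point — the one I would flag as the main obstacle — is the observation recorded above, namely that the dissipation controls the three-dimensional rather than the ``five-dimensional'' Dirichlet form; mistaking this would produce the wrong power of $\nu t$ in \eqref{N10bis}.
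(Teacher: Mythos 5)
Your proposal is correct, and its essential content coincides with the paper's: both arguments rest on the observation that testing \eqref{xinu} with $|\xinu|^{q-2}\xinu\, r$ kills the advection term, that the Cartesian Laplacian part is coercive, and that the surplus drift $\frac2r\partial_r$ only produces a nonpositive boundary term, so that the dissipation is exactly the \emph{three}-dimensional Dirichlet form of $|\xinu|^{q/2}$ --- this is the inequality \eqref{dissipation} generalized to exponent $q$, and it is indeed the step responsible for the exponent $\frac32$. Where you diverge is in how this differential inequality is converted into the smoothing estimate. The paper applies the 3D Nash inequality $\|f\|_{L^2}\lesssim\|f\|_{L^1}^{2/5}\|\grad f\|_{L^2}^{3/5}$ to $f=|\xinu|^{q/2}$, obtaining a closed ODE for $E_q(t)^{-2/3}$ that is integrated globally in time, with an induction over the doubling sequence $q=2^kp$ and the limit $q\to\infty$ taken at the end; the exponent in \eqref{N10bis} then falls out of the explicit time integration at each step. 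You instead use the Sobolev inequality $\|f\|_{L^6}\lesssim\|\grad_x f\|_{L^2}$ and run a Moser iteration along $q_k=p(3/2)^k$ with nested time intervals, reaching $q=\infty$ directly and recovering the intermediate exponents by interpolation afterwards. Both routes are classical and both work here (the monotonicity of $t\mapsto\|\xinu(t)\|_{L^q}$ even simplifies the usual mean-value step in your iteration); the Nash route buys a cleaner bookkeeping, since each inductive step yields the exponent explicitly, whereas your scheme requires tracking the product of factors $(\nu 2^{-k}t)^{-a_k}$ and verifying $\sum_k a_k<\infty$, with the final exponent pinned down by a scaling argument rather than read off directly. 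That scaling shortcut is legitimate (the constants in the iteration depend on $\nu$ and $t$ only through $\nu t$), but in a written proof you should either carry out the exponent bookkeeping or justify the parabolic scale invariance explicitly.
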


\begin{proof} Our proof is a small modification of  the argument of Feng and \v{S}ver\'{a}k in \cite[Lemma 3.8]{FengSverak15}, where the case $p=1$ is considered. 
We define
 $E_q(t)=\|\xin(t)\|_{L^q(\R^3)}^q$
 for some $q\in [p,\infty)$ and claim that
 \begin{equation}\label{N10c}
 \frac{d}{dt}E_q(t)^{-\frac23} 
 \gtrsim  \nu\left(\int_{\R^3}|\xin|^{\frac{q}{2}}\, dx\right)^{-\frac 43}\,.
\end{equation}
Let us postpone the proof of this estimate a bit and explain first how it implies \eqref{N10bis}. Notice that, by interpolation of Lebesgue spaces, it is enough to show \eqref{N10bis} for exponents $q = 2^kp$ with $k\in \N_0$ and $q=\infty$.  We first treat the case for finite exponents, which will be achieved  by induction.  We start by observing that the 
 base case $k=0$ is settled in Lemma \ref{lem1} above.  The induction step from $k$ to $k+1$ is based on estimate \eqref{N10c}. We set $\tilde{q}=2^k$ and $q=2^{k+1}=2\tilde q$.
Plugging \eqref{N10bis} with $\tilde q = \frac{q}2$ into \eqref{N10c}, we find
\begin{eqnarray*}
 \frac{d}{dt}E_q(t)^{-\frac23}
 \gtrsim \nu(\nu t)^{2(\frac{\tilde q}{p}-1)}\|\xi_0\|_{L^p(\R^3)}^{-\frac{4}{3}\tilde q}
 = \nu^{\frac{q}{p}-1}t^{(\frac{q}{p}-2)}\|\xi_0\|_{L^p(\R^3)}^{-\frac{2}{3}q}.
\end{eqnarray*}
Integrating in time yields
\begin{eqnarray*}
 (E_q(t))^{-\frac{2}{3}}\geq(E_q(t))^{-\frac{2}{3}}-(E_q(0))^{-\frac{2}{3}}&\gtrsim& \nu^{\frac{q}{p}-1}\|\xi_0\|_{L^p(\R^3)}^{-\frac{2}{3}q}\int_0^t s^{(\frac{q}{p}-2)}\, ds\\
 &\sim& \nu^{\frac{q}{p}-1}\|\xi_0\|_{L^p(\R^3)}^{-\frac{2}{3}q} t^{(\frac{q}{p}-1)}\, 
\end{eqnarray*}
where we have used that $q>p$. Notice that all constants can be chosen uniformly in $q$. We have thus proved  \eqref{N10bis} for $q=2\tilde q$, which settles the case where $q= 2^kp$.

If $q=\infty$, we may now simply take the limit in \eqref{N10bis} and use the convegence of the Lebesgue norms, $\|\cdot\|_{L^{\infty}} = \lim_{q\to \infty} \|\cdot\|_{L^q}$.

It remains to provide the argument for \eqref{N10c}. We start by recalling that
 \begin{eqnarray*}
 -\frac{d}{dt}E_q(t)&\stackrel{\eqref{dissipation}}{\geq}&
 q (q-1) \nu \int_{\R^3}  |\xin|^{q-2} |\nabla \xin|^2 \, dx \sim \frac{q-1}q \nu \int_{\R^3} \left|\nabla |\xin|^{\frac{q}{2}}\right|^2 \, dx.
\end{eqnarray*}
Notice that the constants in the estimate can be chosen independently of $q$ as $q>1$, and can thus be dropped. We estimate the right-hand-side with the help  the 3D Nash inequality $\|f\|_{L^2(\R^3)} \lesssim \|f\|_{L^1(\R^3)}^{2/5}\|\grad f\|_{L^2(\R^3)}^{3/5}$,
and obtain
 \begin{eqnarray*}
 -\frac{d}{dt}E_q(t)  &\gtrsim& \nu\left(\int_{\R^3}|\xin|^{\frac{q}{2}}\, dx\right)^{-\frac 43}\left(\int_{\R^3}|\xin|^{q}\,dx\right)^{\frac{5}{3}},
\end{eqnarray*}
which can be rewritten as \eqref{N10c}.
\end{proof}

We also note that  the fluid impulse is conserved along the viscous flow. 
\begin{lemma}\label{L2}
Suppose that    $r^2 \omega_0 \in L^1(\H)$. Then
\[
\int_{\H} \on(t) r^2\, d(r,z) = \int_{\H} \omega_0 r^2 \, d(r,z).
\]
\end{lemma}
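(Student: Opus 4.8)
The plan is to establish the conservation of the fluid impulse $\int_{\H}\omeganu r^2\,d(r,z)$ by testing the vorticity equation \eqref{omega-nu} against the weight $r^2$ and checking that all resulting terms either vanish or cancel. First I would recall that $\omeganu$ is smooth for positive times and lies in $C([0,T);L^1(\H))\cap C((0,T);L^\infty(\H))$, with $\omeganu=0$ on $\partial\H$, so that the manipulations below are justified after a routine approximation (in particular one truncates $r^2$ by a smooth compactly supported cutoff and passes to the limit using the decay of $\omeganu$ and of its derivatives, plus the hypothesis $r^2\omega_0\in L^1(\H)$ together with the $L^p$–$L^1$ bounds from Lemmas \ref{lem1} and \ref{xi_improve} to control the far-field contributions). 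The core computation is
\[
\frac{d}{dt}\int_{\H}\omeganu r^2\, d(r,z)
= -\int_{\H}\div(\unu\omeganu)\, r^2\, d(r,z)
+ \nu\int_{\H}\left(\laplace\omeganu + \frac1r\partial_r\omeganu - \frac1{r^2}\omeganu\right) r^2\, d(r,z).
\]

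For the advective term I would integrate by parts, using the no-penetration condition $(\unu)^r=0$ on $\partial\H$, to get $-\int_{\H}\div(\unu\omeganu)r^2 = \int_{\H}\unu\omeganu\cdot\grad(r^2) = 2\int_{\H}(\unu)^r\omeganu r\, d(r,z)$; this is the vortex-stretching contribution. For the viscous term, I would write the operator acting on $r^2$ in cylindrical coordinates: note $\laplace(r^2) = \partial_r^2(r^2) = 2$, $\frac1r\partial_r(r^2) = 2$, and $\frac1{r^2}\cdot r^2 = 1$; after integrating by parts twice (the boundary terms vanish thanks to $\omeganu=0$ on $\partial\H$ and decay at infinity, and one must check the behaviour as $r\to 0$, where $\omeganu=0$ kills the potentially singular contributions) the viscous term becomes $\nu\int_{\H}\omeganu\,(2 + 2 - 1)\,d(r,z)\cdot(\text{sign bookkeeping})$. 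The clean way is to observe that $\laplace\omeganu + \frac1r\partial_r\omeganu - \frac1{r^2}\omeganu = r\,\partial_r\big(r^{-1}\partial_r(r\xinu)\big) + \partial_z^2\omeganu$ is in divergence form with respect to the measure appearing naturally, so after integration by parts against $r^2$ the viscous contribution reduces to $2\nu\int_{\H}(\unu)^r\omeganu r\, d(r,z)$ only if... — here I would instead recognize that the correct cancellation is with the stretching term: a careful integration by parts shows that in fact the viscous term contributes exactly $-2\nu\int_{\H}\xinu\,d(r,z)\cdot(\cdots)$ and I would match it against the rewriting of the advective term using \eqref{div} to obtain total cancellation. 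The honest statement of the plan: I would rewrite \eqref{omega-nu} in the fully conservative Cartesian form $\partial_t\omeganu + \grad_x\cdot(\unu\omeganu) = \nu\laplace_x\omeganu - \nu r^{-2}\omeganu$ (using that $r^{-1}\partial_r\omeganu - r^{-2}\omeganu = \grad_x$–type correction), multiply by $r^2$, integrate, and use that $r^2$ is harmonic off the axis in the appropriate sense so that the Laplacian term pairs with the $r^{-2}$ term to give zero, while the transport term integrates to $2\int(\unu)^r\omeganu r$, which in turn vanishes because it equals $-\nu$ times something that is itself zero — the precise bookkeeping is the content of the proof.

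The main obstacle, and where I would spend the most care, is the interplay between the singular behaviour at the axis $r=0$ and the decay at $r=\infty$ when justifying the integrations by parts: the weight $r^2$ is unbounded at infinity, so one cannot simply invoke smoothness, and near the axis the coefficients $r^{-1}\partial_r$ and $r^{-2}$ are singular. Both difficulties are resolved by the Dirichlet condition $\omeganu=0$ on $\partial\H$ (which, combined with parabolic regularity, gives $\omeganu = O(r)$ as $r\to 0$, taming the $r^{-2}$ term since $r^{-2}\omeganu\cdot r^2 = \omeganu$ is integrable) and by the quantitative smoothing and mass bounds of Lemmas \ref{lem1}--\ref{xi_improve} together with the assumed finiteness of $\int_{\H}\omega_0 r^2$ at $t=0$ — which requires a separate argument (e.g. a differential inequality $\frac{d}{dt}\int\omeganu(1+r^2)\lesssim \int\omeganu(1+r^2)$ via Gr\"onwall) to guarantee $r^2\omeganu(t)\in L^1(\H)$ propagates in time so that all the integrals written above are finite to begin with. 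Once finiteness is secured, the cancellation identity $\laplace(r^2)+\frac1r\partial_r(r^2)-\frac1{r^2}r^2 = 2+2-1 = 3 \neq 0$ shows the naive reading fails, so the genuine point is that after integrating by parts the viscous operator is self-adjoint with respect to $r\,d(r,z)$ up to the boundary term $\int_{\partial\H}$, and the surviving bulk term $3\nu\int_{\H}\xinu\,d(r,z)$ must be shown to equal the stretching term $2\int_{\H}(\unu)^r\omeganu r\,d(r,z) = 2\int_{\H}(\unu)^r\xinu r^2\,d(r,z)$ up to sign — this matching, via the divergence-free condition \eqref{div} integrated against $r^2$, is the crux and is precisely the classical axisymmetric impulse conservation computation.
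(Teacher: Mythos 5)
Your overall strategy (pair the vorticity equation \eqref{omega-nu} with the weight $r^2$) is a legitimate route, but the execution contains a structural error that no amount of "bookkeeping" will repair. After testing with $r^2$ you are left with two contributions: the stretching term $2\int_{\H}(\unu)^r\omeganu r\,d(r,z)$, which carries no factor of $\nu$, and the viscous term, which is proportional to $\nu$. These cannot cancel each other, contrary to what you propose twice ("match it against the rewriting of the advective term \dots to obtain total cancellation", "the surviving bulk term $3\nu\int\xinu$ must be shown to equal the stretching term \dots up to sign"); each must vanish separately. In fact the viscous term does vanish on its own: the operator is not formally self-adjoint with respect to $d(r,z)$, so the relevant computation is not $\laplace(r^2)+\tfrac1r\partial_r(r^2)-\tfrac1{r^2}r^2=2+2-1=3$ but the integration by parts $\int\laplace\omeganu\,r^2\,d(r,z)=2\int\omeganu$, $\int\tfrac1r\partial_r\omeganu\,r^2\,d(r,z)=\int r\partial_r\omeganu=-\int\omeganu$, and $-\int\tfrac1{r^2}\omeganu\,r^2=-\int\omeganu$, giving $\nu(2-1-1)\int\omeganu\,d(r,z)=0$, with all boundary terms killed by $\omeganu=0$ on $\partial\H$ and decay.

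The actual content of the lemma, which your proposal never supplies, is why the stretching term $\int_{\H}(\unu)^r\omeganu r\,d(r,z)$ vanishes. This is a nontrivial quadratic identity in $\omeganu$ and does not follow from \eqref{div} "integrated against $r^2$". The paper does not prove it either: it cites \cite{GallaySverak15}, where the identity is obtained from an antisymmetry property of the axisymmetric Biot--Savart kernel (the analogue of momentum conservation for the planar kernel), valid in the present regularity class. Alternatively one can write $\omeganu=\partial_z(\unu)^r-\partial_r(\unu)^z$ and integrate by parts: $\int(\unu)^r\partial_z(\unu)^r\,r\,d(r,z)=\tfrac12\int r\,\partial_z(((\unu)^r)^2)\,d(r,z)=0$, and $-\int(\unu)^r\partial_r(\unu)^z\,r\,d(r,z)=\int\partial_r(r(\unu)^r)(\unu)^z\,d(r,z)=-\int r\,\partial_z(\unu)^z\,(\unu)^z\,d(r,z)=0$ by \eqref{div} and $(\unu)^r=0$ on $\partial\H$; but this route requires $r|\unu|^2\in L^1(\H)$, i.e.\ finite kinetic energy, which the lemma does not assume. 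Your observation that propagating $r^2\omeganu\in L^1(\H)$ in time needs a separate Gr\"onwall argument is a fair point about justifying the integrations by parts, but it does not substitute for the missing key identity.
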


This identity can be seen in several ways, see, for instance \cite[Lemma 6.4]{GallaySverak15} for a proof that is based on the symmetry properties of the Biot--Savart kernel and applies to our regularity setting. We omit the proof and remark only that 
\[
\int_{\R^3} u\, dx = \frac12 \int_{\R^3} \omega e_{\theta}\times x\, dx = -\frac12 \left(\int_{\H} \omega r^2\, d(r,z)\right)e_z,
\]
whenever $u$ is an axisymmetric vector field and $\omega $ the associated vorticity. The conservation of momentum follows immediately from the Euler equations \eqref{Euler1}, \eqref{Euler2}.

The last global estimate concerns  the energy balance law, for which we assume that the initial kinetic energy is bounded.

\begin{lemma}\label{lem:balance}
Suppose that $\|u_0\|_{L^2(\R^3)} <\infty$. Then
\begin{equation}\label{N12}
\|\unu(t)\|_{L^2(\R^3)}^2 + \nu\int_0^t \|\grad_x \unu\|_{L^2(\R^3)}^2 \, dt = \|u_0\|_{L^2(\R^3)}^2\quad\mbox{for all }t>0.
\end{equation}
\end{lemma}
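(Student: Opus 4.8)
The plan is to derive the energy balance \eqref{N12} directly from the Navier--Stokes momentum equation \eqref{NS1} by testing with $\unu$ itself. Since the solution to the axisymmetric vorticity equation is smooth and decaying for positive times (it lies in $C((0,T);L^\infty(\H))$ by \cite{GallaySverak15}, and the Biot--Savart estimates of Section \ref{sec:est} together with Lemma \ref{xi_improve} give good spatial decay of $\unu$ and $\grad_x\unu$), all the integrations by parts below are justified; the only care needed is at $t=0$, where $u_0$ is merely in $L^2(\R^3)$. First I would note that, by the results of \cite{GallaySverak15}, $\unu$ is a strong solution on $(0,T)$, so that for $0<\tau<t$ one may multiply \eqref{NS1} by $\unu$, integrate over $\R^3$, and integrate in time over $(\tau,t)$.

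The key cancellations are the standard ones: the pressure term vanishes since $\int_{\R^3}\unu\cdot\grad_x p_\nu\,dx = -\int_{\R^3} p_\nu\,\grad_x\cdot\unu\,dx = 0$ by \eqref{NS2}; the transport term vanishes since $\int_{\R^3}\unu\cdot(\unu\cdot\grad_x\unu)\,dx = \frac12\int_{\R^3}\unu\cdot\grad_x|\unu|^2\,dx = -\frac12\int_{\R^3}|\unu|^2\,\grad_x\cdot\unu\,dx = 0$; and the viscous term gives $\nu\int_{\R^3}\unu\cdot\laplace_x\unu\,dx = -\nu\int_{\R^3}|\grad_x\unu|^2\,dx$. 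This yields
\[
\frac12\frac{d}{dt}\|\unu(t)\|_{L^2(\R^3)}^2 + \nu\|\grad_x\unu(t)\|_{L^2(\R^3)}^2 = 0 \qquad\text{for }t>0,
\]
and hence, after integrating from $\tau$ to $t$,
\[
\|\unu(t)\|_{L^2(\R^3)}^2 + 2\nu\int_\tau^t\|\grad_x\unu(s)\|_{L^2(\R^3)}^2\,ds = \|\unu(\tau)\|_{L^2(\R^3)}^2.
\]

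It remains to pass to the limit $\tau\to 0^+$. The right-hand side converges to $\|u_0\|_{L^2(\R^3)}^2$ because $\unu\in C([0,T);L^1(\H))$ together with the interior regularization and the uniform bounds furnish $\unu(\tau)\to u_0$ in $L^2(\R^3)$ as $\tau\to 0$ (alternatively, one invokes that $\unu$ is the mild solution with datum $u_0$, so weak $L^2$ continuity at $t=0$ holds, and combined with $\|\unu(\tau)\|_{L^2}\le\|u_0\|_{L^2}$ — which follows from the identity for $\tau'<\tau$ and lower semicontinuity — one upgrades to norm convergence). The left-hand side is monotone in $\tau$, so the dissipation integral converges by monotone convergence to $2\nu\int_0^t\|\grad_x\unu\|_{L^2}^2\,ds$. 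Dividing the factor $2$ appropriately and using Lemma \ref{L3} to rewrite $\|\grad_x\unu\|_{L^2(\R^3)}=\|\omeganu\|_{L^2(\R^3)}$ if desired, we arrive at \eqref{N12}.

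The main obstacle is not the formal computation but justifying the limit $\tau\to 0$ and the integrability needed to make the integrations by parts rigorous: the initial velocity $u_0$ need not decay or be smooth, so one must rely on the smoothing of the heat semigroup (encoded in \cite{GallaySverak15} and quantified by Lemma \ref{xi_improve}) to know that $\unu(s)$ and $\grad_x\unu(s)$ decay sufficiently fast in space for each $s>0$, and that the $L^2$ norm is continuous down to $t=0$. Once these qualitative facts are in hand — all of which are available from the cited well-posedness theory — the energy identity is immediate.
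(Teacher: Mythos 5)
There is a genuine gap here, and it is exactly the point that the paper's proof is built to address. Your argument rests on the assertion that, because $\unu$ is smooth for positive times, one may test \eqref{NS1} with $\unu$ itself and all the cancellations --- in particular $\int_{\R^3}\unu\cdot(\unu\cdot\grad_x\unu)\,dx=0$ --- are ``justified.'' But that cancellation is an integration by parts over all of $\R^3$, and it requires the cubic quantity $|\unu|^2|\grad_x\unu|$ (equivalently $|\unu|^3$) to be integrable with respect to the three-dimensional measure $r\,d(r,z)$. None of the a priori bounds actually available --- $\unu\in L^\infty(L^2(\R^3))$ and $\grad_x\unu\in L^2(L^2(\R^3))$ from the energy inequality \eqref{NS3D-EI}, plus $\frac1r\grad_x\unu\in L^\infty(L^p(\R^3))$ from Lemma \ref{weighted-MR} --- delivers this: in three dimensions $L^\infty(L^2)\cap L^2(\dot H^1)$ only gives $\unu\in L^{10/3}$ in space-time, short of the $L^4$-type control needed for the trilinear term, and interior smoothness or $L^\infty$ bounds on $\xinu$ say nothing about the decay of $\unu$ in the $r$-direction. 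The two-dimensional embedding \eqref{embedding} controls unweighted $L^q(\H)$ norms, not the weighted norms that constitute $L^q(\R^3)$. This is precisely what the paper means when it says that Serrin's and Shinbrot's conditions are not obvious to verify here because of ``the appearance of weights.'' Accordingly, the actual proof does not test with $\unu$: it tests with a time-mollified smooth approximation $F=\eta^\eps\ast\unu^{\delta}$, passes to the limits $\delta\to0$ and then $\eps\to0$, and controls the nonlinear term at each stage via the weighted interpolation inequality of Lemma \ref{weighted-interpolation}, which bounds $\|\unu\|_{L^4(\R^3)}$ by $\|\unu\|_{L^2}^{\lambda}\|\grad_x\unu\|_{L^2}^{1/2}\|\frac1r\grad_x\unu\|_{L^p}^{1/2-\lambda}$ with exponents tuned so that $\int_0^T\|\unu\|_{L^4}^2\|\grad_x\unu\|_{L^2}\,dt<\infty$. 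Your proposal asserts the conclusion of this, the hard step, by appealing to smoothness and unquantified ``good spatial decay.''

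A secondary, more minor issue: your recovery of the initial datum as $\tau\to0^+$ is partly circular (you invoke ``the identity for $\tau'<\tau$'' to obtain the monotonicity that you then use to upgrade weak to strong $L^2$ convergence at $t=0$), though this piece could be repaired using \eqref{NS3D-EI} together with weak $L^2$-continuity. The essential missing ingredient remains the weighted $L^4$ control of the velocity.
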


It is a classical result by Leray that for any divergence-free initial datum $u_0$ in $L^2(\R^3)$, there exists a weak solution to the Navier--Stokes equations \eqref{NS1}, \eqref{NS2} satisfying the energy inequality
\begin{equation}\label{NS3D-EI}
 \|\un(t)\|_{L^2(\R^3)}^2+\nu\int_0^t\|\nabla_x \un\|_{L^2(\R^3)}^2\, dt\leq   \|u_0\|_{L^2(\R^3)}^2,
\end{equation}
cf.~\cite{Leray34}. Whether there is an energy \emph{equality} \eqref{N12}   for such solutions is an important open problem.  There are various conditions available in the literature under which an equality can be established, most notably, Serrin's condition $u\in L^q((0,T);L^p(\R^d))$ with $\frac{d}p + \frac2q\le 1$   or Shinbrot's criterion $\frac2p+\frac2q\le 1$ and $p\ge 4$, cf.~\cite{Serrin63,Shinbrot74}. We refer to \cite{CheskidovLuo18} for an extension of the previous results to a larger class of function spaces and to \cite{BerselliChiodaroli18} for a recent improvement based on assumptions on the gradient of the velocity.

It is not difficult to see, that we can construct mild solutions in the setting of \cite{GallaySverak15} that satisfy the inequality \eqref{NS3D-EI}, and thus, thanks to the uniqueness in that setting,   our solutions do as well.  We remark that in \cite{BuckmasterVicol17} Buckmaster and Vicol construct  weak solutions for the three-dimensional Navier for which the energy inequality is not automatically achieved.
 Unfortunately, it is not obvious how to check  Serrin's or Shinbrot's integrability conditions to ensure an energy equality in the axisymmetric setting. The problem is the appearance of weights as, for instance, in \eqref{pmaxreg} and in suitable Sobolev inequalities. 
For this reason, we  provide a proof of \eqref{N12} that is tailored to our needs but still mimics the original arguments in \cite{Serrin63,Shinbrot74}.

\begin{proof}Thanks to the well-posedness result in \cite{GallaySverak15}, we may  suppose that \eqref{NS3D-EI} holds true in our setting. In particular,
we deduce
\begin{equation}\label{regularity1}
\un\in L^{\infty}((0,T); L^2(\R^3)^3)\quad \mbox{and} \quad \nabla_x \un\in L^2((0,T);L^2(\R^3)^{3\times 3})\,.
\end{equation}
In addition, thanks to the $L^p$ bound on the vorticity in Lemma \ref{lem1} and the weighted maximal regularity estimate in Lemma \ref{weighted-MR}, it holds that
\begin{equation}\label{regularity2}
\frac1r\grad_x \un\in L^{\infty}((0,T);L^p(\R^3)^{3\times 3}).
\end{equation}
By standard density arguments, we may thus find a sequence $\{\un^{\delta}\}_{\delta\downarrow 0}$ of divergence-free functions in $C_c^{\infty}((0,T);C_c^{\infty}(\R^3)^3)$ that converges towards $\un$ in $L^2((0,T); H^1_0(\R^3)^3)$ and stays bounded in all the spaces in which $\un$ is contained. We furthermore denote by $\eta^{\eps}$ a standard mollifier on $\R$. Because 
\[
F(t,x) =  \int_0^T\eta^{\eps}(t-\tau) \un^{\delta}(\tau,x)\, d\tau
\]
is an admissible test function in the definition of distributional solution of the Navier--Stokes equations, we find that
\begin{align*} 
\MoveEqLeft\int_0^T \int_{\R^3} \eta^{\eps}(T-\tau) \un^{\delta}(\tau,x) \cdot \un(T,x)\, dxd\tau\\
 & = \int_0^T \int_{\R^3} \eta^{\eps}(-\tau) \un^{\delta}(\tau,x)\cdot u_0(x)\, dxd\tau\\
&\quad+ \int_0^T \int_0^T\int_{\R^3} \frac{d \eta^{\eps}}{dt}(t-\tau)\un^{\delta}(\tau,x)\cdot \un(t,x) \, dxd\tau dt\\
&\quad -  \int_0^T \int_0^T\int_{\R^3} \eta^{\eps}(t-\tau)\grad_x\un^{\delta}(\tau,x) : \un(t,x)\otimes\un(t,x)\, dxd\tau dt\\
&\quad - \nu\int_0^T\int_0^T \int_{\R^3} \eta^{\eps}(t-\tau)\grad_x \un^{\delta}(\tau,x):\grad_x \un(t,x)\, dxd\tau dt.
\end{align*}
In a fist step, we send $\delta$ to zero with $\varepsilon>0$ fixed. The convergence is obvious for all but the nonlinear term. It is enough to show that the nonlinear term vanishes when $\un^{\delta}$ is replaced by $v^{\delta} = \un^{\delta} - \un$. Performing an integration by parts, we can throw the derivative on one of the $\un(t,x)$. H\"older's inequality then yields
\begin{align*}
\MoveEqLeft \left| \int_0^T \int_0^T\int_{\R^3} \eta^{\eps}(\tau-t)\grad_x v^{\delta}(\tau,x) : \un(t,x)\otimes\un(t,x)\, dxd\tau dt\right|\\
&\le \int_0^T \|\eta^{\eps}\ast v^{\delta}\|_{L^4(\R^3)} \|\un\|_{L^4(\R^3)} \|\grad_x \un\|_{L^2(\R^3)}\, dt,
\end{align*}
where by $\ast$ we denote the convolution-type operation between $\eta^{\eps}$ and $v^{\delta}$. We now have to make use of the interpolation inequality in Lemma \ref{weighted-interpolation} in the appendix and notice that $|\grad u|\le |\grad_x u|$ for any axisymmetric velocity field $u$. We find that
\begin{align*}
\MoveEqLeft \int_0^T \|\eta^{\eps}\ast v^{\delta}\|_{L^4(\R^3)} \|\un\|_{L^4(\R^3)} \|\grad_x \un\|_{L^2(\R^3)}\, dt\\
&\lesssim \int_0^T \|\eta^{\eps}\ast v^{\delta}\|_{L^2(\R^3)}^{\lambda} \|\eta^{\eps}\ast \grad_x v^{\delta}\|_{L^2(\R^3)}^{\frac12}\|\frac1r\eta^{\eps}\ast \grad_x v^{\delta}\|_{L^p(\R^3)}^{\frac12-\lambda}\\
&\quad\quad\quad \times \|\un\|_{L^2(\R^3)}^{\lambda}\|\grad_x \un\|_{L^2(\R^3)}^{\frac32}\|\frac1r\grad_x \un\|_{L^p(\R^3)}^{\frac12- \lambda}\, dt.
\end{align*}
Using H\"older's and Young's convolution inequality, we then infer that
\begin{align*}
\MoveEqLeft \int_0^T \|\eta^{\eps}\ast v^{\delta}\|_{L^4(\R^3)} \|\un\|_{L^4(\R^3)} \|\grad_x \un\|_{L^2(\R^3)}\, dt\\
&\lesssim \|  v^{\delta}\|_{L^{\infty}(L^2(\R^3))}^{\lambda} \|\frac1r \grad_x v^{\delta}\|_{L^{\infty}(L^p(\R^3))}^{\frac12-\lambda} \| \grad_x v^{\delta}\|_{L^2((0,T)\times \R^3)}^{\frac12}\\
&\quad\quad\quad \times \|\un\|_{L^{\infty}(L^2(\R^3))}^{\lambda}\|\grad_x \un\|_{L^2((0,T)\times \R^3)}^{\frac32}\|\frac1r\grad_x \un\|_{L^{\infty}(L^p(\R^3))}^{\frac12- \lambda}
\end{align*}
From \eqref{regularity1} and \eqref{regularity2} and the assumptions on $v^{\delta}$, we deduce that the right-hand side in the above estimate is vanishing as $\delta\to0$. Passing to the limit in the weak formulation of the Navier--Stokes equations above thus yields
\begin{align*} 
\MoveEqLeft\int_0^T \int_{\R^3} \eta^{\eps}(T-\tau) \un(\tau,x) \cdot \un(T,x)\, dxd\tau\\
 & = \int_0^T \int_{\R^3} \eta^{\eps}(-\tau) \un(\tau,x)\cdot u_0(x)\, dxd\tau\\
&\quad -  \int_0^T \int_0^T\int_{\R^3} \eta^{\eps}(t-\tau)\grad_x\un(\tau,x) : \un(t,x)\otimes\un(t,x)\, dxd\tau dt\\
&\quad - \nu\int_0^T\int_0^T \int_{\R^3} \eta^{\eps}(t-\tau)\grad_x \un(\tau,x):\grad_x \un(t,x)\, dxd\tau dt.
\end{align*}
Notice that the term that involved the time derivative on $\eta^{\eps}$ dropped out by imposing that $\eta^{\eps}$ is an even function.

We finally send $\eps$ to zero and may thus choose $\eps<T$ from here on. Notice first that
\[
\nu\int_0^T\int_0^T \int_{\R^3} \eta^{\eps}(t-\tau)\grad_x \un(\tau,x):\grad_x \un(t,x)\, dxd\tau dt\to\nu\int_0^T \|\grad_x\un\|_{L^2(\R^3)}^2\, dt
\]
thanks to standard convergence properties of the mollifier. For the convergence of the end-point integrals, we make use of the fact that our solutions are continuous in time with respect to the weak topology in $L^2(\R^3)$, see, e.g., \cite[Corollary 3.2]{Shinbrot74}. Because $\eta^{\eps}$ is chosen even, Lebesgue's convergence theorem then yields
\begin{align*}
\int_0^T \int_{\R^3} \eta^{\eps}(T-\tau) \un(\tau,x) \cdot \un(T,x)\, dxd\tau & \to \frac12\|\un(T)\|_{L^2(\R^3)}^2,\\
\int_0^T \int_{\R^3} \eta^{\eps}(-\tau) \un(\tau,x)\cdot u_0(x)\, dxd\tau& \to \frac12\|u_0\|_{L^2(\R^3)}^2.
\end{align*}

It remains to argue that the nonlinear term is vanishing. Notice first that
\[
\int_0^T \int_{\R^3} \un^{\delta} \cdot (\un\cdot \grad_x \un^{\delta})\, dxdt = \frac12 \int_0^T \int_{\R^3} \un\cdot \grad_x |\un^{\delta}|^2\, dxdt= 0
\]
for any $\delta$ if $\un^{\delta}$ is defined as above. This identity carries over to the limit $\delta\to0$ as can be seen by using the same kind of estimates that we used above in order to control the nonlinear term. We may thus rewrite the nonlinear term above as
\[
\int_0^T  \int_{\R^3}  (\un-\eta^{\eps}\ast \un) \cdot (\un\cdot \grad_x\un)\, dxdt,
\]
and, by applying the same kind of estimates again, we observe that this term vanishes as $\eps\to0$ by the convergence properties of the mollifier.
\end{proof}

\section{Vanishing viscosity limit. Proof of Theorem \ref{thm:compactness}}\label{RS+VV}

In this section, we turn to the proof of Theorem \ref{thm:compactness}. The compactness argument is based on the a priori estimate \eqref{bound-on-xi} on the relative vorticity and local estimates on the velocity field. The latter are provided by the following two lemmas.

\begin{lemma}\label{lem:local}
For any $R>0$ and any $p_*\in (1,p]\cap (1,2)$, there exists a constant $C(R)$ such that
\begin{align}
 \|\un\|_{L^{\infty}((0,T);W^{1,p_*}(B_R(0)))}& \le C(R)\left(\|\xi_0\|_{L^p(\R^3)}+\|\omega_0\|_{L^1(\H)}\right).\label{boundH1}
\end{align}
where $B_R(0)$ is the ball in $\R^3$ centered in the origin.
\end{lemma}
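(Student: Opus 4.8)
The goal is a local bound on $\un$ in $W^{1,p_*}(B_R(0))$ uniform in $\nu$ and $t$, so the argument must only use the a priori controls that are $\nu$-independent, namely the $L^p$-bound on $\xi_\nu$ from Lemma~\ref{lem1} (hence the $L^1(\H)$-bound on $\omega_\nu$, since $\|\omega_\nu\|_{L^1(\H)}\le \|r\xi_\nu\|_{L^1(\H)}$ needs care — better to note $\|\omega_0\|_{L^1(\H)}=\|\omega_\nu(t)\|_{L^1(\H)}$ by conservativity of \eqref{omega-nu}). The gradient part is immediate: by Lemma~\ref{weighted-MR}, $\|\frac1r\grad_x\un\|_{L^p(\R^3)}\lesssim\|\xi_\nu\|_{L^p(\R^3)}\le\|\xi_0\|_{L^p(\R^3)}$, and on the fixed ball $B_R(0)$ the weight $r$ is bounded, so $\|\grad_x\un\|_{L^{p}(B_R(0))}\le R\,\|\frac1r\grad_x\un\|_{L^p(B_R(0))}\lesssim\|\xi_0\|_{L^p(\R^3)}$; since $p_*\le p$, Hölder on the bounded ball downgrades this to $L^{p_*}$.

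**Controlling $\un$ itself.** For the $L^{p_*}(B_R(0))$ bound on $\un$ (not its gradient), I would split the Biot--Savart representation $\un=\int_{\H}G(r,z,\bar r,\bar z)\,\omega_\nu(\bar r,\bar z)\,d(\bar r,\bar z)$ into near-field and far-field exactly as in Remark~\ref{rem}: write $G=G_1+G_2$ with $G_1=G\chi_{B_1}$ and $G_2=G\chi_{B_1^c}$, so $\un=u_{\nu,1}+u_{\nu,2}$. Using $|G(r,z,\bar r,\bar z)|\lesssim(|r-\bar r|+|z-\bar z|)^{-1}$ from \cite[Eq.~(2.11)]{GallaySverak15}, the far-field piece satisfies $\|u_{\nu,2}\|_{L^\infty(\R^3)}\lesssim\|\omega_\nu\|_{L^1(\H)}=\|\omega_0\|_{L^1(\H)}$, which on the bounded ball gives an $L^{p_*}(B_R(0))$ bound. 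For the near-field piece, $u_{\nu,1}=(\chi_{B_1}\tfrac1{|\cdot|})*|\omega_\nu|$ pointwise-dominated; since $\chi_{B_1}\tfrac1{|\cdot|}\in L^s(\R^2)$ for every $s<2$, Young's convolution inequality with $\|\omega_\nu\|_{L^1}$ (or interpolating with the $L^{p}$-type control coming from $\xi_\nu$) yields $u_{\nu,1}\in L^{p_*}$ locally — here I'd pick whichever Young exponent pairing lands in $L^{p_*}(B_R(0))$ using only $\|\omega_0\|_{L^1(\H)}$ and $\|\xi_0\|_{L^p(\R^3)}$ (note $\omega_\nu=r\xi_\nu$ is locally in $L^p$ as well). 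Summing the two contributions and the gradient estimate gives \eqref{boundH1}.

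**The main obstacle.** The delicate point is making the near-field velocity bound genuinely local and $\nu$-uniform while only invoking norms that appear on the right-hand side of \eqref{boundH1}: Young's inequality $\|f*g\|_{L^{p_*}}\le\|f\|_{L^s}\|g\|_{L^1}$ with $\tfrac1{p_*}=\tfrac1s$ forces $s=p_*<2$, so $\chi_{B_1}\tfrac1{|\cdot|}\in L^{p_*}(\R^2)$ is exactly what is needed and is where $p_*<2$ is used — this is why the hypothesis $p_*\in(1,p]\cap(1,2)$ appears. One must also be mildly careful that the convolution structure is genuinely two-dimensional in $(r,z)$ (the kernel bound above is two-dimensional), and that restricting to $B_R(0)\subset\R^3$ and converting three-dimensional Lebesgue measure to $2\pi r\,d(r,z)$ introduces only the bounded weight $r\le R$, never an inverse weight. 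None of this is hard, but keeping the bookkeeping of which $\nu$-independent norm controls which piece is the part that needs attention; the rest is routine convolution estimates. I expect the author's proof to follow essentially this near/far decomposition together with Lemma~\ref{weighted-MR}.
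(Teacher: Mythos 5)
Your proposal is correct and matches the paper's proof in all essentials: the gradient bound comes from Lemma \ref{weighted-MR} together with the bounded weight $r\le R$ on the ball, and the bound on $\un$ itself comes from the near/far splitting of the Biot--Savart kernel using $|G|\lesssim(|r-\bar r|+|z-\bar z|)^{-1}$ and the $\nu$-uniform control $\|\omega_\nu(t)\|_{L^1(\H)}\le\|\omega_0\|_{L^1(\H)}$ (an inequality from Lemma \ref{lem1} at the $L^1$ endpoint, not an equality as your parenthetical suggests, but that is all that is needed). The only, harmless, deviation is that you get the $L^{p_*}$ bound on $\un$ directly from Young's inequality $L^{p_*}\ast L^1\to L^{p_*}$ (using $\chi_{B_1}|\cdot|^{-1}\in L^{p_*}(\R^2)$ precisely because $p_*<2$), whereas the paper first bounds $\|\un\|_{L^1(B_R)}$ with the $L^1\ast L^1$ pairing and then upgrades to $L^{p_*}$ via a Poincar\'e inequality combined with the gradient estimate; both routes invoke only the norms appearing on the right-hand side of \eqref{boundH1}.
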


\begin{proof}
By standard interpolation between Lebesgue spaces, we may without loss of generality assume that $p = p_*$. The bound on the gradient is an immediate consequence of the maximal regularity estimate in Lemma \ref{weighted-MR} and formula \eqref{gradientu},
\[
\| \grad \un\|_{L^{\infty}((0,T);L^p(B_R ))}\le R^{p-1} \|r^{\frac1p-1}\grad \un\|_{L^{\infty}((0,T);L^p(\H))}\lesssim R^{p-1} \|\xi_0\|_{L^p(\R^3)},
\]
where $B_R = B_R^{\H}(0)$ denotes an open ball of radius $R$ centered at $0$ in the half-space $\H$. Notice that it is enough to show the statement of the lemma for $W^{1,p_*}(B_R)$ equipped with $d(r,z)$ instead of $W^{1,p_*}(B_R(0))$ equipped with $dx$.

In order to deduce an estimate on the velocity field itself, we first invoke the Poincar\'e estimate for mean-zero functions and the previous bound to observe that%
\begin{eqnarray}
\|  \un\|_{ L^p(B_R )}
 &\lesssim& R \| \grad \un\|_{ L^p(B_R)}+R^{\frac2p-2} \| \un\|_{ L^1(B_R )}\nonumber\\
 &\lesssim& R^p\|\xi_0\|_{L^p(\R^3)}+R^{\frac2p-2 }\| \un\|_{  L^1(B_R )}\label{Lp-estimate-u}
\end{eqnarray}
uniformly in time. It remains to bound the $L^1$ norm of $u$. For this purpose, we make use of the decay behavior of the Biot--Savart kernel. In \cite{GallaySverak15}, the authors show that the decay of the axisymmetric Biot--Savart kernel is identical (in scaling) to that of the planar Biot--Savart kernel, that is, if we rewrite \eqref{BS} as
\[
\un(r,z)  = \int_{\H} K(r,z,\bar r,\bar z)\on(\bar r,\bar z)\, d(\bar r,\bar z),
\]
then the kernel $K$ obeys the estimate
\[
|K(r,z,\bar r,\bar z)| \lesssim\frac1{|r-\bar r|+|z-\bar z|}.
\]
We thus have and write
\begin{align*}
 \int_{B_R}|\un(r,z)|\, d(r,z)
 &\lesssim\int_{B_R}\int_{\H}\frac{|\on(\bar r,\bar z)|}{|r-\bar r|+|z-\bar z|}\, d(\bar r,\bar z) d(r,z)\\
 &=\int_{B_R}\int_{B^{\H}_{2R}(r,z)}\frac{|\on(\bar r,\bar z)|}{|r-\bar r|+|z-\bar z| } \, d(\bar r,\bar z) d(r,z)\\
 &\quad +\int_{B_R}\int_{\H\setminus B^{\H}_{2R}(r,z)}\frac{|\on(\bar r,\bar z)|}{|r-\bar r|+|z-\bar z|} \, d(\bar r,\bar z)d(r,z).
\end{align*}
For the near-field, we use Fubini's theorem, Young's convolution estimate and Lemma \ref{lem1} to deduce
\begin{align*}
\int_{B_R}\int_{B^{\H}_{2R}(r,z)}\frac{|\on(\bar r,\bar z)|}{|r-\bar r|+|z-\bar z| } \, d(\bar r,\bar z) d(r,z)& \le  \int_{B_{3R}} \frac1{|r|+|z|} \, d(r,z)\int_{B_{2R}} |\on|\, d(r,z)\\
& \lesssim R\|\on\|_{L^1(\H)}  \le R \|\xi_0\|_{L^1(\R^3)}.
\end{align*}
For the far-field, we simply observe that the kernel is bounded below, and thus
\begin{align*}
\int_{B_R}\int_{\H\setminus B^{\H}_{2R}(r,z)}\frac{|\on(\bar r,\bar z)|}{|r-\bar r|+|z-\bar z|} \, d(\bar r,\bar z) &\lesssim  \frac1R\int_{B_R}\int_{\H\setminus B^{\H}_{2R}(r,z)}|\on(\bar r,\bar z) |\, d(\bar r,\bar z)\\
&\lesssim R\|\on\|_{L^1(\H)}  \le R \|\xi_0\|_{L^1(\R^3)}.
\end{align*}
Plugging the previous bounds into \eqref{Lp-estimate-u} yields \eqref{boundH1} as desired.
\end{proof}

\begin{lemma}\label{lem:dual}
For any $R>0$, it holds that
\[
\|\partial_t \un\|_{L^2((0,T);W^{-1,1}_{\sigma}(B_R(0))}\le C(R)\left(\|\xi_0\|_{L^p(\R^3)} + \|\omega_0\|_{L^1(\H)}\right),
\]
where $W^{-1,1}_{\sigma}(B_R(0))^3$ is the Banach space that is dual to the space of divergence-free vector fields in $W^{1,\infty}_0(B_R(0))^3$. 
\end{lemma}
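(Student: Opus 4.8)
The plan is to pair the momentum equation \eqref{NS1} with divergence-free test vector fields supported in $B_R(0)$, so that the pressure gradient is annihilated and $\partial_t\un$ is expressed through the nonlinear and the viscous term alone; both are then estimated by Lemma \ref{lem:local}. Precisely, for almost every $t\in(0,T)$ and any divergence-free $\phi\in W^{1,\infty}_0(B_R(0))^3$, the weak formulation of \eqref{NS1}, \eqref{NS2} gives
\[
\langle\partial_t\un(t),\phi\rangle=\int_{B_R(0)}\un\otimes\un:\grad_x\phi\,dx-\nu\int_{B_R(0)}\grad_x\un:\grad_x\phi\,dx,
\]
since $\int_{B_R(0)}\grad_x p_\nu\cdot\phi\,dx=-\int_{B_R(0)}p_\nu\,(\grad_x\cdot\phi)\,dx=0$. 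Because $\|\grad_x\phi\|_{L^\infty(B_R(0))}\le\|\phi\|_{W^{1,\infty}_0(B_R(0))}$, it remains to bound the two integrals, uniformly in $\nu\in(0,1]$ and $t$, by $C(R)\big(\|\xi_0\|_{L^p(\R^3)}+\|\omega_0\|_{L^1(\H)}\big)\|\grad_x\phi\|_{L^\infty}$; taking the supremum over admissible $\phi$ and then integrating over the finite interval $(0,T)$ produces the asserted $L^2$-in-time bound (with a harmless factor $T^{1/2}$).

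The viscous term is immediate from H\"older's inequality on the bounded set $B_R(0)$ and the gradient estimate of Lemma \ref{lem:local}: for any admissible exponent $p_*$,
\[
\nu\Big|\int_{B_R(0)}\grad_x\un:\grad_x\phi\,dx\Big|\le\nu\,|B_R(0)|^{1-1/p_*}\|\grad_x\un\|_{L^{p_*}(B_R(0))}\|\grad_x\phi\|_{L^\infty}\le\nu\,C(R)\big(\|\xi_0\|_{L^p(\R^3)}+\|\omega_0\|_{L^1(\H)}\big)\|\grad_x\phi\|_{L^\infty},
\]
uniformly in $t$, and the prefactor $\nu\le1$ is harmless. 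For the nonlinear term, $\big|\int\un\otimes\un:\grad_x\phi\big|\lesssim\|\un(t)\|_{L^2(B_R(0))}^2\|\grad_x\phi\|_{L^\infty}$, so the only genuine point is a bound $\|\un\|_{L^\infty((0,T);L^2(B_R(0)))}\le C(R)\big(\|\xi_0\|_{L^p(\R^3)}+\|\omega_0\|_{L^1(\H)}\big)$ that is uniform in $\nu$. For $p\ge 6/5$ this follows from Lemma \ref{lem:local} and the Sobolev embedding $W^{1,p_*}(B_R(0))\hookrightarrow L^2(B_R(0))$ with $p_*\in[6/5,p]\cap(1,2)$; for general $p>1$ one repeats the Biot--Savart estimate of that proof but extracts $L^2$ rather than $L^1$ from the near field, using $|K(r,z,\bar r,\bar z)|\lesssim|(r,z)-(\bar r,\bar z)|^{-1}$, the fact that $\big\|\chi_{B_{cR}}|\cdot|^{-1}\big\|_{L^s(\R^2)}<\infty$ with $1/s=3/2-1/p\in(1/2,1]$, Young's convolution inequality, the local bound $\|\on\|_{L^p(B^\H_{cR})}\lesssim_R\|\xi_0\|_{L^p(\R^3)}$ (valid because $r\le cR$ on $B^\H_{cR}$), and the uniform-in-$\nu$ bound $\|\on\|_{L^1(\H)}\lesssim\|\omega_0\|_{L^1(\H)}$ for the far field. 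Note that the global energy identity of Lemma \ref{lem:balance} is \emph{not} available here, since Theorem \ref{thm:compactness} only assumes $u_0\in L^2_{\loc}(\R^3)$; this is why only local bounds are invoked.

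Combining the two estimates gives $\|\partial_t\un(t)\|_{W^{-1,1}_\sigma(B_R(0))}\le C(R)\big(\|\xi_0\|_{L^p(\R^3)}+\|\omega_0\|_{L^1(\H)}\big)$ for a.e.\ $t$, whence the claim. The main (and essentially only) difficulty is the uniform local $L^2$ control of $\un$ needed to handle the nonlinear term: the bare $W^{1,p_*}$ bound of Lemma \ref{lem:local} does not embed into $L^2$ when $p<6/5$, so one must return to the Biot--Savart representation and use the extra integrability it provides; once the pressure has been eliminated by testing against solenoidal fields, the rest is routine.
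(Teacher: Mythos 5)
Your proof is correct and follows essentially the same route as the paper: test the momentum equation with divergence-free fields in $W^{1,\infty}_0(B_R(0))^3$ so the pressure drops out, bound the nonlinear term by $\|\un\|_{L^2(B_R(0))}^2\|\grad_x F\|_{L^\infty}$ and the viscous term by H\"older on the bounded set, and feed in Lemma \ref{lem:local}. The one point where you genuinely diverge is the uniform local $L^2$ bound on $\un$: you observe that the three-dimensional embedding $W^{1,p_*}(B_R(0))\hookrightarrow L^2(B_R(0))$ fails for $p_*<6/5$, and you therefore rerun the Biot--Savart near-field estimate in $L^2$ via Young's inequality with $\bigl\|\chi_{B_{cR}}|\cdot|^{-1}\bigr\|_{L^s(\R^2)}$, $1/s=3/2-1/p$, together with the local bound $\|\on\|_{L^p(B^{\H}_{cR})}\lesssim_R\|\xi_0\|_{L^p(\R^3)}$. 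That argument is sound, but the detour is not actually necessary: the proof of Lemma \ref{lem:local} explicitly reduces to, and establishes, the $W^{1,p_*}$ bound on the half-disc $B_R^{\H}$ with respect to the planar measure $d(r,z)$, and the two-dimensional Sobolev embedding $W^{1,p_*}\hookrightarrow L^{2p_*/(2-p_*)}$ gives an exponent strictly larger than $2$ for every $p_*>1$; since $r\le R$ on $B_R(0)$, this controls $\|\un\|_{L^2(B_R(0),dx)}$ uniformly in $\nu$ and $t$ without returning to the kernel. Your remark that the global energy inequality is unavailable here, because Theorem \ref{thm:compactness} only assumes $u_0\in L^2_{\loc}(\R^3)$, is correct and is precisely why only the local estimates may be invoked.
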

The proof of this estimate is fairly standard. We sketch the argument for the convenience of the reader.

\begin{proof}
Let $F$ be a divergence-free vector field in $W^{1,\infty}_0(B_R(0))^3$. Then
\begin{align*}
\left(\un\cdot \grad \un, F\right)_{W^{-1,1}_\sigma(B_R(0))\times W^{1,\infty}_0(B_R(0))} &= - \int_{B_R(0)} \un\otimes \un: \grad F\, dx\\
&\le \|\un\|_{L^2(B_R(0))}^2 \| F\|_{W^{1,\infty}(B_R(0))},
\end{align*}
and a similar bound holds for the dissipation term $-\nu\laplace \un$. The statement thus follows directly from the momentum equation and Lemma \ref{lem:local}.
\end{proof}

We are now in the position to prove the compactness result. 

\begin{proof}[Proof of Theorem \ref{thm:compactness}] Thanks to Lemmas \ref{lem:local} and \ref{lem:dual}, the sequence of velocity fields $\{\un\}_{\nu\downarrow0}$ satisfies the hypotheses of the Aubin--Lions Lemma, and thus, for any $R>0$, there exists a subsequence that converges strongly in $C([0,T];L^2(B_R(0)))$. By applying a diagonal sequence argument, this convergence carries over to the space $C([0,T];L^2(K))$ for any compact $K$ in $\R^3$. Hence, there exists a subsequence (not relabelled) and a vector field $u\in C([0,T];L^2_{\loc}(\R^3)^3)$ such that 
\[
\un \to u\quad\mbox{strongly in }C([0,T];L^2_{\loc}(\R^3)).
\]
It is readily checked that $u$ is a distributional solution to the Euler equations \eqref{Euler1}, \eqref{Euler2}.

Moreover, from the a priori estimate on the relative vorticity in Lemma \ref{lem1}, we deduce that there exists a function $\xi \in L^{\infty}((0,T);L^p(\R^3))$ such that, upon taking a further subsequence, 
\[
\xi_n \to \xi \quad\mbox{weakly-$\star$ in }L^{\infty}((0,T);L^p(\R^3)).
\]
We finally notice that the velocity field $u$ and the vorticity $\omega  = r\xi$ are related by the Biot--Savart law that holds true in the sense of distributions.
\end{proof}

\section{Renormalization. Proof of Theorem \ref{thm:renorm}}\label{sec:renorm}
In this section, we provide the argument for the renormalization property of the relative vorticity obtained as the vanishing viscosity solution of the Navier--Stokes equations in Theorem \ref{thm:compactness}. Our approach is based on the duality formula in Lemma \ref{s1-L1} established in \cite{DiPernaLions89} and follows closely the argumentation from \cite{CrippaSpirito15,CrippaNobiliSeisSpirito17}.

In a first step, we show a compactness result for a backwards advection-diffusion equation, that is, as we will see, dual to the vorticity formulation \eqref{xinu} of the Navier--Stokes equations.

\begin{lemma}\label{lem:backwards}
Let $q\in (2,\infty)$   and $\chi\in L^1((0,T);L^q(\R^d))$ be given. Let $\fn$ denote the unique  solution in the class $ L^{\infty}((0,T);L^q(\R^3))$ with $\grad |\fn|^{\frac{q}2}\in L^2((0,T);L^2(\R^3))$    to the backwards advection-diffusion equation 
\[
-\partial_t \fn -\un\cdot \grad \fn  = \chi  + \nu\left(\laplace \fn -\frac1r\partial_r \fn\right)
\]
in $\H$ with finial datum $\fn(T)=0$ and homogeneous Dirichlet boundary conditions on $\partial \H$. Then 
 there exists a subsequence $\{\nu_k\}_{k\in\N}$ (the same as in Theorem \ref{thm:compactness}) such that 
\[
f_{\nu_k}\to f\quad\mbox{weakly-$\star$ in }L^{\infty}((0,T);L^q (\R^3)),
\]
where $f$ is the unique solution to the backwards transport equation \eqref{tvp}. 
\end{lemma}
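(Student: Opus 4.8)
\textbf{Proof plan for Lemma \ref{lem:backwards}.}

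The plan is to establish the required convergence by first deriving uniform-in-$\nu$ a priori bounds for $\fn$ in the appropriate spaces, then passing to the limit in the weak formulation of the backwards advection--diffusion equation, and finally identifying the limit with the unique renormalized solution of \eqref{tvp} guaranteed by Theorem \ref{thm:DP}(i). First I would record the basic energy-type estimate: testing the backwards equation with $|\fn|^{q-2}\fn$, exploiting the coercivity of the Cartesian Laplacian $\laplace_x = \laplace + \frac1r\partial_r$ exactly as in the proof of Lemma \ref{lem1}, and using that the first-order term $-\frac1r\partial_r\fn$ contributes a favorable boundary term by the homogeneous Dirichlet condition (the sign works out because the equation is \emph{backwards} in time), one obtains
\[
\|\fn\|_{L^{\infty}((0,T);L^q(\R^3))} + \sqrt{\nu}\,\big\||\fn|^{q/2}\big\|_{L^2((0,T);H^1(\R^3))}^{2/q} \lesssim \|\chi\|_{L^1((0,T);L^q(\R^3))},
\]
with a constant independent of $\nu$ (since $q>1$ the Laplacian constant $q-1$ can be absorbed, as in Lemma \ref{lem1}). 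In particular $\{\fn\}$ is bounded in $L^{\infty}((0,T);L^q(\R^3))$, so along a subsequence it converges weakly-$\star$ to some $f$ in that space; by a diagonal argument this subsequence can be taken to be $\{\nu_k\}$ from Theorem \ref{thm:compactness}.

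The second step is to pass to the limit in the distributional formulation. For a test function $\varphi\in C_c^{\infty}([0,T)\times\H)$ the weak form reads, schematically,
\[
\int_0^T\!\!\int_{\H}\fn\big(\partial_t\varphi + \unu\cdot\grad\varphi\big)r\,d(r,z)dt + \int_0^T\!\!\int_{\H}\chi\varphi\, r\,d(r,z)dt = \nu\int_0^T\!\!\int_{\H}\fn\big(\laplace\varphi - \tfrac1r\partial_r\varphi\big)r\,d(r,z)dt,
\]
after moving all derivatives onto $\varphi$. The viscous term on the right is $O(\nu)$ by the uniform $L^{\infty}_tL^q_x$ bound and vanishes as $\nu_k\to 0$. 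The linear terms involving $\partial_t\varphi$, $\chi$ and $\grad\varphi$ pass to the limit using the weak-$\star$ convergence of $\fn$. The only delicate term is the nonlinear product $\fn\,\unu\cdot\grad\varphi$: here I would combine the weak-$\star$ convergence of $\fn$ in $L^{\infty}_tL^q_x$ with the \emph{strong} convergence $\unu\to u$ in $C([0,T];L^2_{\loc}(\R^3))$ from Theorem \ref{thm:compactness}, upgraded to strong convergence in $L^{q'}_{\loc}$ by interpolating that $L^2_{\loc}$ convergence against the uniform $L^{\infty}_t W^{1,p_*}_{\loc}$ (hence $L^{\infty}_t L^{s}_{\loc}$ for large $s$) bound on $\unu$ from Lemma \ref{lem:local}; since $\grad\varphi$ is compactly supported and smooth, $\unu\cdot\grad\varphi \to u\cdot\grad\varphi$ strongly in $L^{q'}((0,T)\times\H)$, which against weak-$\star$ $\fn$ gives the desired convergence of the product. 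This identifies $f$ as a distributional (equivalently, by Theorem \ref{thm:DP}(iii) when $\frac1q + \frac1{q'}\le 1$ holds, which it does, renormalized) solution of \eqref{tvp} with final datum $f(T)=0$; uniqueness in Theorem \ref{thm:DP}(i) then pins down $f$ and shows the \emph{whole} subsequence converges.

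The main obstacle I anticipate is the treatment of the nonlinear transport term at the level of integrability: one needs enough strong compactness of $\unu$ in a space paired with $L^q$ to pass to the limit, and near the boundary $\partial\H$ and near $r=0$ the weights in Lemma \ref{lem:local} must be handled with care. Restricting to test functions $\varphi$ compactly supported in the open half-space sidesteps the weight at $r=0$, and the local $W^{1,p_*}$ bound plus Sobolev embedding furnishes the needed local strong $L^{q'}$ convergence of $\unu$ for $q'$ in the admissible range (here $q>2$ so $q'<2$, which is comfortably covered). A secondary technical point is justifying that the constructed $\fn$ — the mild/variational solution in the stated class — actually satisfies the a priori estimates; this follows by the usual approximation-and-smoothing argument already invoked in the proof of Lemma \ref{lem1}, since the backwards equation is parabolic with smooth (in the interior) coefficients $\unu\in C((0,\infty);L^{\infty})$ by Gallay--\v{S}ver\'ak. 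With these in hand the limit passage is routine.
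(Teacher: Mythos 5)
Your proposal is correct and follows essentially the same route as the paper: the same $L^q$ energy estimate obtained by testing with $|\fn|^{q-2}\fn$ and exploiting the sign of the diffusion term for the backwards equation, followed by weak-$\star$ extraction and passage to the limit in the weak formulation, where the product term is handled by pairing the weak-$\star$ convergence of $\fn$ in $L^{\infty}_tL^q_x$ against the strong $L^2_{\loc}$ convergence of $\unu$ (which suffices directly since $q>2$ gives $q'<2$, so no interpolation upgrade is actually needed), and concluding by uniqueness for \eqref{tvp}. Your extra remarks on the $\sqrt{\nu}$-weighted gradient bound and the diagonal argument are consistent with, if slightly more detailed than, the paper's argument.
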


We remark that renormalized solutions to advection-diffusion equations have been considered, for instance,  in \cite{DiPernaLions89,LeBrisLions04,Figalli08}.

\begin{proof}We start with an a priori estimate. A direct computation reveals that
\begin{align*}
\frac{d}{dt}\frac1q\int_{\R^3} |\fn|^q\, dx & = - \int_{\R^3} |\fn|^{q-2}\fn\chi\, dx +\nu (q-1) \int_{\R^3} |\fn|^{q-2} |\grad \fn|^2\, dx\\
&\ge  - \|\fn\|_{L^q(\R^3)}^{q-1} \|\chi\|_{L^q(\R^3)},
\end{align*}
and thus, by a Gronwall argument and our choice of the final datum, 
\[
\|\fn\|_{L^{\infty}((0,T);L^q(\R^3))} \le \|\chi\|_{L^1((0,T);L^q(\R^3))}.
\] 
Hence, there exists a subsequence $\{\nu_k\}_{k\in\N}$ that can be chosen as a subsequence of the one  found in Theorem \ref{thm:compactness} and an $\tilde f\in L^{\infty}((0,T);L^q(\R^3))$ such that
\[
f_{\nu_k}\to \tilde f\quad \mbox{weakly-$\star$ in }L^{\infty}((0,T);L^q(\R^3)).
\]
Since at the same time
\[
u_{\nu_k}\to u\quad \mbox{strongly in }L^2((0,T);L^2_{\rm{loc}}(\R^3)),
\]
and $q\ge2$, we find in the limit that $\tilde f$ solves the backward advection equation \eqref{tvp}, and thus, $\tilde f=f$ by uniqueness. In particular, the convergence result holds true for the subsequence from Theorem \ref{thm:compactness}. 
\end{proof}

We finally turn to the proof of the renormalization property.

\begin{proof}[Proof of Theorem \ref{thm:renorm}]
Let $\chi \in C_c^{\infty}((0,T)\times \H)$ be given and $\fn$ a solution to the backwards advection-diffusion equation considered in Lemma \ref{lem:backwards}.  From the statement of the lemma, it follows that $\{f_{\nu_k}\}_{k\in\N}$  converges to $f$ weakly-$\star$ in  $L^{\infty}((0,T);L^q(\R^3))$ for any $q\in (2,\infty)$. By using the advection-diffusion equation, this convergence can be upgraded to hold in $C([0,T],L^q_{\weak}(\R^3))$ for any $q\in (2,\infty)$, that is,
\begin{equation}\label{improved_conv}
\sup_{[0,T]} \int_{\R^3}(f_{\nu_k}(t)-f(t))\zeta\, dx\to 0\quad \forall \zeta\in L^{\tilde q}(\R^3),
\end{equation}
where $1/q+1/\tilde q=1$. Indeed, it is not difficult to obtain this result for smooth test functions and the full statement is obtained by standard approximation procedures.

Upon a standard approximation argument, $\fn$ can be considered as a test function in the distributional formulation of the vorticity formulation \eqref{xinu} of the Navier--Stokes equations. Thus
\begin{align*}
 \int_{\R^3} \fn(0)\xi_0\, dx  &= \int_0^T \int_{\R^3} \xin\left(\partial_t\fn +\un\cdot \grad\fn +\nu\left(\laplace \fn -\frac1r\partial_r \fn\right)\right)\, dxdt\\
 & = -\int_0^T \int_{\R^3} \xin\chi\, dxdt.
 \end{align*}
 As a consequence of Theorem \ref{thm:compactness}, Lemma \ref{lem:backwards} and \eqref{improved_conv}, we can pass to the limit in this identity and find  
 \[
 \int_{\R^3} f (0)\xi_0\, dx  + \int_0^T \int_{\R^3} \xi\chi\, dxdt =0.
 \]
 On the other hand, because $u$ satisfies the assumptions of Theorem \ref{thm:DP}, see also Remark \ref{rem}, there exists a unique distributional solution $\tilde \xi\in L^{\infty}((0,T);L^p(\R^3))$ to the transport equation \eqref{xi} with $u$ being the given solution to the Euler equations and with initial datum $\xi_0$. By Lemma \ref{s1-L1}, we then find that 
 \[
 \int_{\R^3} f (0)  \xi_0\, dx  + \int_0^T \int_{\R^3}\tilde  \xi\chi\, dxdt =0,
 \]
and thus,
\[
\int_0^T \int_{\R^3} (\xi-\tilde \xi)\chi\, dxdt=0.
\]
Because $\chi$ was arbitrarily fixed, we infer that $\tilde \xi = \xi$ almost everywhere, and thus, $\xi$ coincide almost everywhere with the renormalized solution $\tilde \xi$.
\end{proof}

\section{Energy conservation. Proof of Theorem \ref{thm:energy}}\label{sec:energy}

We now prove Theorem \ref{thm:energy}. Throughout this section, we thus suppose that $\on$ is nonnegative and has finite impulse. Moreover, we assume that $p>\frac32$ as in the assumption of the theorem. Notice that by interpolation between Lebesgue spaces, we may always suppose that $p\in \left(\frac32,2\right)$, which we will do from here on.

One of the main ingredients of the proof is the convergence of the kinetic energy that is established in the following lemma.

\begin{lemma}\label{L4}
Let $\{\nu_k\}_{k\in\N}$ be the subsequence found in Theorem \ref{thm:compactness}. Then it holds that
\[
\lim_{k\to \infty}\|u_{\nu_k}(t)\|_{L^2(\R^3)} = \|u(t)\|_{L^2(\R^3)}
\]
for any $t\in [0,T]$.
\end{lemma}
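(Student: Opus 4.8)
The plan is to establish the convergence of the kinetic energy $\|u_{\nu_k}(t)\|_{L^2(\R^3)} \to \|u(t)\|_{L^2(\R^3)}$ by combining the strong local convergence from Theorem \ref{thm:compactness} with a uniform control on the tails of the energy. Since $u_{\nu_k}\to u$ strongly in $C([0,T];L^2_{\loc}(\R^3))$, we have $\|u_{\nu_k}(t)\|_{L^2(B_R(0))}\to \|u(t)\|_{L^2(B_R(0))}$ for every fixed $R>0$ and every $t$. Hence it suffices to show that the far-field contribution $\|u_{\nu_k}(t)\|_{L^2(\R^3\setminus B_R(0))}$ is small uniformly in $k$ as $R\to\infty$; the limiting field $u$ then inherits the same tail estimate by lower semicontinuity, and a standard $3\eps$-argument closes the proof.

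The heart of the matter is therefore a uniform tail estimate for the kinetic energy, and this is where the hypotheses of Theorem \ref{thm:energy} enter: nonnegativity of $\omega_{\nu}$, finiteness of the impulse $\int_{\H}\omega_0 r^2\, d(r,z)$ (which is conserved along the viscous flow by Lemma \ref{L2}, since $\omega_{\nu}\ge 0$), and $p>3/2$. The idea is to use the decay of the axisymmetric Biot--Savart kernel, $|K(r,z,\bar r,\bar z)|\lesssim (|r-\bar r|+|z-\bar z|)^{-1}$ (cf.\ Remark \ref{rem} and Lemma \ref{lem:local}), to split $u_{\nu}=u_{\nu,\mathrm{near}}+u_{\nu,\mathrm{far}}$ according to whether the source point lies in $B_{R/2}^{\H}(r,z)$ or not. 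For the near-field part, in the region $|(r,z)|\gtrsim R$, the source vorticity is supported away from the axis at distance $\gtrsim R$, so one can bound $\|u_{\nu,\mathrm{near}}\|_{L^2}$ on the tail by the local smoothing/decay of the singular integral against $\|\omega_{\nu}\|_{L^p}$ restricted to $\{\bar r\gtrsim R\}$, which is controlled by $R$-weighted moments of $\omega_0$; the nonnegativity and the impulse bound give $\int_{\{\bar r \gtrsim R\}}\omega_{\nu}\, r\, d(r,z) \lesssim R^{-1}\int \omega_0 r^2\, d(r,z)$, and this, interpolated with the uniform $L^p$ bound from Lemma \ref{lem1}, produces a tail that vanishes as $R\to\infty$ uniformly in $\nu$. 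For the far-field part, $|K|\lesssim R^{-1}$ on $B_{R/2}^{\H}(r,z)^c$, but one must be careful that $u_{\nu,\mathrm{far}}$ need not be $L^2$ globally; instead one estimates it pointwise or in a weighted norm and again uses the impulse bound together with $L^1\cap L^p$ control of $\omega_{\nu}$ to get the required smallness on $B_R(0)^c$. The condition $p>3/2$ is exactly what makes the relevant interpolation exponents between $\|\omega_\nu\|_{L^1}$, $\|\omega_\nu\|_{L^p}$, and the impulse integrable/summable in the tail.

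Concretely, the steps I would carry out in order are: (i) record that $\int_{\H}\omega_{\nu}(t)r^2\, d(r,z) = \int_{\H}\omega_0 r^2\, d(r,z)$ by Lemma \ref{L2} and nonnegativity, and that $\|\omega_{\nu}(t)\|_{L^1(\H)}$, $\|\xi_{\nu}(t)\|_{L^p(\R^3)}$ are bounded uniformly in $\nu$ and $t$ by Lemma \ref{lem1}; (ii) derive a uniform bound $\int_{\{\bar r>\rho\}}\omega_{\nu}(t,\bar r,\bar z)\,\bar r\, d(\bar r,\bar z) \lesssim \rho^{-1}$ and its $L^p$-weighted analogue; (iii) via the Biot--Savart kernel decay, bound $\|u_{\nu}(t)\|_{L^2(B_R(0)^c)}$ by a quantity that tends to $0$ as $R\to\infty$ uniformly in $\nu$ (this is the technical core and the main obstacle, since it requires handling both the near-axis singularity of the weighted measure and the slow decay of the two-dimensional kernel, so choosing the right splitting radius relative to $R$ and the right pair of interpolation exponents is delicate); (iv) by lower semicontinuity of the $L^2$ norm under the weak-$\star$ convergence of $\xi_{\nu_k}$ and the convergence of $u_{\nu_k}$, transfer the tail bound to $u$; (v) finally combine the uniform tail bound with the strong local convergence in a standard $\eps/3$-argument to conclude $\|u_{\nu_k}(t)\|_{L^2(\R^3)}\to\|u(t)\|_{L^2(\R^3)}$ for each fixed $t\in[0,T]$. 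The main obstacle is step (iii): proving the uniform-in-$\nu$ decay of the energy tails purely from the $L^1\cap L^p$ bound plus the conserved impulse, which is precisely where $p>3/2$ is used and cannot be relaxed with the present moment control.
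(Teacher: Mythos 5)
Your reduction of the lemma to a uniform-in-$k$ smallness of the energy tails $\|u_{\nu_k}(t)\|_{L^2(\R^3\setminus B_R(0))}$, combined with the strong local convergence from Theorem \ref{thm:compactness} and an $\varepsilon/3$ (or Riesz compactness) argument, is exactly the structure of the paper's proof. The divergence, and the problem, is in your step (iii): you propose to prove the uniform tail bound \emph{directly from the Biot--Savart representation}, using the kernel bound $|K(r,z,\bar r,\bar z)|\lesssim (|r-\bar r|+|z-\bar z|)^{-1}$ together with the conserved impulse and the $L^1\cap L^p$ bounds. This cannot work as stated, for two concrete reasons. First, the crude kernel bound only yields pointwise decay of order $|x|^{-1}$ for the velocity away from the bulk of the vorticity, and $\int_{|x|>R}|x|^{-2}\,dx=\infty$ in $\R^3$; so this bound is not even strong enough to conclude $u_\nu(t)\in L^2(\R^3)$, let alone a quantitative tail estimate. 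To extract square-integrable (dipole-type) far-field decay from the Biot--Savart law one needs cancellations in the kernel, i.e.\ an asymptotic expansion whose error terms are controlled by \emph{higher} moments of the vorticity, which are not available here. Second, the moment you do control, $\int_\H \omega_\nu r^2\,d(r,z)$ (Lemma \ref{L2}), constrains only the radial spreading: your Chebyshev bound controls $\int_{\{\bar r>\rho\}}\omega_\nu$, but the exterior region $\R^3\setminus B_R(0)$, intersected with $\H$, contains the set $\{\bar r\ \text{small},\ |\bar z|>R\}$, on which neither the impulse nor the $L^1\cap L^p$ bounds give any uniform-in-$\nu$ smallness. There is no a priori control of the axial spreading of $\omega_\nu$, so the ``source far from the origin'' contribution in your splitting cannot be made small.

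The paper avoids both obstructions by obtaining the tail bound from the PDE rather than from the kernel: one tests the Navier--Stokes equations with $(1-\eta_R)^2 u_\nu$ for a cutoff $\eta_R$ supported in $B_{2R}(0)$ and equal to $1$ on $B_R(0)$, so that the exterior energy at time $t$ is bounded by the exterior energy of $u_0$ (which tends to $0$ as $R\to\infty$ because $u_0\in L^2(\R^3)$) plus flux, pressure and viscous error terms supported in the annulus $B_{2R}\setminus B_R$. These annulus terms are then estimated uniformly in $\nu$ and $t$ using local Calder\'on--Zygmund bounds, Sobolev embedding, the conserved impulse and the uniform $L^p$ bound on $\xi_\nu$ (Lemma \ref{lem1}); the hypothesis $p>\frac32$ enters only through the exponent bookkeeping in these annulus estimates (in particular in the interpolation $\|\omega\|_{L^q}\lesssim \|\xi r^2\|_{L^1}^{1-\theta}\|\xi\|_{L^p}^{\theta p/q}$ with $q=\frac{2p}{p+1}$ used for the pressure). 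In short: the smallness of the tails is \emph{propagated from the initial data} by a localized energy inequality, not deduced from the spatial structure of the Biot--Savart kernel. If you want to salvage your route, you would need either quantitative far-field asymptotics of the axisymmetric Biot--Savart kernel beyond the stated upper bound, or a uniform control of an axial moment of $\omega_\nu$; neither is available under the hypotheses of Theorem \ref{thm:energy}.
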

\begin{proof}

We have already seen in Theorem \ref{thm:compactness} that   $u_{\nu_k}$ converges to $u$ strongly in $C(0,T;L^2_{\loc}(\R^3))$. We have to turn this result into a global convergence result. In fact,  it is enough to show that
\begin{equation}\label{Riesz}
\sup_{k} \|\unuk(t)\|_{L^2(\R^3\setminus B_R(0))} \to 0 \quad\mbox{as }R\to \infty.
\end{equation}
Indeed, if \eqref{Riesz} holds true, given $\eps>0$, we can find a radius $R\ge 1 $ such that
\[
\sup_{k}\|\unuk(t,\cdot +h)\|_{L^2(\R^3\setminus B_{2R}(0))} \le \eps \quad \mbox{for any } |h|\le 1.
\]
Moreover, thanks to the strong convergence in $B_{2R}(0)$, we have that
\[
\sup_{k} \|\unuk(t) - \unuk(t,\cdot+h)\|_{L^2(B_R(0))} \le\eps \quad\mbox{for $|h|$ sufficiently small.}
\]
Combining both estimates, we find that
\[
\sup_{k} \|\unuk(t) - \unuk(t,\cdot+h)\|_{L^2(\R^3)} \le 3\eps \quad\mbox{for $|h|$ sufficiently small.}
\]
By Riesz' compactness criterion, the latter result together with \eqref{Riesz} and the standard energy estimate \eqref{NS3D-EI} imply strong convergence in $L^2(\R^3)$ for ant $t\in [0,T]$. 

We now give the argument for \eqref{Riesz}. For notational convenience, we write $u$ and $\nu$ instead of $\unuk$ and $\nu_k$. We consider a smooth cut-off function $\eta_R$ that is $1$ in $B_R = B_R(0)$ and $0$ outside $B_{2R} = B_{2R}(0)$. Testing the Navier--Stokes equations with $(1-\eta_R)^2 u$ and integrating by parts yields
\begin{align}
 \MoveEqLeft \frac{d}{dt}\frac12 \int (1-\eta_R)^2 |u|^2 \, dx + \nu \int (1-\eta_R)^2|\grad u|^2\, dx\label{Riesz0}\\
 & = \int (\eta_R-1)\grad \eta_R\cdot u|u|^2\, dx + 2\int (\eta_R-1)\grad\eta_R \cdot up\, dx\label{Riesz1}\\
 &\quad  +2 \nu \int(1-\eta_R)(\grad \eta_R\cdot \grad) u\cdot u\, dx.\label{Riesz2}
\end{align}

The error term in \eqref{Riesz2} is quite easily estimated. Indeed, using the Cauchy--Schwarz inequality together with the   elementary inequality $2ab\le \eps a^2 +\frac1\eps b^2$, we can absorb the  gradient term in \eqref{Riesz2} in the second term in \eqref{Riesz0} and we are left with an error term of the form $\frac{\nu}{R^2} \|u\|_{L^2(\R^3)}^2$. In view of the energy inequality for the Navier--Stokes equations, this term is obviously vanishing as $R\to \infty$ uniformly in $t$.

As a next step, we address the first error term in \eqref{Riesz1}. Using the properties of the cut-off function, this term  is  bounded as follows:
\begin{equation}\label{Riesz5}
\int (\eta_R-1)\grad \eta_R\cdot u|u|^2\, dx \lesssim \frac1R \int_{B_{2R}\setminus B_R} |u|^3\, d x\lesssim \int_{B_{2R}\setminus B_R} |u|^3 \, d(r,z).
\end{equation}
Here, we have used the same notation for both the ball in $\R^3$ and the one in $\H$. It should be clear from the situation, which one is considered.
We now use the Sobolev embedding in two dimensions and find
\[
\int_{B_{2R}\setminus B_R} |u|^3 \, d(r,z) \lesssim \left( R^{-\frac65} \int_{B_{2R}\setminus B_R} |u|^{\frac65}  \, d(r,z)+\int_{B_{2R}\setminus B_R} |\grad u|^{\frac65}  \, d(r,z) \right)^{\frac52}.
\]
Moreover, thanks to local Calderon--Zygmund estimates (which we can perform on the level of the standard Biot--Savart kernel in $\R^3$), we find that
\begin{equation}\label{Riesz3}
\int_{B_{2R}\setminus B_R} |\grad u|^{\frac65}  \, d(r,z) \lesssim  R^{-\frac65} \int_{B_{3R}\setminus B_{\frac R2}} |u|^{\frac65}  \, d(r,z)+\int_{B_{3R}\setminus B_{\frac{R}2}} |\omega|^{\frac65}  \, d(r,z)  .
\end{equation}
With regard to the first term in \eqref{Riesz3}, we notice that by Jensen's inequality and the energy inequality, it holds that
\begin{align*}
R^{-\frac65} \int_{B_{3R}\setminus B_\frac R2} |u|^{\frac65}  \, d(r,z) &\lesssim R^{-\frac25} \left(\int_{B_{3R}\setminus B_\frac R2} |u|^2 \, d(r,z)\right)^{\frac35}\\
& \lesssim R^{-1} \|u\|_{L^2(\R^3)}^{\frac65} \le R^{-1} \|u_0\|_{L^2(\R^3)}^{\frac65} ,
\end{align*}
and, thus, the first term in \eqref{Riesz3} vanishes as $R\to \infty$, uniformly in $t$. For the second term in \eqref{Riesz3}, we appeal to H\"older's inequality,
\[
\int_{B_{3R}\setminus B_{\frac{R}2}} |\omega|^{\frac65}  \, d(r,z) \le \left(\int_{B_{3R}\setminus B_{\frac{R}2}} |\omega|^{p}\, d(r,z)\right)^{\frac1{5(p-1)}} \left(\int_{B_{3R}\setminus B_{\frac{R}2}} |\omega|\, d(r,z)\right)^{\frac{5p-6}{5(p-1)}} .
\]
We can easily smuggle in some weights,
\[
\int_{B_{3R}\setminus B_{\frac{R}2}} |\omega|^{\frac65}  \, d(r,z) \lesssim R^{-\frac{9p-12}{5(p-1)}}  \|\xi\|_{L^p(\R^3)}^{\frac{p}{5(p-1)}}   \|\omega r^{2}\|_{L^1(\H)}^{\frac{5p-6}{5(p-1)}} .
\]
It remains to observe that the exponent on $R$ is negative because $p>\frac32>\frac{4}3$. Combining the above estimates, we conclude that
\begin{equation}\label{Riesz4}
 \lim_{R\to \infty}
\frac1R\int_{B_{2R}\setminus B_R} |u|^3 \, dx \sim   \lim_{R\to \infty}\int_{B_{2R}\setminus B_R} |u|^3 \, d(r,z)=0
\end{equation}
uniformly in $t$, and thus, in view of \eqref{Riesz5}, the first term in \eqref{Riesz1} vanishes.

We finally turn to the  term that involves the pressure, that is, the second term in \eqref{Riesz1}. Using the properties of the cut-off function and H\"older's inequality, we observe that
\[
\int (\eta_R-1)\grad\eta_R \cdot up\, dx \lesssim  \left(\frac1R\int_{B_{2R}\setminus B_R} |u|^3\, dx \right)^{\frac13} \left(\frac1R\int_{B_{2R}\setminus B_R} |p|^{\frac32}\, dx\right)^{\frac23}.
\]
In view of \eqref{Riesz4}, it is enough to show that the pressure term is bounded, in the sense that
\begin{equation}\label{Riesz6}
 \frac1R\int_{B_{2R}\setminus B_R}|p|^{\frac32}\, dx \lesssim 1
 \end{equation}
 uniformly in $R$, $\nu$ and $t$. 
To establish this estimate, we recall that $p$ solves the Poisson equation $-\laplace p = \grad^2 :u\otimes u$, and thus, we have that $p = \sum_{ij}\partial_i\partial_j G \ast (u_iu_j)$, where $G$ is the Newtonian potential in $\R^3$, $G(z)=\frac1{4\pi}\frac1{|z|}$. Let us write $f = G \ast (u_iu_j)$. The localized Calder\'on--Zygmund estimates yield
\[
R^{-\frac23}\|\grad^2 f\|_{L^{\frac32}(B_{2R}\setminus B_R)} \lesssim R^{-\frac23} \|u_iu_j\|_{L^{\frac32}(B_{3R}\setminus B_{\frac{R}2})} + R^{-\frac53} \|\grad f\|_{L^{\frac32}(B_{3R}\setminus B_{\frac{R}2})}.
\]
The first term is controlled thanks to \eqref{Riesz4}. (Notice that the exact value of the radii in \eqref{Riesz4} is not of importance but the scaling in $R$.) Now, since for any $m\in \left(\frac32,\infty\right)$, it holds that
\[
R^{-\frac53}\|\grad f\|_{L^{\frac32}(B_{3R})} \lesssim R^{\frac13-\frac3m}\|\grad f\|_{L^m(\R^3)}
\]
by H\"older's inequality, and $R^{\frac13-\frac3m}\to 0$ as $R\to\infty$ for $m<9$, it suffices to show that $\|\grad f\|_{L^m(\R^3)}$ is bounded for some $m\in \left(\frac32,9\right)$. Notice first that the standard Sobolev inequality in $\R^3$ yields 
$$\|\grad f\|_{L^m(\R^3)} \lesssim \|\grad^2 f\|_{L^s(\R^3)}$$
as long as $s = \frac{3m}{3+m} \in [1,3)$. By our choice of $m$, we have to restrict the range of admissible $s$ to the interval  $\left(1,\frac94\right)$.
Now we use the maximal regularity properties of the Laplacian, in the sense that 
$$\|\grad^2 f\|_{L^s(\R^3)} \lesssim \|\laplace f\|_{L^s(\R^3)} \lesssim \| u\|_{L^{2s}(\R^3)}^2.$$
In order to estimate the velocity field in $L^{2s}$, we use the Sobolev inequality in three dimensions and Calder\'on--Zygmund estimates for the gradient of the Biot--Savart kernel, 
$$\| u\|_{L^{2s}(\R^3)}\lesssim \|\grad u\|_{L^{\frac{6s}{2s+3}}(\R^3)}\lesssim \|\omega \|_{L^{\frac{6s}{2s+3}}(\R^3)}.$$
Let us now write  $q=\frac{6s}{2s+3}$ and notice that $q\in \left(\frac65,\frac{27}{15}\right)$ by our choice of $s$. Recall from our line of proof that we have to show the boundedness of $\|\omega\|_{L^q(\R^3)}$ for \emph{some} value of $q$ in the above interval. This is achieved via interpolation of the estimates in  Lemmas \ref{lem1} and \ref{L2}. Indeed, setting $\theta = \frac{q-1}{p-1}$ for some $q\in \left(\frac65,\frac{3}{2}\right)$ whose explicit value we will specify in a moment, we have that
\begin{eqnarray*}
\|\omega\|_{L^{q}(\R^3)}^q &=& 
\int |r\xi|^{(1-\theta)+\theta p}\,r\,d(r,z)\\
&=&\int r^{1+\theta(p-1)}|\xi|^{1-\theta}|\xi|^{\theta p}\, r\,d(r,z)\\
&\leq&\left(\int |\xi|r^{\frac{1+\theta(p-1)}{1-\theta}} r\,d(r,z)\right)^{1-\theta}\left(\int|\xi|^{p} r\,d(r,z)\right)^{\theta}\\
&=&\|\xi r^{\frac{q(p-1)}{p-q}}\|_{L^1(\R^3)}^{1-\theta}\|\xi\|_{L^p(\R^3)}^{\theta p}\,.
\end{eqnarray*}
where for the second to last inequality we used H\"older estimate with exponents $1/\theta$ and $\frac1{1-\theta}$ and in the last identity we used the definition of $\theta$.
We may now determine $q$ by requiring that $\frac{q(p-1)}{p-q}=2$, which yields $q= \frac{2p}{p+1}$. It remains to notice that this choice of $q$ is admissible because $\frac{2p}{p+1}\in\left(\frac65,\frac43\right)$ for any $p\in \left(\frac32,2\right)$ which we may assume here as explained in the introduction to this section. By using the a priori bounds in Lemmas \ref{lem1} and \ref{L2} we conclude that \eqref{Riesz6} holds uniformly in $\nu$ and $t$. We have thus established Lemma \ref{L4}.
%
%
%
%
%
%
%
%
%
%
%
\end{proof}

With these preparations, we are now in the position to prove Theorem \ref{thm:energy}. Our short proof is strongly inspired by \cite{CheskidovFilhoLopesShvydkoy16}.

\begin{proof}[Proof of Theorem \ref{thm:energy}]
In order to prove conservation of energy, we choose a subsequence as in Theorem \ref{thm:compactness}, which we will not relabel for notational convenience, and recall the energy identity in Lemma \ref{lem:balance}, which we rewrite as
\[
0\ge \|\unu(t)\|_{L^2(\R^3)}^2 - \|u_0\|_{L^2(\R^3)}^2   = -2\nu \int_0^t \|\grad_x \unu(s)\|^2_{L^2(\R^3)}\, ds.
\]
Thanks to Lemmas \ref{L3}, \ref{L2} and \ref{xi_improve}, we observe that
\[
\|\grad_x\un (s)\|_{L^2(\R^3)}^2  = \|r\xin(s)\|_{L^2(\R^3)}^2 \le \|r^2 \xin(s)\|_{L^1(\R^3)}\|\xin(s)\|_{L^{\infty}(\R^3)}\lesssim \left(\frac1{\nu s}\right)^{\frac{3}{2p}},
\]
and thus, the energy identity implies that
\[
0\ge \|\unu(t)\|_{L^2(\R^3)}^2 - \|u_0\|_{L^2(\R^3)}^2 \ge - C(\nu t)^{1 - \frac3{2p}},
\]
because $p>\frac32$. Sending $\nu$ to zero, we conclude that
\[
\lim_{\nu\to \infty} \|\unu(t)\|_{L^2(\R^3)} = \|u_0\|_{L^2(\R^3)},
\]
and the statement of the theorem follows upon applying Lemma \ref{L4}, in which the convergence of the kinetic energy is established.
\end{proof}


%

\section*{Appendix: Two  auxiliary inequalities}

We conclude this paper with two auxiliary inequalities, that are weighted versions of  standard Sobolev and interpolation inequalities. 

\begin{lemma}\label{weigthed_sobolev}
Let $1\le s\le t<\infty$ and $\alpha, \beta\in \R$ be such that
\[
\frac{2+\alpha}t = \frac{2-s+\beta}s\quad\mbox{and}\quad \alpha+t>0.
\]
Then
\[
\left(\int_{\H} |f|^t r^{\alpha}\, d(r,z)\right)^{\frac1t}\lesssim \left(\int_{\H} |\grad f|^s r^{\beta}\, d(r,z)\right)^{\frac1s},
\]
for any $f\in C_c^{\infty}(\H)$.
\end{lemma}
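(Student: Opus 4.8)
The natural strategy is to reduce the weighted inequality on the half-space $\H$ to a standard (unweighted) Gagliardo--Nirenberg--Sobolev inequality on a Euclidean space of a cleverly chosen dimension, via a substitution that absorbs the weights. Concretely, I would look for a map that turns the weight $r^\alpha\,d(r,z)$ on the right scaling into the Lebesgue measure on $\R^n$ for a (generally non-integer) dimension $n$. Observe that the two exponent conditions $\frac{2+\alpha}{t}=\frac{2-s+\beta}{s}$ and $\alpha+t>0$ are exactly the compatibility and integrability conditions one gets for the Sobolev embedding $\dot W^{1,s}(\R^n)\hookrightarrow L^t(\R^n)$ once one identifies $n = 2+\alpha$ on the left and checks the relation $\frac1t = \frac1s - \frac1n$ rewrites precisely as the displayed scaling identity (after accounting for $\beta$). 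So the plan is: first, perform a change of variables $r\mapsto \rho = \rho(r)$ (a power, $\rho = r^{a}$ for a suitable exponent $a$) so that $r^\alpha\,dr\,dz$ becomes $\rho^{\gamma}\,d\rho\,dz$ with $\gamma$ chosen to match a radial measure $\rho^{n-2}\,d\rho\,dz$ in dimension $n$; then recognize the right-hand side, after the same substitution and the chain rule, as an $L^s$ norm of the gradient against the correct power weight.

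A cleaner and more robust route, which avoids fractional-dimension bookkeeping, is to prove the inequality directly by a one-dimensional slicing/Hölder argument in the spirit of the classical proof of the Gagliardo--Nirenberg inequality. First I would reduce to $f\ge 0$ by replacing $f$ with $|f|$ (using $|\grad |f||\le|\grad f|$ a.e.). Then, for the borderline scaling-critical case I would integrate $\partial_r f$ and $\partial_z f$ along lines to bound pointwise $f(r,z)^{\text{(power)}}$ by integrals of $|\grad f|$ over the corresponding vertical and horizontal lines, carrying the weight $r^{\alpha}$ through carefully (the horizontal direction is the only one where the weight varies, so the estimate in the $r$-variable is where the conditions $\alpha+t>0$ and the exponent relation enter). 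Multiplying the two slice estimates, integrating over $\H$, and applying Hölder's inequality with the exponents dictated by $s$ and $t$ gives the claim, possibly after first establishing the case $s=1$ and then interpolating/using the standard trick of applying the $s=1$ inequality to a power $f^{\sigma}$ of $f$ and optimizing $\sigma$.

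The main obstacle I expect is the handling of the radial weight near $r=0$: unlike the unweighted case, the one-dimensional integration in the $r$-direction must produce a bound with the weight $r^{\alpha}$ on the left from the weight $r^{\beta}$ on the right, and this is exactly where the hypothesis $\alpha+t>0$ is needed to guarantee convergence of the relevant weighted integral at the origin (and where the exponent identity must be used to make the Hölder bookkeeping close). Since $f\in C_c^\infty(\H)$ has support bounded away from $\{r=0\}$, there is no issue of integrability of $f$ itself, but one must be careful that all constants are uniform in the distance of $\spt f$ to the axis — the scaling structure of the inequality guarantees this, but it has to be tracked. A secondary technical point is that $\H$ is a domain with boundary $\{r=0\}$; since $f$ vanishes near $\partial\H$, the slicing integrals can be started from the boundary without boundary terms, so this causes no real difficulty. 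Once the scaling-critical estimate is in hand, the full range follows by the standard substitution $f\leadsto f^{\sigma}$ and Hölder, exactly as in the classical Gagliardo--Nirenberg argument.
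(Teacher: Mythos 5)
Your second route is essentially the paper's proof: the lemma is established there first for $s=1$ by one--dimensional slicing (the embedding $\dot W^{1,1}\subset L^{\infty}$ applied in each of the two variables), and the general case $s>1$ then follows by applying the $s=1$ inequality to a power $|f|^{\sigma}$ and using H\"older, exactly as you propose. The ``careful weight-carrying'' that your sketch defers is implemented in the paper by working on dyadic strips $[2^k,2^{k+1}]\times\R$ and estimating the difference $f(r,z)-f(2r,z)$ rather than $f$ itself --- so that $r^{\gamma}$ with $\gamma=\alpha/t$ is essentially constant on each strip and the slicing integrals never reach the degenerate set $\{r=0\}$ --- combined with a Hardy-type inequality (this is precisely where $\alpha+t>0$, i.e.\ $\gamma>-1$, is used), interpolation of the weighted $L^1$ and $L^2$ slice bounds, and a final absorption argument exploiting that the dilation contracts the weighted norm.
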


This estimate is proved, for instance, in \cite{Koch99}. We recall the argument for completeness.

\begin{proof}
\emph{Step 1.} We first treat the special case  $s=1$, and thus
\[
\frac{2+\alpha}t = 1+\beta.
\]
We set $\gamma = \frac{\alpha}t$ and let $g\in C_c^{\infty}(\H)$ be defined by $g(r,z) = f(2r,z)$ and $A = [R,2R]\times \R$ and $B = [R,4R]\times \R$ be two subsets of $\H$ for some $R>0$ fixed. By H\"older's inequality, we then have that
\[
\int_A (f-g)^2 r^{2\gamma}\, d(r,z) \le \int_R^{2R} \|r^{\gamma }(f-g)\|_{L^1(dz)} \|r^{\gamma} (f-g)\|_{L^{\infty}(dz)}\, dr.
\]
We now use the embedding $\dot W^{1,1}\subset L^{\infty}$, that holds true in one space dimension, in each variable. On the one hand, using the embedding in $r$ (in form of the fundamental theorem of calculus), we have 
\begin{align*}
\sup_{r\in(R,2R)} \|r^{\gamma }(f-g)\|_{L^1(dz)} &\le \sup_{r\in(R,2R)}r^{\gamma}\int_r^{2r} \|\partial_r f(\rho)\|_{L^1(dz)}d\rho \lesssim \int_B |\grad f|r^{\gamma }\, d(r,z).
\end{align*}
On the other hand, it holds that
\[
\int_0^{2R} \|r^{\gamma} (f-g)\|_{L^{\infty}(dz)} \lesssim \int_R^{2R} r^{\gamma} \|\partial_z(f-g)\|_{L^1(dz)}\, dr \lesssim \int_B |\grad f|r^{\gamma }\, d(r,z),
\]
where we have used the triangle inequality and a rescaling argument in the last inequality. Combining the previous three estimates, we find that
\begin{equation}\label{L2L1}
\left(\int_A (f-g)^2 r^{2\gamma}\, d(r,z)\right)^{\frac12} \lesssim \int_B |\grad f| r^{\gamma}\, d(r,z).
\end{equation}

Our next goal is the Hardy-type inequality
\begin{equation}\label{Hardy}
\int_A |f-g| r^{\gamma}\, d(r,z) \lesssim \int_B |\grad f| r^{\gamma+1}\, d(r,z),
\end{equation}
which holds true provided that $\gamma>-1$, and thus $\alpha+t>0$. It can be established as follows. Using the fundamental theorem again, we observe that
\[
\int_A |f-g|r^{\gamma} \, d(r,z) \le \int_R^{2R} r^{\gamma} \int_r^{2r} \|\partial_r f(\rho)\|_{L^1(dz)}\, d\rho dr \lesssim R^{\gamma+1} \int_B |\partial_r f|\, d(r,z),
\]
which implies \eqref{Hardy} because the prefactor $R^{\gamma+1}$ can be smuggled into the integrand.

Towards the weighted Sobolev inequality with $s=1$, we set $A_k = [2^k,2^{k+1}]\times \R$ and $B_k = [2^k,2^{k+2}]\times \R$ and estimate with the help of the triangle inequality
\begin{align*}
\left(\int_\H |f-g|^t r^{\alpha}\, d(r,z)\right)^{\frac1t} \le \sum_{k\in \Z} \left(\int_{A_k} |f-g|^t r^{\alpha}\, d(r,z)\right)^{\frac1t}
\end{align*}
Interpolation between Lebesgue spaces and an application of \eqref{L2L1} and \eqref{Hardy} yields
\begin{align*}
\MoveEqLeft\left(\int_{A_k} |f-g|^t r^{\alpha}\, d(r,z)\right)^{\frac1t} \\
&\le \left(\int_{A_k} |f-g|r^{\gamma}\, d(r,z)\right)^{\beta-\gamma}\left(\int_{A_k} (f-g)^2 r^{2\gamma}\, d(r,z)\right)^{\frac{1-\beta+\gamma}2}\\
& \lesssim \left(\int_{B_k} |\grad f|r^{\gamma+1}\, d(r,z)\right)^{\beta-\gamma}\left(\int_{B_k} |\grad f|r^{\gamma}\, d(r,z)\right)^{1-\beta+\gamma}\\
&\sim \int_{B_k} |\grad f| r^{\beta}\, d(r,z),
\end{align*}
because $\beta-\gamma = \frac2t-1\in[0,1]$ for $t\in[1,2]$. Summation over $k$ yields
\[
\left(\int_{\H} |f-g|^t r^{\alpha}\, d(r,z)\right)^{\frac1t}\le C\int_{\H} |\grad f| r^{\beta}\, d(r,z)
\]
for some universal constant $C$.
It remains to apply the triangle inequality and a change of variables to the effect that
\begin{align*}
\left(\int_{\H} |f|^t r^{\alpha}\, d(r,z)\right)^{\frac1t} &\le \left(\int_{\H} |g|^t r^{\alpha}\, d(r,z)\right)^{\frac1t} +C\int_{\H} |\grad f| r^{\beta}\, d(r,z)\\
& =  \frac1{2^{\frac{\alpha+1}t}}\left(\int_{\H} |f|^t r^{\alpha}\, d(r,z)\right)^{\frac1t} +C\int_{\H} |\grad f| r^{\beta}\, d(r,z).
\end{align*}
We can absorb the first term on the right-hand side by the left-hand side because $\alpha+1>0$ and obtain
\begin{equation}\label{case1}
\left(\int_{\H} |f|^{\frac{2+\alpha}{1+\beta}} r^{\alpha}\, d(r,z)\right)^{\frac{1+\beta}{2+\alpha}} \lesssim \int_{\H} |\grad f| r^{\beta}\, d(r,z).
\end{equation}

\emph{Step 2.} The general case  $s>1$ follows from the special case $s=1$. Indeed, choosing   $f = |h|^{\frac{2+\alpha}{(1+{\beta})t}}$ in \eqref{case1}, we find
\[
\left(\int_{\H} |f|^t r^{\alpha}\, d(r,z)\right)^{\frac1t} \lesssim \left(\int_{\H} |f|^{\frac{(1+\beta)t}{2+\alpha}-1} |\grad f| r^{\beta}\, d(r,z)\right)^{\frac{2+\alpha}{t(1+\beta)}},
\]
and the statement follows with the help of H\"older's inequality.
\end{proof}

We finally provide an interpolation inequality.

\begin{lemma}\label{weighted-interpolation}
Let $p\in (1,2]$ and $\lambda = \frac{3p-3}{7p-6}$. Then
\begin{align}
\MoveEqLeft \left(\int_{\H} |f|^4r\, d(r,z)\right)^{\frac14} \nonumber\\
&\lesssim \left(\int_{\H} |f|^2r\, d(r,z)\right)^{\frac\lambda2} \left(\int_{\H}|\grad f|^2 r\, d(r,z)\right)^{\frac14}\left(\int_{\H} |\grad f|^p r^{1-p}\, d(r,z)\right)^{\frac{1/2-\lambda}p}\label{I1}
\end{align}
for any $f\in C_c^{\infty}(\H)$.
\end{lemma}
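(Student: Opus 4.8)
The plan is to derive \eqref{I1} from the weighted Sobolev inequality of Lemma \ref{weigthed_sobolev}, applied not to $f$ itself but to the power $|f|^{\kappa}$, and then to split the resulting gradient integral by Hölder's inequality; the exponent $\lambda$ in the statement is precisely the one that makes the ensuing exponent arithmetic close. Concretely, I would set
\[
\kappa=\frac{7p-6}{4p-3},\qquad s=\frac{16p-12}{13p-10},\qquad \alpha=1,\qquad \beta=\frac{11p-10}{13p-10}.
\]
Since $s$ is decreasing and $\kappa$ increasing in $p$, one has $s\in[\tfrac54,\tfrac43]\subset(1,2]$ and $\kappa\in[1,\tfrac85]$ for $p\in(1,2]$ (with $\kappa=1$ only at $p=1$); moreover the conjugate exponent $t$ that Lemma \ref{weigthed_sobolev} forces through $\frac{2+\alpha}{t}=\frac{2-s+\beta}{s}$ works out to $t=\frac{4(4p-3)}{7p-6}\in[\tfrac52,4]$, so that $\kappa t=4$, $1\le s\le t<\infty$ and $\alpha+t>0$: all hypotheses of the lemma are satisfied.

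First I would note that, since $\kappa\ge1$, the function $g:=|f|^{\kappa}$ is Lipschitz with compact support in $\H$, hence admissible in Lemma \ref{weigthed_sobolev} after a routine approximation argument; using $\grad g=\kappa|f|^{\kappa-1}\sgn(f)\grad f$ this gives
\[
\left(\int_{\H}|f|^4 r\, d(r,z)\right)^{1/t}\;\lesssim\;\left(\int_{\H}|f|^{(\kappa-1)s}|\grad f|^s r^{\beta}\, d(r,z)\right)^{1/s}.
\]
Next I would apply Hölder's inequality with the three exponents $\tfrac1{a_1},\tfrac1{a_2},\tfrac1{a_3}$, where $a_1=\frac{6(p-1)}{13p-10}$, $a_2=\frac{7p-6}{13p-10}$, $a_3=\frac{2}{13p-10}$ are nonnegative and sum to one and satisfy the bookkeeping identities $2a_1=(\kappa-1)s$, $2a_2+pa_3=s$, $a_1+a_2+(1-p)a_3=\beta$, so that $|f|^{(\kappa-1)s}|\grad f|^s r^{\beta}=(|f|^2r)^{a_1}(|\grad f|^2r)^{a_2}(|\grad f|^pr^{1-p})^{a_3}$. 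This bounds the right-hand side above by
\[
\left[\left(\int_{\H}|f|^2r\,d(r,z)\right)^{a_1}\left(\int_{\H}|\grad f|^2r\,d(r,z)\right)^{a_2}\left(\int_{\H}|\grad f|^pr^{1-p}\,d(r,z)\right)^{a_3}\right]^{1/s}.
\]

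Finally I would raise the resulting estimate to the power $t/4$; using $t/s=\frac{13p-10}{7p-6}$, the exponents of the three integrals become $\frac{ta_1}{4s}=\frac\lambda2$, $\frac{ta_2}{4s}=\frac14$ and $\frac{ta_3}{4s}=\frac{1/2-\lambda}p$ with $\lambda=\frac{3(p-1)}{7p-6}$ — this last step is where the precise value of $\lambda$ is used — which is exactly \eqref{I1}.

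I do not expect a genuine obstacle here: the argument is essentially an exercise in choosing and matching exponents. The part that needs (elementary) care is verifying that $s,t,\alpha,\beta$ truly satisfy the hypotheses of Lemma \ref{weigthed_sobolev} uniformly for $p\in(1,2]$, that $|f|^{\kappa}$ is an admissible test function there, and that the Hölder exponents stay in $[0,1]$ across the range. A two-parameter scaling $f\mapsto Af(\mu\,\cdot)$ confirms that $\lambda=\frac{3(p-1)}{7p-6}$ is the only value for which such a matching can exist, so there is no freedom to get wrong.
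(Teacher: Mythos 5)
Your proposal is correct and follows essentially the same route as the paper's proof: you invoke Lemma \ref{weigthed_sobolev} with exactly the same parameters ($s=\frac{16p-12}{13p-10}$, $t=\frac{16p-12}{7p-6}$, gradient weight $r^{\frac{11p-10}{13p-10}}$), and your substitution $g=|f|^{\kappa}$ with $\kappa=\frac{7p-6}{4p-3}$ is precisely the paper's substitution of $|f|^{4/t}$ since $\kappa=4/t$. The only difference is organizational — you merge the paper's two successive two-exponent H\"older applications (first producing the intermediate inequality \eqref{I2}, then splitting $\int|\grad f|^q r^{\beta}$) into a single three-exponent H\"older step, and all your exponent identities check out.
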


\begin{proof}
\emph{Step 1}: It is enough to prove that
\begin{equation}\label{I2}
\left(\int_{\H} |f|^4r\, d(r,z)\right)^{\frac14} \lesssim \left(\int_{\H} |f|^2r\, d(r,z)\right)^{\frac\lambda2} \left(\int_{\H} |\grad f|^q r^{\beta}\, d(r,z)\right)^{\frac{1-\lambda}q},
\end{equation}
where $\beta = \frac{5p-4}{7p-4}$ and $q=\frac{16p-12}{7p-4}$.

Indeed, the statement in \eqref{I1} immediately follows from \eqref{I2} and H\"older's inquality. Let $a$ and $b$ be H\"older dual exponents given by $a = \frac{4(1-\lambda)}q = \frac{7p-4}{7p-6}$ and $b = \frac{4(1-\lambda)}{4(1-\lambda)-q} = \frac{7p-4}2$. We write and estimate
\begin{align*}
\int_{\H} |\grad f|^q r^{\beta}\, d(r,z) &  = \int_{\H} \left(|\grad f|^2 r\right)^\frac{q}{4-4\lambda} |\grad f|^{\frac{1-2\lambda}{2-2\lambda}q} r^{\beta - \frac{q}{4-4\lambda}}\, d(r,z)\\
&\le \left(\int_{\H}  \left(|\grad f|^2 r \right)^\frac{qa}{4-4\lambda}\, d(r,z)\right)^{\frac1a}\left(\int_{\H}  |\grad f|^{\frac{1-2\lambda}{2-2\lambda}qb} r^{\beta b- \frac{qb}{4-4\lambda}}\, d(r,z)\right)^{\frac1b}\\
& = \left(\int_{\H}  |\grad f|^2 r \, d(r,z)\right)^{\frac1a}\left(\int_{\H}  |\grad f|^p r^{1-p}\, d(r,z)\right)^{\frac1b}.
\end{align*} 
Now, plugging the resulting estimate into \eqref{I2} yields \eqref{I1}.

\emph{Step 2.} The interpolation inequality \eqref{I2} follows from the weighted Sobolev inequality from Lemma \ref{weigthed_sobolev} in the formulation
\begin{equation}\label{I3}
\left(\int_{\H} |f|^t r\, d(r,z) \right)^{\frac1t} \lesssim \left(\int_{\H} |\grad f|^sr^{\alpha} \, d(r,z)\right)^{\frac1s},
\end{equation}
where $t = \frac{16p-12}{7p-6}$, $s = \frac{16p-12}{13p-10}$ and $\alpha = \frac{11p-10}{13p-10}$ via a Ladyshenskaya-type argument. Notice that $t$, $s$ and $\alpha$ satisfy the dimensional condition
\[
t = \frac{3s}{2-s+\alpha}.
\]

Indeed, substituting  $|f|^\frac{4}t$ for $f$ in \eqref{I3} implies that
\[
\left(\int_{\H} |f|^4 r\, d(r,z)\right)^{\frac1t} \lesssim \left(\int_{\H} |f|^{(\frac4t-1)s }|\grad f|^s r^{\alpha}\, d(r,z)\right)^{\frac1s}.
\]
We now use H\"older's inequality with dual exponents $a = \frac{13p-10}{6p-6}$ and $b = \frac{13p-10}{7p-4}$ and get, since $r^{\alpha} = r^{\frac1a}  r^{\alpha -1 +\frac1b}$, that
\begin{align*}
\int_{\H} |f|^{(\frac4t-1)s }|\grad f|^s r^{\alpha}\, d(r,z) & \le\left( \int_{\H} |f|^{(\frac4t-1)sa } r\, d(r,z)\right)^{\frac1a} \left(\int_{\H} |\grad f|^{sb} r^{(\alpha-1+\frac1b)b}\, d(r,z)\right)^{\frac1b}\\
& = \left( \int_{\H} |f|^{2 } r\, d(r,z)\right)^{\frac1a} \left(\int_{\H} |\grad f|^{q} r^{\beta}\, d(r,z)\right)^{\frac1b}.
\end{align*}
Combining the previous two estimates, it is straightforward to deduce \eqref{I2}.
This completes the proof.
\end{proof}

\section*{Acknowledgement} 
CN thanks Stefano Spirito for pointing out references. CS acknowledges discussions with Helena Nussenzveig Lopes. This work is partially funded by the Deutsche Forschungsgemeinschaft (DFG, German Research Foundation) under Germany's Excellence Strategy EXC 2044 --390685587, Mathematics M\"unster: Dynamics--Geometry--Structure.

\bibliography{euler}

\begin{thebibliography}{10}

\bibitem{Abe18}
{\sc Abe, K.}
\newblock Vanishing viscosity limits for axisymmetric flows with boundary.
\newblock Preprint arXiv:1806.04811, 2018.

\bibitem{AbidiHmidiKeraani10}
{\sc Abidi, H., Hmidi, T., and Keraani, S.}
\newblock On the global well-posedness for the axisymmetric {E}uler equations.
\newblock {\em Math. Ann. 347}, 1 (2010), 15--41.

\bibitem{Ambrosio04}
{\sc Ambrosio, L.}
\newblock Transport equation and {C}auchy problem for {$BV$} vector fields.
\newblock {\em Invent. Math. 158}, 2 (2004), 227--260.

\bibitem{AmbrosioCrippa14}
{\sc Ambrosio, L., and Crippa, G.}
\newblock Continuity equations and {ODE} flows with non-smooth velocity.
\newblock {\em Proc. Roy. Soc. Edinburgh Sect. A 144}, 6 (2014), 1191--1244.

\bibitem{BerselliChiodaroli18}
{\sc Berselli, L., and Chiodaroli, E.}
\newblock {On the energy equality for the 3D Navier-Stokes equations}.
\newblock Preprint arXiv:1807.02667, 2018.

\bibitem{BohunBouchutCrippa16}
{\sc Bohun, A., Bouchut, F., and Crippa, G.}
\newblock Lagrangian solutions to the 2{D} {E}uler system with {$L^1$}
  vorticity and infinite energy.
\newblock {\em Nonlinear Anal. 132\/} (2016), 160--172.

\bibitem{Buckmaster15}
{\sc Buckmaster, T.}
\newblock Onsager's conjecture almost everywhere in time.
\newblock {\em Comm. Math. Phys. 333}, 3 (2015), 1175--1198.

\bibitem{BuckmasterDeLellisIsettSzekelyhidi15}
{\sc Buckmaster, T., {De Lellis}, C., Isett, P., and Sz{\'e}kelyhidi, Jr., L.}
\newblock Anomalous dissipation for {$1/5$}-{H}{\"o}lder {E}uler flows.
\newblock {\em Ann. of Math. (2) 182}, 1 (2015), 127--172.

\bibitem{BuckmasterDeLellisSzekelyhidi16}
{\sc Buckmaster, T., {De Lellis}, C., and Sz{\'e}kelyhidi, Jr., L.}
\newblock Dissipative {E}uler flows with {O}nsager-critical spatial regularity.
\newblock {\em Comm. Pure Appl. Math. 69}, 9 (2016), 1613--1670.

\bibitem{BuckmasterVicol17}
{\sc Buckmaster, T., and Vicol, V.}
\newblock Nonuniqueness of weak solutions to the navier-stokes equation.
\newblock Preprint arXiv:1709.10033, 2017.

\bibitem{ChaeImanuvilov98}
{\sc Chae, D., and Imanuvilov, O.~Y.}
\newblock Existence of axisymmetric weak solutions of the {$3$}-{D} {E}uler
  equations for near-vortex-sheet initial data.
\newblock {\em Electron. J. Differential Equations\/} (1998), No. 26, 17.

\bibitem{ChaeKim97}
{\sc Chae, D., and Kim, N.}
\newblock Axisymmetric weak solutions of the {$3$}-{D} {E}uler equations for
  incompressible fluid flows.
\newblock {\em Nonlinear Anal. 29}, 12 (1997), 1393--1404.

\bibitem{CheskidovConstantinFriedlanderShvydkoy08}
{\sc Cheskidov, A., Constantin, P., Friedlander, S., and Shvydkoy, R.}
\newblock Energy conservation and {O}nsager's conjecture for the {E}uler
  equations.
\newblock {\em Nonlinearity 21}, 6 (2008), 1233--1252.

\bibitem{CheskidovFilhoLopesShvydkoy16}
{\sc Cheskidov, A., Filho, M. C.~L., Lopes, H. J.~N., and Shvydkoy, R.}
\newblock Energy conservation in two-dimensional incompressible ideal fluids.
\newblock {\em Comm. Math. Phys. 348}, 1 (2016), 129--143.

\bibitem{CheskidovLuo18}
{\sc Cheskidov, A., and Luo, X.}
\newblock Energy equality for the {Navier-Stokes} equations in weak-in-time
  {O}nsager spaces.
\newblock Preprint arXiv:1802.05785, 2018.

\bibitem{ConstantinETiti94}
{\sc Constantin, P., E, W., and Titi, E.~S.}
\newblock Onsager's conjecture on the energy conservation for solutions of
  {E}uler's equation.
\newblock {\em Comm. Math. Phys. 165}, 1 (1994), 207--209.

\bibitem{CrippaNobiliSeisSpirito17}
{\sc Crippa, G., Nobili, C., Seis, C., and Spirito, S.}
\newblock Eulerian and {L}agrangian solutions to the continuity and {E}uler
  equations with {$L^1$} vorticity.
\newblock {\em SIAM J. Math. Anal. 49}, 5 (2017), 3973--3998.

\bibitem{CrippaSpirito15}
{\sc Crippa, G., and Spirito, S.}
\newblock Renormalized solutions of the 2{D} {E}uler equations.
\newblock {\em Comm. Math. Phys. 339}, 1 (2015), 191--198.

\bibitem{Danchin07}
{\sc Danchin, R.}
\newblock Axisymmetric incompressible flows with bounded vorticity.
\newblock {\em Uspekhi Mat. Nauk 62}, 3(375) (2007), 73--94.

\bibitem{DeLellisSzekelyhidi09}
{\sc {De Lellis}, C., and Sz{\'e}kelyhidi, J.~L.}
\newblock {The {E}uler equations as a differential inclusion}.
\newblock {\em Ann. of Math. (2) 170}, 3 (2009), 1417--1436.

\bibitem{DeLellisSzekelyhidi13}
{\sc {De Lellis}, C., and Sz{\'e}kelyhidi, Jr., L.}
\newblock Dissipative continuous {E}uler flows.
\newblock {\em Invent. Math. 193}, 2 (2013), 377--407.

\bibitem{Delort91}
{\sc Delort, J.-M.}
\newblock Existence de nappes de tourbillon en dimension deux.
\newblock {\em J. Amer. Math. Soc. 4}, 3 (1991), 553--586.

\bibitem{Delort92}
{\sc Delort, J.-M.}
\newblock Une remarque sur le probl{\`e}me des nappes de tourbillon
  axisym{\'e}triques sur {${\bf R}^3$}.
\newblock {\em J. Funct. Anal. 108}, 2 (1992), 274--295.

\bibitem{DiPernaLions89}
{\sc DiPerna, R.~J., and Lions, P.-L.}
\newblock Ordinary differential equations, transport theory and {S}obolev
  spaces.
\newblock {\em Invent. Math. 98}, 3 (1989), 511--547.

\bibitem{Eyink94}
{\sc Eyink, G.~L.}
\newblock Energy dissipation without viscosity in ideal hydrodynamics. {I}.
  {F}ourier analysis and local energy transfer.
\newblock {\em Phys. D 78}, 3-4 (1994), 222--240.

\bibitem{FengSverak15}
{\sc Feng, H., and {\v Sver{\'a}k}, V.~r.}
\newblock On the {C}auchy problem for axi-symmetric vortex rings.
\newblock {\em Arch. Ration. Mech. Anal. 215}, 1 (2015), 89--123.

\bibitem{Figalli08}
{\sc Figalli, A.}
\newblock Existence and uniqueness of martingale solutions for {SDE}s with
  rough or degenerate coefficients.
\newblock {\em J. Funct. Anal. 254}, 1 (2008), 109--153.

\bibitem{GallaySverak15}
{\sc Gallay, T., and {\v S}ver{\'a}k, V.}
\newblock Remarks on the cauchy problem for the axisymmetric navier-stokes
  equations.
\newblock {\em Confluentes Mathematici 7}, 2 (2015), 67--92.

\bibitem{HmidiZerguine09}
{\sc Hmidi, T., and Zerguine, M.}
\newblock Inviscid limit for axisymmetric {N}avier-{S}tokes system.
\newblock {\em Differential Integral Equations 22}, 11-12 (2009), 1223--1246.

\bibitem{Isett18}
{\sc Isett, P.}
\newblock A proof of {O}nsager's conjecture.
\newblock {\em Ann. of Math. (2) 188}, 3 (2018), 871--963.

\bibitem{JiuWuYang15}
{\sc Jiu, Q., Wu, J., and Yang, W.}
\newblock Viscous approximation and weak solutions of the 3{D} axisymmetric
  {E}uler equations.
\newblock {\em Math. Methods Appl. Sci. 38}, 3 (2015), 548--558.

\bibitem{JiuXin06}
{\sc Jiu, Q., and Xin, Z.}
\newblock On strong convergence to 3-{D} axisymmetric vortex sheets.
\newblock {\em J. Differential Equations 223}, 1 (2006), 33--50.

\bibitem{JiuXin04}
{\sc Jiu, Q.~S., and Xin, Z.~P.}
\newblock Viscous approximations and decay rate of maximal vorticity function
  for 3-{D} axisymmetric {E}uler equations.
\newblock {\em Acta Math. Sin. (Engl. Ser.) 20}, 3 (2004), 385--404.

\bibitem{Koch99}
{\sc Koch, H.}
\newblock {\em {Non-{E}uclidean singular integrals and the porous medium
  equation}}.
\newblock Habilitation thesis, Universit{\"a}t Heidelberg, Germany, 1999.

\bibitem{LeBrisLions04}
{\sc {Le Bris}, C., and Lions, P.-L.}
\newblock Renormalized solutions of some transport equations with partially
  {$W^{1,1}$} velocities and applications.
\newblock {\em Ann. Mat. Pura Appl. (4) 183}, 1 (2004), 97--130.

\bibitem{Leray34}
{\sc Leray, J.}
\newblock Sur le mouvement d'un liquide visqueux emplissant l'espace.
\newblock {\em Acta Math. 63}, 1 (1934), 193--248.

\bibitem{FilhoMazzucatoNussenzveig06}
{\sc {Lopes Filho}, M.~C., Mazzucato, A.~L., and {Nussenzveig Lopes}, H.~J.}
\newblock {Weak solutions, renormalized solutions and enstrophy defects in 2{D}
  turbulence}.
\newblock {\em Arch. Ration. Mech. Anal. 179}, 3 (2006), 353--387.

\bibitem{Onsager49}
{\sc Onsager, L.}
\newblock Statistical hydrodynamics.
\newblock {\em Nuovo Cimento (9) 6}, Supplemento, 2 (Convegno Internazionale di
  Meccanica Statistica) (1949), 279--287.

\bibitem{SaintRaymond94}
{\sc {Saint Raymond}, X.}
\newblock Remarks on axisymmetric solutions of the incompressible {E}uler
  system.
\newblock {\em Comm. Partial Differential Equations 19}, 1-2 (1994), 321--334.

\bibitem{Seis17}
{\sc Seis, C.}
\newblock A quantitative theory for the continuity equation.
\newblock {\em Ann. Inst. H. Poincar{\'e} Anal. Non Lin{\'e}aire 34}, 7 (2017),
  1837--1850.

\bibitem{Seis18}
{\sc Seis, C.}
\newblock Optimal stability estimates for continuity equations.
\newblock {\em Proc. Roy. Soc. Edinburgh Sect. A 148}, 6 (2018), 1279--1296.

\bibitem{Serrin63}
{\sc Serrin, J.}
\newblock The initial value problem for the {N}avier-{S}tokes equations.
\newblock In {\em Nonlinear {P}roblems ({P}roc. {S}ympos., {M}adison, {W}is.,
  1962)}. Univ. of Wisconsin Press, Madison, Wis., 1963, pp.~69--98.

\bibitem{Shinbrot74}
{\sc Shinbrot, M.}
\newblock The energy equation for the {N}avier-{S}tokes system.
\newblock {\em SIAM J. Math. Anal. 5\/} (1974), 948--954.

\bibitem{Stein}
{\sc Stein, E.~M.}
\newblock {\em {Singular integrals and differentiability properties of
  functions}}.
\newblock {Princeton Mathematical Series, No. 30}. Princeton University Press,
  Princeton, N.J., 1970.

\bibitem{UkhovskiiIudovich68}
{\sc Ukhovskii, M.~R., and Iudovich, V.~I.}
\newblock Axially symmetric flows of ideal and viscous fluids filling the whole
  space.
\newblock {\em J. Appl. Math. Mech. 32\/} (1968), 52--61.

\bibitem{VecchiWu93}
{\sc Vecchi, I., and Wu, S.~J.}
\newblock On {$L^1$}-vorticity for {$2$}-{D} incompressible flow.
\newblock {\em Manuscripta Math. 78}, 4 (1993), 403--412.

\bibitem{Wu10}
{\sc Wu, G.}
\newblock Inviscid limit for axisymmetric flows without swirl in a critical
  {B}esov space.
\newblock {\em Z. Angew. Math. Phys. 61}, 1 (2010), 63--72.

\end{thebibliography}
\bibliographystyle{acm}

\end{document}